\documentclass[12pt,a4paper]{article}

\usepackage[utf8]{inputenc}
\usepackage[english]{babel}
\usepackage{mathtools} 
\usepackage{amsfonts} 
\usepackage{amsmath}
\usepackage{amssymb}
\usepackage{amsthm}
\usepackage{mathrsfs}
\usepackage{microtype}
\usepackage{marvosym}
\usepackage{wasysym} 
\usepackage{stmaryrd}
\usepackage{dsfont}
\usepackage{enumerate}
\usepackage{multicol,multirow}
\usepackage{graphicx}
\usepackage{xcolor}
\usepackage{hyperref}
\hypersetup{
	colorlinks=false,
	linkcolor=blue,
	urlcolor=red,
	linktoc=all}

\newcommand{\du}{\,\textrm{d}u}

\newcommand{\Hilb}{\mathcal{H}}

\newcommand{\eps}{\text{\Large{$\varepsilon$}}}
\newcommand{\oneop}{\mathds{1}}

\newcommand{\rhobar}{\overline\rho}
\newcommand{\sigmabar}{\overline\sigma}



\newcommand{\Mob}{\mathsf{M\ddot ob}}

\newcommand{\Sone}[1][]{\mathbb{S}^{1#1}}


\newcommand{\C}{\mathcal{C}}
\newcommand{\Z}{\mathcal{Z}}
\renewcommand{\O}{\mathcal{O}}
\newcommand{\cF}{\mathcal{F}}

\newcommand{\cU}{\mathcal{U}}

\newcommand{\K}{\mathcal{K}}


\newcommand{\D}{\mathcal{D}}
\newcommand{\cP}{\mathcal{P}}

\newcommand{\cS}{\mathcal{S}}

\newcommand{\A}{\mathcal{A}}
\newcommand{\B}{\mathcal{B}}
\newcommand{\Aloc}{\A_{\loc}}
\newcommand{\Aholo}{\A_{\holo}}
\newcommand{\cI}{\mathcal{I}}

\newcommand{\M}{\mathcal{M}}

\newcommand{\N}{\mathcal{N}}
\newcommand{\cR}{\mathcal{R}}

\newcommand{\RR}{\mathbb{R}}

\newcommand{\CC}{\mathbb{C}}

\newcommand{\NN}{\mathbb{N}}

\DeclareMathOperator{\End}{End}
\DeclareMathOperator{\Ind}{Ind}
\DeclareMathOperator{\loc}{loc}
\DeclareMathOperator{\holo}{holo}

\DeclareMathOperator{\Hom}{Hom}

\DeclareMathOperator{\DHR}{DHR}

\DeclareMathOperator{\catVec}{Vec}

\DeclareMathOperator{\vN}{vN}
\DeclareMathOperator{\Ad}{Ad}

\DeclareMathOperator{\id}{id}
\DeclareMathOperator{\Tr}{Tr}

\DeclareMathOperator{\Inn}{Inn}

\newcommand{\longdownmapsto}{\rotatebox[origin=c]{-90}{$\longmapsto$}\mkern2mu}


\def\III{{I\!I\!I}}


\newcommand{\op}{\mathrm{op}}

\newcommand*\longhookrightarrow{%
    \ensuremath{\lhook\joinrel\relbar\joinrel\rightarrow}
}

\usepackage{xspace}

\makeatletter
\DeclareRobustCommand{\etc}{%
    \@ifnextchar{.}%
        {etc}%
        {etc.\@\xspace}%
}
\makeatother

\newcommand{\Cstar}{$C^\ast$\@\xspace}

\newcommand{\nsubset}{\not\subset}

\def\u1net{{\A_\RR}}



\theoremstyle{plain}
\newtheorem{theorem}{Theorem}[section]
\newtheorem{corollary}[theorem]{Corollary}
\newtheorem{lemma}[theorem]{Lemma}
\newtheorem{proposition}[theorem]{Proposition}

\theoremstyle{definition}
\newtheorem{definition}[theorem]{Definition}

\theoremstyle{remark}
\newtheorem{example}[theorem]{Example}
\newtheorem{remark}[theorem]{Remark}

\linespread{1.05}



\begin{document}

\title{Braided categories of endomorphisms as invariants for local quantum field theories}

\author{Luca Giorgetti, Karl-Henning Rehren \\ \small Institute for
Theoretical Physics \\ \small Georg-August-Universit\"at G\"ottingen
\\ \small {\tt giorgetti,rehren@theorie.physik.uni-goettingen.de}}

\maketitle

\begin{abstract}
We want to establish the ``braided action'' (defined in the paper) of
the DHR category on a universal environment algebra as a complete
invariant for completely rational chiral conformal quantum field
theories. The environment algebra can either be a single local
algebra, or the quasilocal algebra, both of which are
model-independent up to isomorphism. The DHR category as an abstract
structure is captured by finitely many data (superselection sectors,
fusion, and braiding), whereas its braided action encodes the full
dynamical information that distinguishes models with isomorphic DHR
categories. We show some geometric properties of
the \lq\lq duality pairing" between local algebras and the DHR
category which are valid in general (completely rational) chiral
CFTs. Under some additional assumptions whose status remains to be
settled, the braided action of its DHR category completely
classifies a (prime) CFT. The approach does not refer to the 
vacuum representation, or the knowledge of the vacuum state.
\end{abstract}

\section{Introduction}

In most approaches to quantum field theory (QFT) one
starts from a kinematical algebra (e.g., the equal-time canonical
commutation relations) and constructs the dynamics along with the
ground state (the vacuum). This state is represented, e.g., by the
path integral (after analytic continuation), which is notoriously
difficult to construct. It is well known that renormalization requires
a change of the original algebra along the way with the
construction. Once this is achieved, one extracts the (time-ordered)
correlation functions and scattering amplitudes.

In a recent approach based on the operator product expansion (OPE),
Holland and Hollands \cite{HoHo15} construct only the full interacting
quantum field algebra, whose coefficient functions turn out to be much
more regular at short distance than the vacuum correlation
functions. The construction 
of the algebra is in this approach \emph{well separated} from the
dynamical intricacies of the vacuum state, which must be constructed
in a second step.

This is very much in the spirit of the algebraic approach to quantum
field theory (AQFT) \cite{HaagBook}, which emphasizes the primacy of
the algebra of observables along with its local structure (its
subalgebras $\A(\O)$ of observables localized in spacetime regions
$\O$), and studies its many different representations of physical
interest. Among them, there is the vacuum representation,
distinguished by the existence of an invariant vacuum state
$\Omega$. The extraordinary features of this state are reflected in
the Bisognano-Wichmann property \cite{BiWi75}, \cite{BGL93},
\cite{Mun01} which asserts that its restriction to the algebra $\A(W)$
of observables in a wedge region $W$ is a KMS state for the boosts
subgroup preserving that wedge. This not only predicts remarkable
\lq\lq thermal features" of the well-known vacuum fluctuations,
including the Unruh effect \cite{Sew80}, \cite{BuVe14}, it also
allows to \emph{construct} the boost generator and the CPT operator
from just the data $(\A(W), \Omega)$, i.e., a single von Neumann
algebra and a state. Since the CPT operator differs from the
asymptotic free CPT operator by the scattering matrix \cite{Jos65}, it
carries most of the dynamical content of the QFT. 

The enormous amount of dynamical information encoded in the
quantum vacuum state is also witnessed by the following facts, which
may \lq\lq explain" why the construction of this state 
is bound to be so difficult. 

Borchers \cite{Bor92} has shown that a full (1+1)-dimensional QFT can
be constructed from a single algebra $\A(W)$, the vacuum state $\Omega$, and a
unitary positive-energy representation $U$ of the translations
subgroup, such that $U(x)\Omega=\Omega$ and $U(x)\A(W)U(x)^* \subset
\A(W)$ for $x\in W$. Using a pair of algebras and the vacuum state,
even the translations can be constructed \cite{Wie93-1}. This idea has
been extended to 3+1 dimensions in different ways, by Buchholz and
Summers \cite{BuSu93}, and by K\"ahler and Wiesbrock \cite{KaeWi01},
and to chiral conformal QFT by Guido, Longo and Wiesbrock \cite{GLW98}.

All these facts are instances of \emph{modular theory}, which
captures subtle functional analytic properties of faithful normal
states of von Neumann algebras. This theory is essentially trivial for
commutative algebras, and therefore none of these results has a
classical analogue.

In a nut-shell, all local algebras $\A(\O)$ of observables along with
the covariance, and hence the entire QFT, can be constructed out of one
or two given von Neumann algebras and the vacuum state.  

As an attempt to ``by-pass'' the difficult construction of the vacuum state,
we want to address the question, how far one can get \emph{without} 
knowledge of it, just given \lq\lq one or
two local von Neumann algebras", and which possibly more accessible
structure might be apt to substitute it?

Our input shall be the \emph{DHR category} \cite{DHR71} of the QFT
to be (re-) constructed, that controls the composition (\lq\lq fusion") and
permutation (\lq\lq braiding") of its positive energy
representations in terms of a \emph{unitary
braided tensor category} (UBTC) \footnote{It is actually even a \Cstar braided
tensor category, but the \Cstar property is automatic for 
rational UBTCs that we are going to deal with, see \cite[Lem.\ 3.2]{LoRo97}, \cite[Prop.\ 2.1]{Mue00}.}. 

In low dimensions, the DHR category may be regarded as a ``dual
substitute'' for global symmetries \cite{DoRo89}, \cite{DoRo90}, 
hence it encodes important but certainly not
complete information about the model. We shall see that its
\emph{braided action} on a model independent algebra, formulated in
Section \ref{sec:braction} as an
invariant for local nets,  
encodes more specific dynamical information. 

As abstract structures, UBTCs are quite easily accessible,
especially when they have only finitely many inequivalent irreducible
objects and finite-dimensional intertwiner spaces
(\emph{rational} QFT). In this case it suffices to know the 
fusion rules of the irreducible objects (superselection sectors), and
solve a finite number of algebraic relations to fix the admissible tensor structures and
braidings. E.g., the well-known fusion rules of the chiral Ising model
admit eight solutions, hence eight inequivalent UBTCs.

We want to explore to which extent the DHR category allows to
reconstruct the underlying QFT. The answer cannot be unique
because two QFTs may easily share the same DHR category
up to equivalence. E.g., by tensoring a QFT with
another one which has no nontrivial sectors (\lq\lq holomorphic CFT",
in the context of chiral conformal QFT) does not change its DHR
category. By invoking its braided action, however, the
  distinction is revealed, see
Section \ref{sec:primeCFTline}, and we offer a sufficient criterion to
exclude the presence of holomorphic factors. This criterion seems to
be the right one to grasp the information about localization
(left/right separation) of charges, hence \emph{dually} of
observables, out of the $\DHR$ braiding, in the sense of Proposition
\ref{prop:totalorderapts}. It is also a good candidate to be a necessary 
condition, in view of Proposition \ref{prop:holotensorsplit}.

We shall restrict ourselves to \emph{chiral conformal} QFTs, because
in this case \emph{complete rationality} \cite{KLM01} implies
\emph{non-degeneracy} of the $\DHR$ braiding, i.e., the $\DHR$ category has 
the abstract structure of a \emph{unitary modular tensor category} 
(UMTC). For our purpose, this means that 
the braiding of $\DHR$ endomorphisms encodes a sharp distinction between  
left and right. 
Our basic idea is to start with either the \emph{global} \Cstar-algebra 
$\A$ of quasilocal observables, or a single \emph{local} von Neumann 
algebra $\A(I_0)$ where $I_0$ is an arbitrarily fixed bounded interval 
of the line $\RR$ (or equivalently of the circle $\Sone$). 
The local picture is technically advantageous, but not essential, see Sections
\ref{sec:dualityrelations} and \ref{sec:localdualityrelations}. Indeed
neither $\A$, nor $\A(I_0)$, carry any specific information about the
models, by well-known results of \cite{Haa87}, \cite{Tak70},
and thus serve as a universal environment (``blanc
canvas'') to let the DHR category act on. 
 
Either locally or globally, relative commutants have a 
geometric interpretation both on half-intervals (strong additivity)
or half-lines (relative essential duality), see Proposition
\ref{prop:relessduality}. Also the structure of the two-interval
subfactor can be extended verbatim to a unital \Cstar-inclusion of
algebras in the real line picture, see Corollary
\ref{cor:reallinetwointerval}. Moreover the \emph{action} of the
$\DHR$ category on the observables behaves similarly locally or
globally: compare modularity with Proposition \ref{prop:dualofsergio},
and the duality relations between observables and endomorphisms
localizable in half-lines (Proposition \ref{prop:dualofwedge}) or
intervals (Proposition \ref{prop:dualoflocalcat}), either on $\RR$ or
confined in some fixed interval $I_0$. The latter proposition gives
also an affirmative answer (in the chiral conformal setting) to a conjecture
of S. Doplicher \cite{Dop82} (in (3+1)-dimensional theories), see Remark
\ref{rmk:dop82conj}.  

Our main tool to reconstruct the \emph{local substructure} of
the net are \emph{abstract points} of the braided action of the $\DHR$
category, see Section \ref{sec:apts}. 
The crucial observation is that the $\DHR$ category possesses, by its
very definition based on the underlying local structure, a
characteristic property: its braiding trivializes $\eps_{\rho,\sigma}
= \oneop$ whenever $\rho, \sigma$ 
are localizable in mutually left/right separated regions of the real
line. Since points are responsible
for left/right splittings of the line, this motivates our definition of abstract points as suitable
pairs of subalgebras that trivialize the braiding.

Using algebraic deformation techniques, 
abstract points can be carried wildly
far-away from the naive geometric picture of two half-interval
algebras, see Section \ref{sec:fuzzyapts}. We therefore need
to understand what is required to identify abstract point as geometric
points, up to unitary equivalence.
In Section
\ref{sec:Dedekind} we show a way of deriving the \emph{completeness}
of the braided action as an invariant for local nets, but on a
subclass of completely rational conformal nets which we call
\emph{prime conformal nets}, see Definition \ref{def:primeconfnet}. 
Primality of a conformal net rules out 
holomorphic and tensor products cases, and relies on the notion of
\emph{prime UMTC} due to \cite{Mue03}. In order to state the
classification result we actually need two further assumptions, see
Section \ref{sec:Dedekind}, hence the content of Proposition
\ref{prop:ptsandapts} is still an abstract recipe, as we do not know
which examples fit into the classification. Yet the recipe is quite
surprising and natural, in the sense that 
it is essentially based on two facts about completely rational nets:
the structure of the two-interval subfactor (\cite[Thm.\ 33]{KLM01})
and of the fixed points of the local $\DHR$ subcategories (Proposition
\ref{prop:dualoflocalcat}). 

In principle our techniques apply to general rational BTCs, in
particular to UMTCs, thanks
to realization results of \cite{HaYa00} by means of
endomorphisms. Hence solving the previous trivialization constraints
$\eps_{\rho,\sigma} = \oneop$ and then applying our machinery, can be
viewed as a possible way to \emph{realize} abstract UMTCs by means of
suitable, e.g., prime (see Definition \ref{def:primeconfnet}),
conformal nets via the $\DHR$ construction. We do not discuss this
\lq\lq exoticity" problem for abstract UMTCs in this work, and we
refer to \cite{Kaw15} for more explanations, and to \cite{Bis15} for a
systematic positive answer on the realization of Drinfeld doubles of
subfactors with index less than 4.

\section{Conformal nets and points on the line}\label{sec:CFTsandPtsonRR}

The purpose of this section is to collect structure
properties of QFT models that shall be used for the reconstruction of
local algebras from an action of the DHR category in later
sections. Although these results are well known (except Proposition
\ref{prop:relessduality}), it is worthwhile to exhibit them in due context.

In this work we deal with chiral conformal field theories
(\emph{chiral CFTs}) \lq\lq in one spacetime dimension", referring to
either of the two light-like coordinates $x^0\pm x^1$ in two
dimensions. By conformal covariance one can equivalently
consider theories on the real line $\RR$, or on the unit circle
$\Sone$. The latter can be regarded as a \lq\lq conformal closure" of
the line $\Sone \cong \overline \RR = \RR \cup \{\infty\}$ and the
points of the two sets can be put in bijective correspondence via the
Cayley map $x\in\RR \mapsto (x + i)(x - i)^{-1} \in
\Sone\smallsetminus \{1\}$.

Chiral CFTs are effectively described in the algebraic setting of
AQFT \cite{HaagBook}. An abundance of models of the
field-theoretic literature has been reformulated in this unifying
framework, giving access to model-independent insight and structure
analysis \cite{Reh15}.

In the following we adopt the \emph{real line picture} as more natural
for our purposes, in particular from a representation theoretical
point of view, cf.\ \cite{KLM01}. We describe chiral CFTs by means of
\emph{local conformal nets on the line} in the following sense, cf.\
\cite{FrJoe96}. Instead of \emph{points} of $\RR$ we have
\emph{bounded} intervals $I\subset \RR$, instead of local fields we
have \emph{local algebras} $\A(I)$. More precisely, let $\cI$ be the
family of non-empty open bounded intervals $I\subset \RR$ and notice
that $\cI$ is partially ordered by inclusion and directed. Consider a
complex separable Hilbert space $\Hilb$, the \emph{vacuum space}, and
to every $I\in\cI$ assign a von Neumann algebra $\A(I)=\A(I)''$
realized on $\Hilb$. The latter correspondence forms a \emph{net} of
algebras, which we denote by $\{\A\} = \{I\in\cI\mapsto\A(I)\}$. 

\begin{definition}\label{def:CFTline}
A net of von Neumann algebras $\{\A\} = \{I\in\cI\mapsto \A(I)\}$
realized on $\Hilb$ is a \textbf{local conformal net on the line} if
it fulfills: 
\begin{itemize} \itemsep0mm
\item \emph{Isotony}: if $I,J\in\cI$ and $I\subset J$ then $\A(I)
  \subset \A(J)$.  
\item \emph{Locality}: if $I,J\in\cI$ and $I\cap J = \emptyset$ then
  $\A(I)$ and $\A(J)$ elementwise commute. 
\item \emph{M\"obius covariance}: there is a strongly continuous
  unitary representation $U$ of the \emph{M\"obius group} $\Mob =
  PSL(2,\RR) = SL(2,\RR)/\{\pm \oneop\}$ on $\Hilb$, which acts
  covariantly on the net, i.e.  
$$U(g) \A(I) U(g)^* = \A(gI)$$
whenever $I\in\cI$, $g\in\Mob$ and $gI \in \cI$, we ask nothing otherwise.
\item \emph{Positivity of the (conformal) Hamiltonian}: the generator
  $H$ of the rotations subgroup of $\Mob$ is positive. 
\item \emph{Vacuum vector}: there exists a M\"obius invariant vector
  $\Omega\in\Hilb$, unique up to scalar multiples, and cyclic for $\{
  \A(I)$, $U(g) : I\in\cI, g\in\Mob \}$. 
\end{itemize}
A local conformal net on the line (in a \emph{vacuum sector}) is then
specified by a quadruple $(\{\A\},U,\Omega,\Hilb)$. 
\end{definition}

The following notion says when two local conformal nets are \lq\lq the
same", and is particularly useful for classification purposes. 

\begin{definition}\label{def:CFTisom}
Two local conformal nets on the line (in their vacuum sector) $\{\A\}$
and $\{\B\}$, or better $(\{\A\},U_{\A},\Omega_{\A},\Hilb_{\A})$ and
$(\{\B\},U_{\B},\Omega_{\B},\Hilb_{\B})$, are \textbf{isomorphic}, or
unitarily equivalent, if there exists a unitary operator
$W:\Hilb_{\A}\rightarrow\Hilb_{\B}$ which intertwines the two
quadruples, i.e., $W\A(I)W^* = \B(I)$ for all $I\in\cI$, $WU_\A(g)W^*
= U_\B(g)$ for all $g\in\Mob$ and $W\Omega_\A = \Omega_\B$. We write
$\{\A\}\cong\{\B\}$ for isomorphic nets. 
\end{definition}

Now starting from the local algebras of a net $\{\A\}$ as above, one
can define algebras for arbitrary regions $S\subset\RR$ as
follows. Define $\A(S)$ to be the von Neumann algebra, respectively
\Cstar-algebra, generated by all local algebras $\A(I)$ such that
$I\subset S$, depending on whether $S$ is a \emph{bounded},
respectively \emph{unbounded}, region of $\RR$. In the first case
notice that $\A(S)\subset\A(J)$ for a sufficiently big $J\in\cI$, in
the second case let $\cR(S):=\A(S)''$.

In this way we get the \emph{quasilocal \Cstar-algebra} $\A:=\A(\RR)$,
the algebras of \lq\lq space-like" complements of intervals $\A(I')$
where $I' := \RR\smallsetminus \overline I$, $I\in\cI$, the half-line
(\lq\lq wedge") algebras $\A(W)$ where $W\subset\RR$ is a non-empty
open half-line, left or right oriented. 

\begin{remark}
The latter distinction between norm and weak closure is not just
technical, it is essential to understand the structure of local nets
and their $\DHR$ representation theory. Assume Haag duality on $\RR$
(see below) and consider for instance $I\Subset J$, i.e., $\overline I
\subset J$ where $I,J\in\cI$. Then $I'\cap J = I_1\cup I_2$ and
$\A(I_1 \cup I_2) = \A(I_1) \vee \A(I_2) \subset \A(I)'\cap\A(J)$ is
the two-interval subfactor considered by \cite{KLM01}, and $\vee$ is a
short-hand notation for the von Neumann algebra generated. The
previous inclusion is proper in many examples, in particular $\DHR$
charge transporters from $I_1$ to $I_2$ do not belong to $\A(I_1 \cup
I_2)$.

On the other hand, take $I' = W_1\cup W_2$, $I\in\cI$ and observe that 
$$\A(W_1\cup W_2) = C^*\{\A(W_1)\cup\A(W_2)\} \subset \cR(W_1\cup W_2) = \A(W_1) \vee \A(W_2)$$ 
is by Haag duality on $\RR$ the inclusion $\A(I') \subset \A(I)'$,
again proper in general. In this case $\DHR$ charge transporters from
$W_1$ to $W_2$ are again not in $\A(W_1\cup W_2)$ but they belong to
the weak closure $\cR(W_1\cup W_2)$. Geometrically speaking,
half-lines $W_1$ and $W_2$ \lq\lq weakly touch at infinity'' and allow
charge transportation. 
\end{remark}

Chiral Rational CFTs (\emph{chiral RCFTs}) correspond, in the
algebraic setting, to a class of local conformal nets singled out by
the following additional conditions imposed on the local algebras, see
\cite{KLM01}, \cite{Mue10draft}. Throughout this paper we will
restrict to the completely rational case whenever representation
theoretical issues are concerned. 

\begin{definition}\label{def:RCFTline}
A local conformal net on the line $\{\A\}$, as in Definition
\ref{def:CFTline}, is called \textbf{completely rational} if the
following conditions are satisfied. 
\begin{itemize} \itemsep0mm
\item [(a)] \emph{Haag duality on $\RR$}: $\A(I')' = \A(I)$ for all $I\in\cI$.
\item [(b)] \emph{Split property}: for every $I,J\in\cI$, $I\Subset J$
  there exists a type $I$ factor $\cF$ such that $\A(I) \subset \cF
  \subset \A(J)$. 
\item [(c)] \emph{Finite index two-interval subfactor}: $\A(I_1\cup
  I_2) \subset \A(I)'\cap \A(J)$ has finite Jones index, where
  $I,J\in\cI$, $I\Subset J$ and $I'\cap J = I_1\cup I_2$ for
  $I_1,I_2\in\cI$. 
\end{itemize}
With conformal covariance, see \cite{GLW98}, condition (a) is equivalent to
\begin{itemize}
\item [(a)$'$] \emph{Strong additivity}: $\A(I_1\cup I_2) = \A(I)$
  where $I\in\cI$, $p\in I$ and $\{p\}'\cap I = I \smallsetminus \{p\}
  = I_1 \cup I_2$ for $I_1,I_2\in\cI$.  
\end{itemize}
\end{definition}

\begin{remark}
Conditions (a) and (b) strengthen the locality assumption on the net,
they are natural and fulfilled in many models. Condition (c) is the
characteristic feature of \lq\lq rational" theories, i.e., those with
finitely many superselection sectors.

Notice that complete rationality, in the conformal setting, is a local condition, i.e., can be checked inside one arbitrarily fixed local algebra.
\end{remark}

By conformal covariance, local conformal nets on the line $\{\A\}$, as in Definition \ref{def:CFTline}, can be uniquely extended to local conformal nets on the circle, see \cite{Lon2} for the precise definition of the latter. This fact is well known, cf.\ \cite{FrJoe96}, \cite{LoRe04}, \cite{LoWi11}, but contains some subtleties, see \cite[Sec. 1.2, 4.1]{GioPhD} for the details. In particular, denoted by $\{\tilde\A\}$ the extension, it can be shown that the two definitions one might give of weakly closed half-line algebras are the same, namely $\tilde\A(W) = \cR(W)$, and that in the Haag dual case (assumption (a)) the extension is algebraically determined by the formula $\tilde \A(I) = \A(I')'$. The correspondence $\{\A\}\mapsto\{\tilde\A\}$ is bijective up to isomorphism of nets in the sense of Definition \ref{def:CFTisom}.

As a consequence all the known properties of chiral conformal nets hold on the line as well, see, e.g., \cite{GaFr93}, \cite{GuLo96}, \cite{GLW98}. Notably the Reeh-Schlieder theorem, the Bisognano-Wichmann property, factoriality of the local algebras, additivity and essential duality $\cR(W)' = \cR(W')$. Moreover inclusions of local algebras $\A(I)\subset \A(J)$ for $I,J\in\cI$, $I\subset J$ are known to be \emph{normal} and \emph{conormal}, i.e., respectively
\begin{equation}\label{eq:normconorm}\A(I)^{cc} = \A(I),\quad\A(I) \vee \A(I)^c = \A(J)\end{equation}
where $\N^c := \N'\cap \M$ denotes the \emph{relative commutant} of the inclusion $\N\subset\M$ of von Neumann algebras.
The normality and conormality relations above do not
depend on the specific geometric position of $I$ inside $J$, nor on
Haag duality (assumption (a)). 

With the split property (assumption (b)) both the local algebras
$\A(I)$ for all $I\in\cI$ and the quasilocal algebra $\A$ are
\emph{canonical} objects, in the sense that they are universal
(independent of the specific model) up to spatial
isomorphism. The first as the unique injective (\lq\lq hyperfinite")
type $\III_1$ factor by \cite{Haa87}, the second by a general result of
\cite{Tak70}. In particular, they contain no specific information about
the models. Moreover \emph{locality} of the net is not needed neither
in \cite{Tak70} nor to apply the result of \cite{Haa87}. In the first
only isotony enters, for the second we know that Bisognano-Wichmann's
modular covariance holds regardless of locality \cite{DLR01}. 

The entire information about the chiral CFT is then encoded in the
inclusions and relative commutation relations among different local
algebras, i.e., in the \emph{local algebraic structure} of the
net. This statement is made precise by the next proposition due to
M. Weiner \cite{Wei11}, which says that the vacuum sector of a local
conformal net is \emph{uniquely determined} by its local algebraic
structure.
 
Let $\{\N_i\subset\M,\, i\in\cI\}$ and $\{\tilde
\N_i\subset\tilde\M,\, i\in\cI\}$ be two families of subfactors,
respectively in $\B(\Hilb)$ and $\B(\tilde\Hilb)$, indexed by the same
set of indices $\cI$. They are called \textbf{isomorphic} if there
exists a unitary operator $V:\Hilb\rightarrow\tilde\Hilb$ such that
$V\M V^* = \tilde\M$ and $V\N_i V^* = \tilde\N_i$ for all $i\in\cI$.  

\begin{proposition}\emph{\cite[Thm.\ 5.1]{Wei11}}.\label{prop:michiinvariant}
Let $\{\A\}$ be a local conformal net as above fulfilling the split property (assumption (b)). Then $\{\A\}$, or better $(\{\A\},U,\Omega,\Hilb)$, is completely determined up to isomorphism of nets, see Definition \ref{def:CFTisom}, by the isomorphism class of the local subfactors $\{\A(I)\subset\A(I_0),\, I\in\cI, I\subset I_0\}$ for any arbitrarily fixed interval $I_0\in\cI$.
\end{proposition}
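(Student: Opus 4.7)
The plan is to work with two conformal nets $(\{\A\}, U_\A, \Omega_\A, \Hilb_\A)$ and $(\{\B\}, U_\B, \Omega_\B, \Hilb_\B)$ together with a unitary $V : \Hilb_\A \to \Hilb_\B$ implementing the given isomorphism of subfactor families, i.e.\ $V \A(I) V^* = \B(I)$ for all $I \in \cI$ with $I \subset I_0$, and to upgrade $V$, after at most a phase correction, to a full isomorphism of quadruples in the sense of Definition \ref{def:CFTisom}. The driving idea is that on each side the M\"obius representation and the vacuum vector are canonically determined by the algebraic structure inside $I_0$ via Bisognano-Wichmann, so that $V$ is forced to intertwine the remaining data up to a phase.

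Concretely, I would set $U'_\B := V U_\A V^*$ on $\Hilb_\B$; this is a strongly continuous positive-energy representation of $\Mob$ with invariant vector $V\Omega_\A$. For every $I \in \cI$ with $I \subset I_0$, Bisognano-Wichmann for $(\A, \Omega_\A)$ gives $\Delta^{it}_{\A(I), \Omega_\A} = U_\A(\delta_I(-2\pi t))$, where $\delta_I$ denotes the one-parameter dilation subgroup of $\Mob$ preserving $I$. Conjugation by $V$ yields $U'_\B(\delta_I(-2\pi t)) = \Delta^{it}_{\B(I), V\Omega_\A}$, so the pair $(U'_\B, V\Omega_\A)$ realizes Bisognano-Wichmann for the net $\{\B\}$. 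On the other hand, Bisognano-Wichmann for $(\B, \Omega_\B)$ identifies $U_\B(\delta_I(-2\pi t))$ with $\Delta^{it}_{\B(I), \Omega_\B}$. Since the dilations $\delta_{I_1}, \delta_{I_2}$ around two distinct intervals $I_1, I_2 \subset I_0$ already generate a dense subgroup of $\Mob$, each of $U'_\B$ and $U_\B$ is determined globally by its restrictions to such dilation subgroups.

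The crux, which I take to be the main obstacle, is to identify $(U'_\B, V\Omega_\A)$ with $(U_\B, \Omega_\B)$ up to phase. I would argue that the vacuum state on $\B(I_0)$ is uniquely characterized among faithful normal states by the requirement that, for each $I \subset I_0$, its modular group on the subfactor $\B(I)$ extend to an automorphism group of $\B(I_0)$, and that the family of such extensions generate the geometric action of a positive-energy M\"obius representation on the subfactor lattice; this is in the spirit of the Brunetti-Guido-Longo reconstruction of covariance from modular data \cite{BGL93}. Both $V\Omega_\A$ and $\Omega_\B$ induce a state with this property, hence they give the same state on $\B(I_0)$, and uniqueness of the $\Mob$-invariant cyclic vector forces them to coincide up to a phase. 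Absorbing this phase into $V$ gives $V\Omega_\A = \Omega_\B$ and $V U_\A V^* = U_\B$. Finally, the intertwining extends from $\{I : I \subset I_0\}$ to all of $\cI$ by M\"obius covariance: for arbitrary $I \in \cI$, pick $g \in \Mob$ with $gI \subset I_0$ and compute
\[
V \A(I) V^* = V U_\A(g)^* \A(gI) U_\A(g) V^* = U_\B(g)^* \B(gI) U_\B(g) = \B(I).
\]
The cleanness of the uniqueness step relies on the canonical nature of Bisognano-Wichmann combined with the split property, which is explicitly assumed in the statement and provides enough algebraic rigidity for the M\"obius representation, and hence the vacuum, to be unambiguously recovered from the subfactor data.
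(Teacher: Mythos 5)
The paper offers no proof of this proposition: it is quoted directly from Weiner \cite[Thm.\ 5.1]{Wei11}, so the comparison is really with Weiner's published argument. Your reduction is the right one and agrees with that argument in outline: set $U_\B' := VU_\A V^*$, note that everything follows once $V$ is known to carry the vacuum data of $\{\A\}$ to that of $\{\B\}$, and recover the M\"obius representation from modular groups of local algebras via Bisognano--Wichmann. The closing patches (dilations about two intervals generate $\Mob$, extension of the intertwining from $I\subset I_0$ to all of $\cI$ by covariance, absorbing a phase) are standard.

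The genuine gap is exactly the step you flag as the crux. You assert that the vacuum state on $\B(I_0)$ is the unique faithful normal state whose modular groups on the subfactors $\B(I)$ ``extend to an automorphism group of $\B(I_0)$'' and generate a geometric positive-energy M\"obius action, citing the spirit of \cite{BGL93}. As literally stated this characterization fails: for $I\subsetneq I_0$ the modular group of $(\B(I),\omega)$ implements the dilations $\delta_I$, which do not preserve $I_0$, so it does not restrict (or extend) to an automorphism group of $\B(I_0)$. More seriously, even in a corrected formulation (``there exists a positive-energy M\"obius representation with invariant cyclic vector making the net covariant and the modular action geometric''), the \emph{uniqueness} of a state with this property is not what \cite{BGL93} proves --- there the vacuum is given and $U$ is constructed from $(\{\A\},\Omega)$ --- and it is precisely the content of Weiner's theorem. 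By construction both $\omega_\B$ and $\omega_\A\circ\Ad_{V^*}$ satisfy your requirement, so concluding that they coincide presupposes the uniqueness you are trying to establish; the argument is circular at this point. A clear symptom is that the split property, the one nontrivial hypothesis in the statement and the place where Weiner's proof does its real work, is never actually invoked in your argument beyond a closing remark. A further (repairable) loose end: equality of the two states on $\B(I_0)$ only gives $V\Omega_\A = u'\Omega_\B$ with $u'$ unitary in $\B(I_0)'$, not a phase; one must still argue that the two candidate representations implement the same automorphisms of the entire net, hence differ by a character of the perfect group $\Mob$, before uniqueness of the invariant vector can be applied.
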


In other words, the isomorphism class of the collection of local algebras is a \emph{complete invariant} for split local conformal nets.

With Haag duality on $\RR$ (assumption (a)), there is a geometric interpretation of the relative commutant and of the normality and conormality relations (\ref{eq:normconorm}) for inclusions of local algebras which arise for the choice of \emph{points}. Namely let $I\in\cI$, take $p\in I$ and let $\{p\}'\cap I = I \smallsetminus \{p\} = I_1 \cup I_2$, $I_1,I_2\in\cI$. The relative commutant of $\A(I_1) \subset \A(I)$ is then given by
\begin{equation}\label{eq:geomlocrelcomm}\A(I_1)^c := \A(I_1)' \cap \A(I) = \A(I_2).\end{equation}
It follows from conformal covariance, cf.\ \cite{GLW98}, that the relations (\ref{eq:geomlocrelcomm}) are actually equivalent to assumption (a).

Now a \textbf{point of an interval}, $p\in I$, is uniquely determined
by two intervals $I_1, I_2\in\cI$ as above, the \emph{relative 
complements} of $p$ in $I$. Algebraically, $p\in
I$ splits $\A(I)$ into a pair of commuting subalgebras 
\textbf{$\A(I_1),\A(I_2)\subset \A(I)$} which in the Haag dual case
are each other's relative commutants.

Similarly a \textbf{point of the line}, $p\in\RR$, is uniquely
determined by two half-lines $W_1, W_2\subset\RR$, the relative
complements of $p$ in $\RR$, and determines two \lq\lq global" unital
\Cstar-inclusions $\A(W_1),\A(W_2)\subset \A := \A(\RR)$. Our first
main structure result, see Proposition \ref{prop:relessduality}, shows
that the same geometric interpretation of relative commutants holds in
the global case. The proof is independent of assumption (a), but as a
technical tool we need to assume (b). Merging the standard terminology
of \lq\lq relative commutant" and  \lq\lq essential duality" for local
algebras we can call this property \emph{relative essential duality}. 

\begin{proposition}\label{prop:relessduality}
Let $\{\A\}$ be a local conformal net on the line as in Definition \ref{def:CFTline}, which fulfills the split property (assumption (b)). Consider the inclusion of unital \Cstar-algebras $\A(W)\subset \A$, where $W\subset\RR$ is a half-line, left or right oriented, then
$$\A(W)^c := \A(W)'\cap \A = \A(W')$$
where $W' = \RR\smallsetminus \overline W$ is the opposite half-line.
\end{proposition}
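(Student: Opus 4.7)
The forward inclusion $\A(W') \subset \A(W)' \cap \A$ is immediate from locality and norm continuity: any element of $\A(W')$ is a norm limit of elements of $\A(I)$ with $I \subset W'$, each of which commutes with every $\A(J)$, $J \subset W$, by disjointness of $I$ and $J$; passing to norm limits yields $\A(W') \subset \A(W)'$, and $\A(W') \subset \A$ by construction.

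For the reverse inclusion $\A(W)' \cap \A \subset \A(W')$, the first move is a reduction to a weak-closure statement via essential duality. Because $\{\A\}$ extends uniquely to a M\"obius-covariant net on $\Sone$ satisfying circle Haag duality, one has on the line $\cR(W)' = \cR(W')$ with $\cR(W) := \A(W)''$. Taking double commutants, $\A(W)' = \A(W)''' = \cR(W)' = \cR(W')$, so it suffices to show $\cR(W') \cap \A \subset \A(W')$: every quasilocal element lying in the weak closure of the half-line $C^*$-algebra must already lie in the $C^*$-algebra itself.

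For this, let $x \in \cR(W') \cap \A$ and approximate $x = \lim_n x_n$ in norm with $x_n \in \A(K_n)$, $K_n = (a_n,b_n) \in \cI$ exhausting $\RR$ and eventually containing $p$ in their interior. The plan is to build, using the split property (b), normal conditional expectations $E_n$ with range in $\A(W')$ such that $E_n(x_n) \to x$ in norm; the conclusion $x \in \A(W')$ then follows from norm-closedness of $\A(W')$. Concretely, choose nested intervals $I_n \Subset J_n \subset W' \cap K_n$ with $\bigcup_n I_n$ exhausting $W'$, so that the split property applied to $I_n \Subset J_n$ yields a type I factor $\cF_n$ with $\A(I_n) \subset \cF_n \subset \A(J_n) \subset \A(W')$. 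The Doplicher--Longo product-state structure attached to the split inclusion (combined, if needed, with a second split $\A(J_n) \Subset \A(K_n)$ to enlarge the ambient algebra) produces a normal conditional expectation $E_n : \A(K_n) \to \cF_n \subset \A(W')$ that restricts to the identity on $\A(I_n)$. Then $E_n(x_n) \in \A(W')$, and the remaining task is the norm convergence $E_n(x_n) \to x$.

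The main obstacle is precisely this convergence. Since $\cR(W')$ is a type III factor, Takesaki's theorem forbids a normal conditional expectation $\B(\Hilb) \to \cR(W')$, so one cannot merely write $E_n(x) = x$ and bound $\|E_n(x_n) - E_n(x)\|$ by $\|x_n - x\|$. The proof must instead leverage (i) that $E_n$ acts as the identity on the growing subalgebras $\A(I_n)$, which weakly generate $\cR(W')$; (ii) that $x \in \cR(W')$ makes the split product states asymptotically invariant on $x$; and (iii) that $\|E_n\| = 1$ together with $\|x_n - x\| \to 0$ controls the residual error. This is also the point at which one sees why the pathology flagged in the preceding remark — DHR charge transporters populating $\cR(W_1) \vee \cR(W_2) \setminus \A(W_1 \cup W_2)$ for disjoint wedge complements — does not obstruct the argument here: $W'$ is a single connected half-line, leaving no room for charge transportation between separated components.
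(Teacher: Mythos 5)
Your forward inclusion and the reduction via essential duality to $\cR(W')\cap\A\subset\A(W')$ are both correct and coincide with the paper's first step. The reverse inclusion, however, has a genuine gap that your own text flags but does not close: the convergence $E_n(x_n)\to x$ is never established, and the mechanism you propose cannot establish it. Your $E_n$ are normal expectations onto type $I$ factors $\cF_n\subset\A(W')$, so they do \emph{not} fix $x$; the estimate $\|E_n(x_n)-x\|\le\|x_n-x\|+\|E_n(x)-x\|$ then requires $\|E_n(x)-x\|\to 0$, and membership of $x$ in the \emph{weak} closure $\cR(W')$ gives no norm control whatsoever over $E_n(x)$. Indeed, if $E_n(y)\to y$ in norm held for all $y\in\cR(W')$ one would conclude $\cR(W')=\A(W')$, which is false for a type \three algebra versus its quasilocal part; and the only elements for which it could hold are exactly those of $\cR(W')\cap\A$ that already lie in $\A(W')$ --- so the missing step is essentially equivalent to the statement being proved. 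Your items (i)--(iii) are heuristics, not an argument.

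The paper avoids this by building a single norm-one projection whose range \emph{contains} $x$, rather than a sequence of expectations onto small subalgebras of the target. Concretely: the split property makes $\cR(W)$ the injective type \threeone factor, hence its unitary group $G=\cU(\cR(W))$ is amenable; averaging $\Ad_u$ over an invariant mean on $G$ yields a (necessarily non-normal) conditional expectation $E:\B(\Hilb)\to\cR(W)'$. Since $x\in\cR(W')\cap\A\subset\cR(W)'$, one has $E(x)=x$ for free. The real work is then to show that $E$ respects the local structure, namely $E(\A(K))\subset\A(K\cap W')$ for bounded $K$; this uses locality plus either Haag duality or, in general, de la Harpe's algebraic lemma combined with additivity ($E(a)\in\A(K)\vee\cR(W)=\cR(W_1')$, hence $E(a)\in\cR(W_1'\cap W')=\A(K\cap W')$). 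Norm continuity of $E$ then gives $x=E(x)=\lim_n E(x_n)\in C^*\bigl\{\bigcup_n\A(K_n\cap W')\bigr\}=\A(W')$. If you want to salvage your approach, you would need to replace your $\cF_n$-valued expectations by an expectation onto (a subalgebra containing) $\cR(W)'$ itself and then prove the locality-preservation property --- at which point you have reproduced the paper's proof.
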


\begin{proof}
Observe first that $\A(W)'=\cR(W')$, hence the statement is equivalent
to $\A(W)=\cR(W)\cap \A$. This does not boil down to
essential duality $\cR(W)' = \cR(W')$, because typically $\A(W)\subset
\cR(W)$ is proper and $\cR(W)\nsubset\A$, see \cite[Sec. 1]{BGL93}.

By the split property we have that $\cR(W)$ is the injective factor of type $\III_1$ and the same holds for its commutant. Consider then a norm continuous conditional expectation 
$$E: \B(\Hilb) \rightarrow \cR(W)'$$
given by averaging over the adjoint action of the unitary group $G :=
\cU(\cR(W))$ of $\cR(W)$, equipped with the ultraweak topology or
equivalently with any of the other weak operator topologies.
 
Now, injectivity is equivalent to amenability of the unitary group, i.e., to the existence of a left invariant state (\lq\lq mean'') on the unital \Cstar-subalgebra $\C_{ru}(G)$ of right uniformly continuous functions in $L^\infty(G)$, see \cite{dlH79}, \cite{Pat92}. Similar to \cite{Arv74} one can define an integral $E(b) := \int_G \Ad_u(b)\du$ with respect to such a mean $m$, for every $b\in\B(\Hilb)$, as the unique element in $\B(\Hilb)$ such that
$$\langle\varphi,\int_G \Ad_u(b)\du\rangle = \int_G \langle\varphi, \Ad_u(b)\rangle \du\quad\forall \varphi\in\B(\Hilb)_*$$
where $\B(\Hilb)_*$ is the predual, and the r.h.s.\ is defined by the mean on functions
$$\int_G \langle\varphi, \Ad_u(b)\rangle \du = m(f_{\varphi,b}),\quad f_{\varphi,b}(u) := \langle\varphi, \Ad_u(b)\rangle.$$
One can easily see by formal computations that $E(b)u = uE(b)$ for all
$u\in G$ hence $E(b)\in \cR(W)'$, see also \cite[Lem.\ 1,
2]{dlH79}. Moreover, $E$ is a norm one projection \emph{onto}
$\cR(W)'$, i.e., $\|E(b)\| \leq \|b\|$ and $E(b) = b$ if
$b\in\cR(W)'$, hence a conditional expectation by
\cite{Tom57}. Observe that $E$ cannot be normal because $\cR(W)$ is
type $\III$, see \cite[Ex.\ IX.4]{Tak2}.

The next step is to show that $E$ preserves the local structure of the net, i.e., maps local algebras into local algebras and $\A$ into itself. So take a bounded interval $I$ containing the origin of $W$, we want to show that
$$E:\A(I) \rightarrow \A(I) \cap \cR(W)'.$$
First, assume in addition that Haag duality on $\RR$ holds. Take $a\in\A(I)$ and $\A(I) = \A(I')' = (\cR(W_1) \vee \cR(W_2))'$ where $I' = W_1 \cup W_2$ and $W_1,W_2$ are half-lines. If for instance $W_2 \subset W$, then every $x\in \cR(W_2)$ commutes with $E(a)\in\cR(W)'$. Take now any $y\in\cR(W_1)\subset \cR(W')$, then
$$E(a) y = \int_G \Ad_u(a) y \du = \int_G y \Ad_u(a) \du = y E(a)$$
because $uy = yu$, $u\in \cR(W)$ and $ay = ya$, $a\in\A(I)$ by
locality. Hence $E(a)$ commutes with $\cR(W_2)$ and with
  $\cR(W_1)$, and we can conclude that $E(a)\in\A(I)$.
 
In general, a more refined and purely algebraic argument \cite[Lem.\ 2 (iii)]{dlH79} shows directly that $E(a) \in \A(I) \vee \cR(W)$ which coincides with $\cR(W_1')$ by additivity, hence $E(a) \in \cR(W_1' \cap W')$ where $W_1' \cap W' = I\cap W'\in \cI$ and
$$E: \A(I) \rightarrow \A(I\cap W') = \A(I) \cap \cR(W)'.$$
Exhausting $\RR$ with a sequence of intervals $I_n$ containing the origin of $W$, by norm continuity of $E$ we get $E : \A \rightarrow \A$ and 
$$C^*\{ \bigcup_n \A(I_n\cap W') \} = E(\A) = \A(W)^c.$$
But also $C^*\{ \bigcup_n \A(I_n\cap W') \} = \A(W')$, hence $\A(W)^c = \A(W')$ follows.
\end{proof}

\begin{remark}
The techniques employed here are similar to those used in \cite[Sec. 5]{Dop82}. There, however, local algebras $\A(I)$ are considered instead of half-line algebras and one does not need additivity nor essential duality to show that conditional expectations on $\A(I)'$ preserve the local substructure of $\A$.
\end{remark}

As a consequence of Proposition \ref{prop:relessduality}, assuming the split property we can take the relative commutant of the inclusion $\A(W')\subset\A(W)^c\subset\A(W)'$ and obtain 
\begin{equation}\label{eq:globalnormalityhalfline}\A(W) = \A(W)^{cc} = \cR(W) \cap \A\end{equation}
where the relative commutants refer to the inclusions
$\A(W)\subset\A$.
 
This is similar to the case of local algebras $\A(I)\subset\A$, $I\in\cI$ if we assume Haag duality on $\RR$, indeed
\begin{equation}\label{eq:globalnormalityinterval}\A(I) = \A(I)^{cc}\end{equation}
follows by taking relative commutants of the inclusion
$\A(I')\subset\A(I)^c\subset\A(I)'$, cf.\ \cite[Sec. V]{DHR69I}. The
relations (\ref{eq:globalnormalityhalfline}) and
(\ref{eq:globalnormalityinterval}) are a global version of the
normality relations (\ref{eq:normconorm}) encountered before.
 
Heuristically speaking, we regard \emph{normality} as an algebraic fingerprint of \emph{connectedness} in the following sense. Algebras associated to intervals $\A(I)$ or half-lines $\A(W)$ are \lq\lq connected", relative commutants $\A(I)^c$ are also \lq\lq connected" in a broader sense, e.g., on the circle, because $\A(I)^c = \A(I)^{ccc}$ always holds. On the other hand, algebras $\A(S)\subset\A$ associated to disconnected regions, e.g., $S=I'$, $I\in\cI$, need not be normal. Indeed, assuming (a), the inclusion 
\begin{equation}\label{eq:globaltwointervalinclusion}\A(I') \subset \A(I')^{cc} = \A(I)^c\end{equation}
is proper in many examples, see Corollary \ref{cor:reallinetwointerval}. In the case of \emph{holomorphic} nets there is no algebraic distinction (in the sense of normality relations) between \lq\lq connected" and \lq\lq disconnected" regions at the level of nets, cf.\ \cite{ReTe13} for an explicit isomorphism between interval and two-interval algebras in the case of graded-local Fermi nets. Notice that the unital \Cstar-inclusion (\ref{eq:globaltwointervalinclusion}) is a \lq\lq global" version of the \emph{two-interval subfactor} $\A(I_1\cup I_2) \subset \A(I_1\cup I_2)^{cc} = \A(I)^c$ considered by \cite{KLM01}, where relative commutants are taken in $\A(J)$ for $I\Subset J$, $I'\cap J = I_1 \cup I_2$. Indeed $((\A(I_1)\vee\A(I_2))'\cap\A(J))'\cap\A(J) = (\A(I_1)' \cap \A(I_1 \cup I))'\cap\A(J) = \A(I)'\cap\A(J)$.

In the following we shall concentrate on \emph{local conformal nets on
  the line} $\{\A\}$, see Definition \ref{def:CFTline}, which are in
addition \emph{completely rational}, as in Definition
\ref{def:RCFTline}. In this case we know by \cite[Cor.\ 37]{KLM01} that
the category of finitely reducible \textbf{DHR representations} of
the net, denoted by $\DHR\{\A\}$, has the abstract structure of a
\emph{unitary modular tensor category} (UMTC).
Referring to \cite{DHR71}, \cite{FRS92}, \cite{BKLR15II}, \cite{Mue12},
\cite{EGNO15} for the relevant definitions and further details, we just recall that DHR
representations of a local quantum field theory satisfying Haag
duality can be described in terms of \textbf{DHR endomorphisms} of the
quasilocal algebra $\A$, which enjoy covariance, localizability and
transportability properties. They are the objects of the \Cstar
tensor category $\DHR\{\A\}$, and their intertwiners are the morphisms. The fusion product of representations is 
defined through the composition of DHR endomorphisms (the monoidal
product of $\DHR\{\A\}$), which is commutative up to unitary equivalence. 
The unitary equivalence between $\rho\circ\sigma$ and
$\sigma\circ\rho$ is given by the DHR braiding  
$$\eps_{\rho,\sigma}= (v^*\times u^*)\cdot(u \times v) = \sigma(u^*)v^* u \rho(v)\in\Hom(\rho\,\sigma,\sigma\rho)$$
where $u\in\Hom(\rho,\hat\rho)$ and $v\in\Hom(\sigma,\hat\sigma)$ are
unitary charge transporters to equivalent auxiliary DHR endomorphisms
$\hat\rho$, $\hat\sigma$, such that $\hat\rho$ is localizable to the
space-like left of $\hat\sigma$ \footnote{In \cite{FRS92} the opposite right/left convention is
adopted for the $\DHR$ braiding; this is related to a different
convention for the Cayley map given at the beginning of this section.}. 
The unitary braiding thus defined
does not depend on the specific choice of the auxiliary endomorphisms
$\hat\rho$, $\hat\sigma$, and of the charge transporters $u$ and $v$,
and satisfies the naturality axiom, thus turning $\DHR\{\A\}$ into a unitary braided 
tensor category (UBTC). By the definition, if $\rho$ is localizable to the
space-like left of $\sigma$, one may choose $u=v=\oneop$, hence 
$$\eps_{\rho,\sigma}=\oneop.$$
UMTCs are a particular class of UBTCs having irreducible tensor unit,
finitely many inequivalent 
irreducible objects, conjugate objects and non-degenerate braiding
(\textbf{modularity}).
 
The latter is the essentially new feature of $\DHR$ categories arising
in low-dimensional models. Moreover, the key ingredient in the proof
of modularity is the discovery of a deep connection between the
algebraic structure of the net and the structure of its representation
category. More precisely, the two-interval subfactor
\cite[Thm.\ 33]{KLM01} is a Longo-Rehren subfactor
\cite[Prop.\ 4.10]{LoRe95} and is uniquely determined up to isomorphism
by the tensor structure of the category (forgetting the braiding), see
\cite[Cor.\ 35]{KLM01}. Hence the \textbf{DHR braiding} can be seen as
an additional ingredient whose
definition requires, in the low-dimensional case, the choice of a
point (irrespectively of its position) in order to separate the 
localization of DHR endomorphisms.

We close the section by mentioning that complete rationality is
realized by several models: Wess-Zumino-Witten $SU(N)$-currents
\cite{Was98}, Virasoro nets with central charge $c<1$ \cite{Car04},
\cite{KaLo04}, lattice models \cite{DoXu06}, \cite{Bis12}, the
Moonshine vertex operator algebra \cite{KaLo06}. Further candidates
come from more general loop groups \cite{GaFr93} and vertex operator
algebras \cite{CKLW15}. Moreover, complete rationality passes to
tensor products \cite{KLM01}, group-fixed points \cite{Xu00-2}, finite
index extensions and finite index subnets \cite{Lon03}.

\section{Braided actions of DHR categories}\label{sec:braction}

The motivation of our work is the following: in the variety of
completely rational models, one can easily find non-isomorphic ones,
see Definition \ref{def:CFTisom}, having equivalent $\DHR$ categories
in the sense of abstract UBTCs, see \cite[Def.\ 8.1.7, Rmk.\
9.4.7]{EGNO15}. Examples of this can be constructed by looking at
completely rational \textbf{holomorphic nets}, i.e., nets with only
one irreducible $\DHR$ sector: the vacuum. In this case the $\DHR$
category coincides with $\catVec$, the category of finite-dimensional
complex vector spaces, up to unitary braided tensor equivalence. Take
now a completely rational conformal net $\{\A\}$ and tensor it with a
nontrivial holomorphic net $\{\Aholo\}$, then \footnote{Here $\simeq$ 
denotes UBTC equivalence and $\boxtimes$ is the Deligne product 
(the \lq\lq tensor product" in the category of semisimple linear categories).}
$$\DHR\{\A\otimes\Aholo\} \simeq \DHR\{\A\}\boxtimes\DHR\{\Aholo\} \simeq \DHR\{\A\}$$
but $\{\A\}\ncong\{\A\otimes\Aholo\}$, because tensoring
with nontrivial holomorphic nets increases the central charge by a multiple of 8. 
Hence the UBTC equivalence class of the $\DHR$ category is \emph{not} a 
complete invariant for nets, i.e., the correspondence between completely 
rational conformal nets (up to isomorphism) and their $\DHR$ categories (up to UBTC equivalence)
\begin{equation}\label{eq:corresp}\{\A\} \mapsto \DHR\{\A\}\end{equation}
is not one-to-one. We might replace equivalence of categories with the
much stronger notion of isomorphism of categories, see \cite{Mac98},
but this is not what we want to do. Instead we consider the
\textbf{action} of the $\DHR$ category on the net as additional
structure, i.e., consider its realization as a \emph{braided tensor
  category of endomorphisms of the net}. For technical reasons, we
look at the action on a local algebra rather than the \lq\lq global"
defining action $\DHR\{\A\}\subset\End(\A)$ on the quasilocal
algebra. Namely, fix an arbitrary interval $I_0\in\cI$ and consider
the \lq\lq local" full subcategory $\DHR^{I_0}\{\A\}\subset\DHR\{\A\}$
whose objects are the $\DHR$ endomorphisms $\rho$ localizable in
$I_0$, i.e., $\rho_{\restriction\A({I_0}')} =
\id_{\restriction\A({I_0}')}$.
 
Notice that the inclusion functor in this case is also an equivalence, i.e., essentially surjective in addition
\begin{equation}\label{eq:nominimalloc}\DHR^{I_0}\{\A\} \simeq \DHR\{\A\}\end{equation}
because $I_0$ is open and there is by definition (and by M\"obius covariance) no minimal localization length. Considering the action on local algebras means considering the \emph{restriction functor} $\rho\mapsto\rho_{\restriction\A(I_0)}$
\begin{equation}\label{eq:resfun}\DHR^{I_0}\{\A\} \hookrightarrow\End(\A(I_0))\end{equation}
which is well-defined, \emph{strict tensor} and \emph{faithful} by Haag duality on $\RR$.
Recall that the arrows of the endomorphism category on the right hand side are defined as 
$$\Hom_{\End(\A(I_0))}(\hat\rho,\hat\sigma) := \big\{t\in\A(I_0) : t\hat\rho(a) = \hat\sigma(a)t\,,\, a\in\A(I_0)\big\}$$
where $\hat\rho, \hat\sigma\in\End(\A(I_0))$. With conformal symmetry \cite{GuLo96} have shown that the restriction functor is also \emph{full} (i.e., local intertwiners are global), hence an \emph{embedding} of categories.
The restriction functor is by no means essentially surjective, i.e.,
not every (finite index) endomorphism of the injective type $\III_1$
factor $\A(I_0)$ is realized by $\DHR$ endomorphisms of
$\{\A\}$. But it has \emph{replete image}, i.e., it is closed under unitary isomorphism classes in $\End(\A(I_0))$.

The first interesting point concerning the embedding (\ref{eq:resfun}) is the following

\begin{remark}\label{rmk:popathm}
Forgetting the braiding, the remaining \emph{abstract} structure of
$\DHR^{I_0}\{\A\}$ is the one of a \emph{unitary fusion tensor
  category} (UFTC). Functors between unitary categories
(or *-categories) will always be assumed to preserve the
*-structure. A result of Popa \cite{Pop95} states that an embedding
$\C\hookrightarrow \End(\M)$ as above, where $\C$ is a UFTC and $\M$
is the unique injective type $\III_1$ factor, is canonical in the
following sense. 
Take two equivalent UFTCs realized as endomorphisms of injective type $\III_1$ factors $\C\subset\End(\M)$ and $\D\subset\End(\N)$ where we can assume $\M,\N \subset \B(\Hilb)$. By \cite[Cor.\ 6.11]{Pop95}, see also \cite[Cor.\ 35]{KLM01}, there exists a spatial isomorphism $\Ad_U:\M\rightarrow\N$ where $U$ is unitary in $\B(\Hilb)$ which implements an equivalence $\C\simeq\D$ as follows
\begin{equation}\label{eq:outerconjugacy}\hat\rho_i\mapsto\Ad_{U} \circ\, \hat\rho_i \circ\Ad_{U^*} \simeq  \hat\sigma_i\end{equation}
for all $i = 0,\ldots,n$ where $\{\hat\rho_0,\ldots,\hat\rho_n\}$ and
$\{\hat\sigma_0,\ldots,\hat\sigma_n\}$ are generating sets for $\C$
and $\D$ respectively and $\simeq$ stands for unitary isomorphism in 
$\End(\N)$.

If both embeddings are \emph{replete} as in (\ref{eq:resfun}), we can
extend the  equivalence (\ref{eq:outerconjugacy}) to an isomorphism of
categories $\C\cong\D$ and every $\hat\sigma\in\D$ can be written as 
$$\hat\sigma = \Ad_{U} \circ\, \hat{\rho} \circ\Ad_{U^*} =: {^U} \hat\rho$$
for a unique $\hat\rho\in\C$, moreover $t\mapsto \Ad_U(t) =: {^U} t$
gives a *-linear bijection of the Hom-spaces $\Ad_U:
Hom(\hat\rho_i,\hat\rho_j) \rightarrow Hom({^U} \hat\rho_i,
{^U}\hat\rho_j)$. This isomorphism is manifestly strict tensor. 
\end{remark}

Take two nets $\{\A\}$, $\{\B\}$ and consider as in (\ref{eq:resfun}) the replete embeddings of the respective $\DHR$ categories
$$\DHR^{I_0}\{\A\} \hookrightarrow\End(\A(I_0)) , \quad \DHR^{I_0}\{\B\} \hookrightarrow\End(\B(I_0))$$
for some fixed interval $I_0\in\cI$. As we said, it may happen that $\DHR\{\A\} \simeq \DHR\{\B\}$ as UBTCs, hence as UFTCs forgetting the braiding. By Remark \ref{rmk:popathm}, there is a spatial isomorphism $\Ad_U:\A(I_0) \rightarrow \B(I_0)$ which implements a strict tensor isomorphism between the images of the two restrictions, hence between the respective local $\DHR$ subcategories.

However, the latter isomorphism
$F_U:\DHR^{I_0}\{\A\}\rightarrow\DHR^{I_0}\{\B\}$ need \emph{not} preserve the braidings
$$\eps^\A_{\rho_1,\rho_2} = v_2^*\times u_1^*\cdot u_1\times v_2 = \rho_2(u_1^*)v_2^* u_1 \rho_1(v_2)
\in\Hom_{\DHR\{\A\}}(\rho_1\rho_2,\rho_2\rho_1)$$
where $\rho_1,\rho_2\in\DHR^{I_0}\{\A\}$ and $u_1,v_2$ are unitaries in $\A(I_0)$ such that $\Ad_{u_1}\rho_1$ is localizable left to $\Ad_{v_2}\rho_2$ inside $I_0$. Indeed
$$F_U(\eps^\A_{\rho_1,\rho_2}) = \Ad_U(\rho_2(u_1^*)v_2^* u_1 \rho_1(v_2)) = F_U(v_2^*)\times F_U(u_1^*)\cdot F_U(u_1)\times F_U(v_2)$$
is in the correct intertwiner space
$$F_U(\eps^\A_{\rho_1,\rho_2}) \in \Hom_{\DHR\{\B\}}(F_U(\rho_1)F_U(\rho_2),F_U(\rho_2)F_U(\rho_1))$$
but can be $F_U(\eps^\A_{\rho_1,\rho_2}) \neq \eps^\B_{F_U(\rho_1),F_U(\rho_2)}$ because, for instance, $F_U(u_1), F_U(v_2)$ need not be charge transporters which take the respective endomorphisms one left to the other inside $I_0$.

Take now two \emph{isomorphic} nets $\{\A\}$,
$\{\B\}$ (see Definition \ref{def:CFTisom}). Then there is a unitary $W$ which implements spatial isomorphisms $\Ad_W:\A(I) \rightarrow \B(I)$ for \emph{every} $I\in\cI$, hence for $I_0$ and all of its subintervals. The resulting strict tensor isomorphism $F_W:\DHR^{I_0}\{\A\}\rightarrow\DHR^{I_0}\{\B\}$ defined on objects as $\rho \mapsto \Ad_W \circ\, \rho \circ \Ad_{W^*}$ is \emph{braided} in addition. Indeed $F_W$ respects the localization regions of the $\DHR$ endomorphisms, by definition, hence $F_W(\eps^\A_{\rho_1,\rho_2}) = \eps^\B_{F_W(\rho_1),F_W(\rho_2)}$. More generally

\begin{definition}\label{def:braction}
Let $\C$ be an abstract strict UMTC and $\M$ a von Neumann factor. A strict tensor replete embedding 
$$G: \C \hookrightarrow \End(\M)$$
will be called a \textbf{braided action} of $\C$ on $\M$.
\end{definition}

\begin{remark}
The previous notion is purely \emph{tensor} categorical, indeed the
category $\End(\M)$ is an enormous object which does not have a \lq\lq
global" braiding. However any braided action can be promoted to an
actual \emph{braided} functor 
by endowing the (replete tensor) image $G(\C) \subset \End(\M)$ with the braiding $\hat\eps_{G(\rho), G(\sigma)} := G(\eps_{\rho,\sigma})$.
Our terminology is motivated by the importance of the realization of
$\C$ as a \emph{braided} tensor category of endomorphism of $\M$, see
Definition \ref{def:bractionisom} below for the precise formulation of
this statement. The endomorphisms in the range of the
embedding have automatically finite index. Moreover if $\M$ is type
$\III$, they are automatically normal and injective (unital). 
\end{remark}

In our case at hand, $\C := \DHR^{I_0}\{\A\}$ for some fixed
$I_0\in\cI$ and the \textbf{braided action of the DHR category},
remember the equivalence (\ref{eq:nominimalloc}), on $\M_0 := \A(I_0)$
is given by the \emph{restriction functor} (\ref{eq:resfun}).

\begin{definition}\label{def:bractionisom}
Let $\C$, $\D$ be two abstract strict UMTCs and
$\M$, $\N$ two von Neumann 
factors. Two braided actions $G_1:\C \hookrightarrow \End(\M)$ and
$G_2:\D \hookrightarrow \End(\N)$ will be called \textbf{isomorphic}
if there is a spatial isomorphism $\Ad_U:\M\rightarrow \N$ implementing a strict tensor isomorphism between
the respective images which is also braided. Equivalently, the unique strict tensor isomorphism
$F_U: \C \rightarrow \D$ which makes the following diagram commute
\[
\begin{array}{ccr}
  \hskip3mm\C & \stackrel{G_1}{\longhookrightarrow} & \End(\M) \\ 
\scriptstyle{F_U}\Big\downarrow && \Big\downarrow\scriptstyle{\Ad_U} \\ 
  \hskip3mm\D & \stackrel{G_2}\longhookrightarrow  &  \End(\N)
\end{array}
\]
is in addition a UBTC isomorphism.
\end{definition}

Take two nets $\{\A\}$, $\{\B\}$, their respective $\DHR$ categories
together with their braided actions respectively on $\A(I_0)$,
$\B(I_0)$ for some fixed $I_0$. Clearly from the previous discussion,
if $\{\A\}$ and $\{\B\}$ are isomorphic nets (see Definition
\ref{def:CFTisom}) then $\DHR^{I_0}\{\A\}$ and $\DHR^{I_0}\{\B\}$ have
isomorphic braided actions (see Definition \ref{def:bractionisom})
hence we have an \emph{invariant}.

Remarkably, the situation described in Definition \ref{def:braction}
is general for UMTCs, in the sense that 
every abstract UMTC $\C$ admits a braided action on the injective type $\III_1$ factor $\M$.

\begin{remark}\label{rmk:yamembedd}
As in Remark \ref{rmk:popathm}, we drop the braiding on $\C$ and
consider its UFTC structure first. Without loss of generality, i.e.,
up to a (non-strict) tensor equivalence \cite[Thm.\ 1, \S XI.3]{Mac98},
we can assume that $\C$ is strict. Relying on a deep result of
\cite{HaYa00}, we know that the presence of conjugates (rigidity) and
the \Cstar-structure guarantee the existence of a (non-strict) tensor
embedding $G:\C\hookrightarrow\End(\M)$, where $\M$ is the unique
injective type $\III_1$ factor. Now the image of $\C$ in $\End(\M)$
can be endowed with the braiding which promotes $G$ to a braided
embedding, taking care of the nontrivial multiplicativity constraints
of the functors, and can be completed to a UMTC $\hat \C$ realized and
replete in $\End(\M)$, which is equivalent to $\C$ as an abstract
UMTC. The inclusion functor gives then a braided action of $\hat\C$ on
$\M$ in the strong sense employed in Definition
\ref{def:braction}. Similarly to Remark \ref{rmk:popathm}
but in this more general context, the (non-strict) tensor embedding
$G:\C\hookrightarrow\End(\M)$ of a UFTC $\C$ is also expected to be
unique (in a suitable sense, cf.\ \cite[Conj.\ 3.6]{HePe15}). 
\end{remark}

\section{Duality relations}\label{sec:dualityrelations}

Motivated by \cite{Dop82} we consider the duality pairing 
\begin{equation}\label{eq:dualpar}\A\,\stackrel{\perp}{\longleftrightarrow}\,\DHR\{\A\}\end{equation}
between the $\DHR$ category and the algebra $\A$ of quasilocal observables of a given (Haag dual) local conformal net $\{\A\}$, defined by the action 
$(a,\rho) \mapsto \rho(a)$.

\begin{definition}
Given a unital \Cstar-subalgebra $\N\subset\A$ we define its \textbf{dual} as
$$\N^\perp := \big\{\rho\in\DHR\{\A\} : \rho(n) = n,\, n\in \N\big\}$$
and $\Hom_{\N^\perp}(\rho,\sigma) := \Hom_{\DHR\{\A\}}(\rho,\sigma)$ for every $\rho,\sigma\in\N^\perp$. In other words, $\N^\perp \subset \DHR\{\A\}$ is a full subcategory, i.e., specified by its objects only. 
\end{definition}

$\N^\perp$ is automatically a unital tensor category of endomorphisms of $\A$. Conversely 

\begin{definition}
Given a unital tensor full subcategory $\C\subset \DHR\{\A\}$ we define its \textbf{dual} as
$$\C^\perp := \big\{a\in \A : \sigma(a) = a,\,\sigma\in\C\big\}.$$
\end{definition}

$\C^\perp$ is automatically a unital \Cstar-subalgebra of $\A$. We have the following

\begin{proposition}\label{prop:dualofwedge}
Let $\{\A\}$ be a local conformal net on the line fulfilling in addition Haag duality on $\RR$ (assumption (a)). Take $\A(W)\subset\A$ where $W\subset\RR$ is a half-line, left or right oriented, then
$$\A(W)^\perp = \DHR^{W'}\{\A\}$$
where $\DHR^{W'}\{\A\}$ is the full subcategory of $\DHR\{\A\}$ whose objects are the endomorphisms localizable in the half-line $W'$, opposite to $W$. 
\end{proposition}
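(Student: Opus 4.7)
The plan is to verify the two set-theoretic inclusions separately; the argument should be essentially a geometric unpacking of the DHR localization condition in terms of the C*-algebraic structure of the net on unbounded regions, and I expect it to stay at that level without needing any vN/C* approximation arguments.

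For the inclusion $\DHR^{W'}\{\A\} \subseteq \A(W)^\perp$, I would take $\rho \in \DHR^{W'}\{\A\}$: by definition $\rho$ is localized in some bounded $J \subset W'$, so $\rho_{\restriction \A(J')} = \id$. Since $W \subset J'$ one has $\A(W) \subset \A(J')$, and hence $\rho$ fixes $\A(W)$ pointwise, i.e.\ $\rho \in \A(W)^\perp$.

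For the reverse inclusion $\A(W)^\perp \subseteq \DHR^{W'}\{\A\}$, I would start with $\rho \in \A(W)^\perp$ and use the DHR structure to fix a bounded localization interval $I_0 \in \cI$ for $\rho$, so that in addition $\rho_{\restriction \A(I_0')} = \id$. Writing $W = (-\infty, p)$ (the right-oriented case being symmetric), I would set $q := \max(p, \sup I_0) + 1$ and take $J := (p, q) \subset W'$. The key observation is that
$$J' = (-\infty, p) \sqcup (q, \infty)$$
is a \emph{disjoint} union of two open half-lines, so every bounded sub-interval of $J'$ sits entirely in one of the two pieces; directly from the definition of $\A(\cdot)$ on unbounded regions one obtains
$$\A(J') = \A(W) \vee_{C^*} \A((q, \infty)).$$
Both generators are already pointwise fixed by $\rho$ (the first by hypothesis, the second because $(q, \infty) \subset I_0'$), so norm-continuity of $\rho$ propagates the fixation to all of $\A(J')$. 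This shows that $\rho$ is localized in $J \subset W'$, i.e.\ $\rho \in \DHR^{W'}\{\A\}$.

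The only delicate point is the identification $\A(J') = \A(W) \vee_{C^*} \A((q, \infty))$, and what makes it go through is precisely the choice $q > p$, which buys a gap $[p, q]$ so that no bounded interval straddling the boundary point $p$ of $W$ ever has to be considered. This is what lets the argument stay at the purely C*-algebraic level and bypass any appeal to strong additivity, local normality of DHR endomorphisms, or essential duality; Haag duality on $\RR$ enters only implicitly, as the basis for realizing DHR representations of $\{\A\}$ as endomorphisms of the quasilocal C*-algebra in the first place.
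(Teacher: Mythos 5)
Your proof is correct and is exactly the argument the paper compresses into one line ("one inclusion is trivial, the other follows from the definition of DHR localizability and norm continuity"): the easy direction from $W\subset J'$, and the converse by choosing a bounded $J\subset W'$ whose causal complement $J'$ is a disjoint union of two half-lines, each already fixed pointwise by $\rho$, so that norm continuity and multiplicativity of $\rho$ give $\rho_{\restriction\A(J')}=\id$. Your only imprecision is cosmetic: what matters is not the "gap" $[p,q]$ but simply that the two components of $J'$ are disjoint open half-lines (so every $I\in\cI$ contained in $J'$ lies entirely in one of them), and for this any $q\geq\sup I_0$ with $q>p$ suffices.
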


\begin{proof}
One inclusion is trivial, the other follows from the definition of $\DHR$ localizability of endomorphisms and norm continuity.
\end{proof}

Combining Proposition \ref{prop:relessduality} and
\ref{prop:dualofwedge} we obtain

\begin{corollary}
Let $\{\A\}$ be a local conformal net on the line fulfilling Haag duality on $\RR$ (assumption (a)) and the split property (assumption (b)). Then
${\A(W)^c}^\perp = \DHR^{W}\{\A\}$ for every half-line $W\subset\RR$, left or right oriented. In particular
$${\A(W)}^\perp \simeq \DHR\{\A\} \simeq {\A(W)^c}^\perp$$
as UBTCs. 
\end{corollary}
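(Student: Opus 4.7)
The proof is essentially a two-step chaining of the results just established in the preceding propositions, followed by an essential-surjectivity remark to upgrade the equalities to UBTC equivalences.

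First, I would observe that by Proposition \ref{prop:relessduality} (which requires only the split property and not Haag duality), the relative commutant of a half-line algebra in the quasilocal $C^*$-algebra is the opposite half-line algebra: $\A(W)^c = \A(W')$. Substituting this into the left-hand side of the desired identity reduces the claim to ${\A(W')}^\perp = \DHR^W\{\A\}$, which is exactly Proposition \ref{prop:dualofwedge} applied to the half-line $W'$ (noting $W'' = W$). So the equality ${\A(W)^c}^\perp = \DHR^W\{\A\}$ is just the composition of these two structure results.

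For the UBTC equivalences $\A(W)^\perp \simeq \DHR\{\A\} \simeq {\A(W)^c}^\perp$, both duals have already been identified as the full subcategories $\DHR^{W'}\{\A\}$ and $\DHR^W\{\A\}$ of $\DHR\{\A\}$. The inclusion functors are fully faithful by definition (full subcategory), strict tensor, and automatically braided as they preserve the ambient DHR braiding. To get equivalence with the whole $\DHR\{\A\}$, I invoke essential surjectivity: by DHR transportability every sector contains representatives localized in any prescribed open region, in particular in any bounded subinterval of the open half-lines $W$ or $W'$. This is the same mechanism that gives the equivalence (\ref{eq:nominimalloc}) $\DHR^{I_0}\{\A\} \simeq \DHR\{\A\}$ used in the bounded-interval case, and it applies verbatim here since $W$ and $W'$ are open and contain intervals of arbitrarily small diameter.

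There is no real obstacle; the only subtlety worth flagging is that Proposition \ref{prop:relessduality} does the actual work (it is the non-trivial input), while Proposition \ref{prop:dualofwedge} is tautological once one unwinds the definition of localizability. The corollary is therefore purely a book-keeping statement making explicit that, on the real line with half-lines in place of intervals, one obtains the same duality pairing between observables and DHR endomorphisms as in the familiar bounded-interval picture, with the two half-lines $W$ and $W'$ playing symmetric roles via essential duality.
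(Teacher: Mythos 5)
Your proof is correct and follows exactly the route the paper intends: the corollary is stated as an immediate consequence of ``combining Proposition \ref{prop:relessduality} and \ref{prop:dualofwedge}'', i.e.\ substituting $\A(W)^c=\A(W')$ into $\A(W')^\perp=\DHR^{W''}\{\A\}=\DHR^{W}\{\A\}$, precisely as you do. Your added remark on essential surjectivity via transportability (the same mechanism as in the equivalence (\ref{eq:nominimalloc})) correctly supplies the step needed to upgrade the identifications to UBTC equivalences with all of $\DHR\{\A\}$.
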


Also, by definition, we have trivial braiding operators
\begin{equation}\label{eq:QFTbraiding}\eps_{\rho\,\sigma} = \oneop\end{equation}
whenever $\rho\in\DHR^{W}\{\A\}$, $\sigma\in\DHR^{W'}\{\A\}$ and $W$
is a \emph{left} half-line, hence $W'$ a \emph{right} half-line.
Equation (\ref{eq:QFTbraiding}) is the characteristic feature of the
$\DHR$ braiding coming from spacetime localization of charges in
QFT. An abstract UBTCs need not have this kind of trivialization
property for braiding operators at all. 

The situation is different for local algebras $\A(I)\subset\A$,
$I\in\cI$, as shown by Doplicher in \cite[Prop.\ 2.3]{Dop82} with the
split property (assumption (b)):  

\begin{proposition}\label{prop:dualofsergio} \emph{\cite{Dop82}.} 
Let $\{\A\}$ be a local conformal net on the line fulfilling in addition assumptions (a) and (b), then
$${\A(I)^c}^\perp = \langle\Inn^{I}\{\A\}\rangle_\oplus$$
for every $I\in\cI$, where $\Inn^{I}\{\A\}$ is the full subcategory of $\DHR\{\A\}$ whose objects are the inner automorphisms localizable in $I$ and $\langle - \rangle_\oplus$ denotes the completion under (finite) direct sums in $\A(I)$, i.e., the inner endomorphisms localizable in $I$.
\end{proposition}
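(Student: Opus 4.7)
My plan is to prove the two inclusions separately, with the harder direction handled via the split property, a $W^*$-tensor factorization through an intermediate type I factor, and Haag duality.

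The forward inclusion $\langle \Inn^I\{\A\}\rangle_\oplus \subset {\A(I)^c}^\perp$ is routine: for a unitary $u \in \A(I)$, $\Ad_u$ fixes $\A(I)^c$ pointwise since $u$ commutes with $\A(I)^c$ by definition of the relative commutant, and the same argument applies to finite direct sums $\rho(a) = \sum_i v_i a v_i^*$ with isometries $v_i \in \A(I)$ satisfying the Cuntz relations.

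For the reverse inclusion, let $\rho \in \DHR\{\A\}$ with $\rho_{\restriction \A(I)^c} = \id$. First, since $\A(I') \subset \A(I)^c$, $\rho$ fixes $\A(I')$ pointwise, so by Haag duality $\rho$ is localized in $I$ in the strong sense $\rho(\A(J)) \subset \A(J)$ for every $J \supset I$. Pick $J\in\cI$ with $I \Subset J$, and use the split property (b) to find an intermediate type I factor $\A(I) \subset \cF \subset \A(J)$. The standard factorization of a von Neumann algebra containing a type I factor gives the $W^*$-tensor decomposition
$$\A(J) \;=\; \cF \,\bar\otimes\, (\cF' \cap \A(J)),$$
with the second factor itself a (type $\III_1$) factor by factoriality of $\A(J)$. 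Since $\A(I) \subset \cF$, we have $\cF' \cap \A(J) \subset \A(I)' \cap \A(J) \subset \A(I)^c$, so the restriction $\rho_J := \rho_{\restriction \A(J)}$ acts trivially on the second tensor factor. A brief relative-commutant computation inside the tensor decomposition, together with normality of $\rho_J$, then forces $\rho_J(\cF) \subset \cF$ and the product form $\rho_J = \tilde\rho \,\bar\otimes\, \id_{\cF' \cap \A(J)}$, where $\tilde\rho := \rho_J|_\cF$. Since $\cF \cong \B(\Hilb_1)$ is a type $I_\infty$ factor and $\tilde\rho$ has finite Jones index (inherited from the DHR index of $\rho$), the endomorphism $\tilde\rho$ is inner in the Cuntz-system sense: there exist isometries $v_1,\ldots,v_d \in \cF$ with $v_i^* v_j = \delta_{ij}\oneop$ and $\sum_i v_i v_i^* = \oneop$ such that $\tilde\rho(x) = \sum_i v_i x v_i^*$, and hence $\rho(a) = \sum_i v_i a v_i^*$ for every $a \in \A(J)$.

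It remains to extend the intertwining relations $v_i a = \rho(a) v_i$ from $\A(J)$ to all of $\A$ and to relocate the $v_i$ inside $\A(I)$. For $a \in \A(J')$ one has $\rho(a) = a$ by localization and $v_i a = a v_i$ by locality, so the intertwining relation persists there; strong additivity together with norm density then propagate it to every $a \in \A$. Consequently each $v_i$ commutes with $\A(I)^c$, in particular with $\A(I') \subset \A(I)^c$, and Haag duality forces $v_i \in \A(I')' = \A(I)$. This exhibits $\rho$ as a direct sum of inner automorphisms implemented by isometries in $\A(I)$, that is, as an element of $\langle \Inn^I\{\A\}\rangle_\oplus$. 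The main obstacle I anticipate is the tensor factorization step $\rho_J = \tilde\rho \,\bar\otimes\, \id$: it is intuitively forced by the triviality of $\rho_J$ on the second factor, but a rigorous justification requires factoriality of $\cF' \cap \A(J)$, normality of $\rho_J$, and care with the relative commutant inside the $W^*$-tensor product; once this is in hand, everything else reduces to standard DHR manipulations and Haag duality.
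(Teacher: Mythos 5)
Your argument is correct and is essentially the proof the paper relies on: the paper does not prove this proposition itself but cites Doplicher (and M\"uger for the non-abelian case), whose argument is exactly your chain of split property, $W^*$-factorization through the intermediate type~$I$ factor, the fact that normal unital endomorphisms of a type~$I$ factor are implemented by Cuntz isometries, extension of the intertwining relation to all of $\A$, and finally Haag duality $\A(I')'=\A(I)$ to relocate the isometries. One cosmetic point: since assumption (c) is not in force here, the finiteness of the Cuntz family should be drawn from the finite reducibility of $\rho$ (built into the definition of $\DHR\{\A\}$) rather than from a ``finite Jones index''.
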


In particular,
\begin{equation}\A(I)^\perp \simeq \DHR\{\A\} , \quad {\A(I)^c}^\perp \simeq \catVec.\end{equation}

\begin{remark}
The previous proposition has a deep insight in the theory of $\DHR$ superselection sectors in any spacetime dimension, see also \cite[Lem.\ III-1 (erratum)]{Bor65}, \cite[Sec. V]{DHR69I}, \cite[Sec. 1.9]{Rob11II} and discussions therein. Notice also that the proof in \cite{Dop82} is formulated in 3+1 dimensions and holds in the case of Abelian gauge symmetry, i.e., $\DHR$ automorphisms only. See \cite[Prop.\ 4.2]{Mue99} for the adaptation to the general case, and \cite{Dri79} for related arguments. Notice also that by definition $\DHR^{I}\{\A\} = \A({I}')^\perp$. 
\end{remark}

Furthermore, using now all the assumptions of complete rationality (a), (b), (c), we can prove our second main structure result

\begin{proposition}\label{prop:dualoflocalcat}
Let $\{\A\}$ be a completely rational conformal net on the line, then
$${\DHR^{I}\{\A\}}^\perp = \A({I}')$$
for every $I\in\cI$.
\end{proposition}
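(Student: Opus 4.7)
The easy inclusion $\A(I')\subset\DHR^I\{\A\}^\perp$ is tautological: any $\sigma\in\DHR^I\{\A\}$, being by definition localizable in $I$, restricts to the identity on $\A(I')$. The content is the reverse inclusion, and my plan splits it into a local stage (inside a fixed $\A(J)$ with $I\Subset J$) followed by a globalization stage.

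For the local stage, fix $J\in\cI$ with $I\Subset J$ and write $I'\cap J=I_1\cup I_2$. I claim $\A(J)\cap\DHR^I\{\A\}^\perp=\A(I'\cap J)$. Take $a\in\A(J)$ fixed by every $\sigma\in\DHR^I\{\A\}$. Since $\Inn^I\{\A\}\subset\DHR^I\{\A\}$, $a$ is in particular fixed by every $\Ad_u$ with $u\in\cU(\A(I))$, so $a\in\A(I)'\cap\A(J)$, which by complete rationality is the upper algebra $\A(I_1\cup I_2)^{cc}$ of the two-interval subfactor of \cite{KLM01}. By \cite[Thm.\ 33]{KLM01}, this subfactor is the canonical Longo-Rehren extension attached to the modular category $\DHR\{\A\}$: the upper algebra decomposes over the lower one into charge-sectors indexed by the simple DHR objects, each sector $[\rho]$ being generated by DHR charge transporters $T_\rho$ that exchange a copy of $\rho$ between $I_1$ and $I_2$. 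For $\sigma\in\DHR^I\{\A\}$ localized in $I$, its localization sits between the two supports of $\rho$ entering $T_\rho$, so naturality of the braiding combined with the trivialization \eqref{eq:QFTbraiding} on each side of $I$ collapses the action of $\sigma$ on the $\rho$-sector to multiplication by the monodromy $\epsilon_{\sigma,\rho}\epsilon_{\rho,\sigma}$. Non-degeneracy of the DHR braiding (modularity, \cite[Cor.\ 37]{KLM01}) furnishes, for every non-trivial simple $\rho$, some $\sigma\in\DHR^I\{\A\}$ producing a non-trivial monodromy scalar; hence the invariance of $a$ kills every non-trivial charge component and leaves $a\in\A(I_1\cup I_2)=\A(I'\cap J)$.

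For the global stage, I would approximate a general $a\in\A\cap\DHR^I\{\A\}^\perp$ in norm by local $a_n\in\A(J_n)$ with $J_n\supset I$ and $J_n\nearrow\RR$, and push each $a_n$ onto the invariant subalgebra $\A(I'\cap J_n)\subset\A(I')$ by a contractive projection $F_n$. The natural candidate for $F_n$ is the composition of the finite-index conditional expectation of the two-interval subfactor $\A(I_1^n\cup I_2^n)^{cc}\to\A(I_1^n\cup I_2^n)$ with a preliminary expectation $\A(J_n)\to\A(I)'\cap\A(J_n)$ constructed, in the style of the proof of Proposition \ref{prop:relessduality}, by averaging over the amenable unitary group of the injective $\III_1$ factor $\A(I)$. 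Since $F_n$ acts as the identity on the local invariants and is norm one, $\|a-F_n(a_n)\|\to 0$ and $F_n(a_n)\in\A(I'\cap J_n)\subset\A(I')$, placing $a$ in the norm closure $\A(I')$.

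I expect the main technical obstacle to lie in this last step: controlling the two successive projections and, more importantly, verifying that their composite genuinely preserves $\DHR^I\{\A\}$-invariance (so that invariant $a_n$ stay invariant, and approximately invariant $a_n$ only lose a little in the projection). A cleaner route that would sidestep the local compressions altogether is to build a single global conditional expectation $E:\A\to\A(I')$ directly — for instance as a weighted categorical average $E=D^{-2}\sum_i d(\rho_i)^2\,\Phi_{\rho_i}$ over the simple DHR objects, in the spirit of the quantum double / Drinfeld center realization of the two-interval subfactor — and then simply read off $E(a)=a\in\A(I')$ from invariance. This is in the same spirit as the adaptation of \cite[Prop.\ 2.3]{Dop82} already used for Proposition \ref{prop:dualofsergio}.
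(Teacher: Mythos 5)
Your overall architecture coincides with the paper's: use inner automorphisms to land in $\A(I)'\cap\A(J)$, identify this with the upper algebra of the two-interval (Longo--Rehren) subfactor, expand $a$ harmonically over the charge sectors, let the braiding act on the generators, and invoke modularity to kill the non-vacuum components before globalizing. The first genuine gap is in the local stage, precisely where you pass from dimension-one sectors to general ones. The phrase ``non-degeneracy furnishes, for every non-trivial simple $\rho$, some $\sigma$ producing a non-trivial monodromy \emph{scalar}'' conflates the monodromy $\eps_{\sigma,\rho}\eps_{\rho,\sigma}$ (a non-scalar operator in $\End(\rho\sigma)$ as soon as $d_\rho,d_\sigma>1$) with its normalized trace (an entry of the $S$-matrix). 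For $d_\rho>1$ the invariance equation reads $\sum_i a_i\,\overline R_i=\sum_i a_i\,\eps_{\overline\rho_i^2,\sigma}\,\overline R_i$ where the perturbed coefficient $a_i\,\eps_{\overline\rho_i^2,\sigma}$ no longer lies in the lower algebra $\A(I_1\cup I_2)$, so uniqueness of the harmonic expansion cannot be invoked, and a non-trivial monodromy \emph{operator} does not by itself force $a_i=0$. The paper closes this by the ``killing ring'': contracting the monodromy with the conjugate isometries $\overline R_\sigma$, averaging over one representative per sector with weights $d_\sigma^2$, and using unitarity of the modular $S$-matrix \cite{Reh90}, \cite{KLM01} to see that only the vacuum component survives. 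Your closing remark about a weighted categorical average $D^{-2}\sum_i d_{\rho_i}^2\,\Phi_{\rho_i}$ is exactly this mechanism; it should be promoted from an aside to the main argument, since without it the local stage only covers the pointed case.

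The second gap is the globalization. From ``$F_n$ is norm one and acts as the identity on local invariants'' you cannot conclude $\|a-F_n(a_n)\|\to 0$: the approximants $a_n$ are \emph{not} invariant, and nothing relates $F_n(a_n)$ to $a$ uniformly in $n$ unless the projections are mutually compatible. The paper's route is to first place $a$ in $\A(I)^c=C^*\bigl(\bigcup_n\A(K_n)\cap\A(I)'\bigr)$ via the averaging expectation of Proposition \ref{prop:relessduality}, expand each local approximant harmonically, and then prove that the coefficients $a_{n,i}=\lambda\,E_n(a_n\overline R_i^*)$ form Cauchy sequences because the minimal expectations of the nested two-interval subfactors are compatible, ${E_m}_{\restriction\A(K_n)\cap\A(I)'}=E_n$ for $m>n$ \cite[Lem.\ 11]{KLM01}. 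This yields a harmonic expansion of $a$ itself with coefficients in $\A(I')$ (Corollary \ref{cor:reallinetwointerval}), to which the local modularity argument applies verbatim. That compatibility lemma is the missing ingredient that would make your successive projections converge; as written, the global stage is not closed.
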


\begin{proof}
$(\supset)$: trivial by definition of $\DHR$ localization.

$(\subset)$: take $a\in\A$ such that $\rho(a)=a$ for all $\rho\in\DHR^{I}\{\A\}$. It follows easily that $a\in\A(I)^c = \A(I)'\cap \A$ by using inner automorphisms localizable in $I$, the task is to show that $a\in\A({I}')$. We divide the proof into three steps.

We first assume that (i) $a\in\A_{\rm loc}$, i.e., 
$a\in\A(K)$ for some sufficiently big interval $I\Subset K$ and that
(ii) all $\DHR$ endomorphisms have dimension $d_\rho = 1$
(pointed category case).

Then the inclusion $\A({I}')\subset\A(I)^c$ is locally the two-interval subfactor $\A(I_1\cup I_2)\subset \A(I)'\cap\A(K) = \A(I)^c$ where ${I}'\cap K = I_1\cup I_2$ and $I_1,I_2\in\cI$. Hence $a\in\A(I)^c$ has a unique \lq\lq harmonic'' expansion \cite[Eq.\ (4.10)]{LoRe95}
\begin{equation}\label{eq:harmonicexpansion} a = \sum_{i=0,\ldots,n} a_i \overline R_i \end{equation}
where $a_i\in\A(I_1\cup I_2)$ are uniquely determined coefficients and $\overline R_i\in\A(I)^c$ are (fixed) generators of the extension. The computation of this extension is the core of \cite{KLM01}. The extension has finite index by assumption (c) and the generators are uniquely determined, up to multiplication with elements of $\A(I_1\cup I_2)$, by the $\DHR$ category of $\{\A\}$. Indeed 
$$\overline R_i\in\Hom_{\DHR\{A(I)\}}(\id,\rho^1_i \overline{\rho}^2_i)$$
are solutions of the conjugate equations \cite[Sec. 2]{LoRo97} for the $i$-th sector $[\rho_i]$ where $\rho^1_i$ is localizable in $I_1$ and $\overline \rho^2_i$ is localizable in $I_2$, and $n$ is the number of $\DHR$ sectors of the theory different from the vacuum $[\rho_0] = [\id]$. By Frobenius reciprocity \cite[Lem.\ 2.1]{LoRo97} and up to multiplication with elements of $\A(I_1\cup I_2)$, the generators $\overline R_i$ can be thought as unitary $[\rho_i]$-charge transporters from $I_2$ to $I_1$, equivalently as unitary $[\overline\rho_i]$-charge transporters from $I_1$ to $I_2$. By assumption, for all $\rho\in \DHR^{I}\{\A\}$ we have
$$a = \sum_i a_i \overline R_i = \rho(a) = \sum_i a_i \rho(\overline R_i)$$
To fix ideas, from now on we assume $I_1$ left to $I$ and $I_2$ right to $I$.
By naturality and tensoriality of the braiding, see \cite[Lem.\ 2.6]{DHR71}, \cite[Sec. 2.2]{FRS92}, we have
$$\eps_{\rho^1_i, \rho}\, \rho^1_i(\eps_{\overline\rho^2_i, \rho})\,\overline R_i = \rho(\overline R_i)$$
which reduces to
$$\rho(\overline R_i) = \eps_{\overline\rho^2_i,\rho}\,\overline R_i$$
because of the respective localization properties of the endomorphisms. In this special case we have $\eps_{\overline\rho_i^2,\rho} = \lambda_{\overline\rho_i,\rho}\oneop$ where $\lambda_{\overline\rho_i,\rho}\in\mathbb{T}$ is a complex phase, hence $a_i\,\eps_{\overline\rho^2_i,\rho} \in \A(I_1\cup I_2)$ and by uniqueness of the previous expansion, if $a_i \neq 0$ we must have $\eps_{\overline\rho^2_i,\rho} = \oneop$ for all $\rho\in\DHR^{I}\{\A\}$. But also $\eps_{\rho,\overline\rho^2_i} = \oneop$ for all $\rho\in\DHR^{I}\{\A\}$, hence $[\overline\rho_i]$ is degenerate. By modularity of the category all coefficients $a_i = 0$ for $i = 1,\ldots,n$ and we are left with $a = a_0$ because $\overline R_0 = \oneop$ can be chosen without loss of generality. In particular, $a\in\A(I_1 \cup I_2)$.

We now relax the assumption (ii) about the category and allow $\DHR$ endomorphisms of dimension $d_\rho > 1$. As above we have
$$a = \rho(a) = \sum_i a_i\, \eps_{\overline\rho^2_i,\rho}\,\overline R_i$$
for all $\rho\in\DHR^{I}\{\A\}$ but now the coefficients have different localization properties and we need a more refined argument. Then rewrite
$$a = \sum_i a_i\, \rho_i^1(\eps_{\rho,\overline\rho_i^2}\, \eps_{\overline\rho^2_i,\rho})\overline R_i$$
and consider for all $\rho\in\DHR^{I}\{\A\}$ a conjugate endomorphism $\overline \rho$ again localizable in $I$ and operators $\overline R_\rho\in\Hom_{\DHR\{A(I)\}}(\id,\rho \overline{\rho})$ as before. The latter are $\overline R_\rho \in \A(I)$ and can be normalized such that $\overline R_\rho ^* \overline R_\rho = d_\rho\oneop$. Then we can write 
$$a = d_\rho^{-1} \overline R_\rho ^* \overline R_\rho a = d_\rho^{-1} \overline R_\rho ^* a \overline R_\rho = d_\rho^{-1} \sum_i a_i\, \overline R_\rho^*\,\rho_i^1(\eps_{\rho,\overline\rho_i^2}\, \eps_{\overline\rho^2_i,\rho})\overline R_i \overline R_\rho$$
by locality, and using $\rho_i^1\overline\rho_i^2(\overline R_\rho^*) = \overline R_\rho^*$ we have also
$$a = \rho(a) = d_\rho^{-1} \sum_i a_i\, \rho_i^1\overline\rho_i^2(\overline R_\rho^*)\,\rho_i^1(\eps_{\rho,\overline\rho_i^2}\, \eps_{\overline\rho^2_i,\rho})\overline R_i \overline R_\rho$$
where on the right hand side we have formed a \lq\lq killing-ring'',
after \cite[Sec. 3]{BEK99}, in order to exploit \emph{modularity}. Then choose one representative for each sector $\rho_j\in\DHR^{I}\{\A\}$ where $j=0,\ldots,n$ and consider
$$(\sum_j d_{\rho_j}^2)\, a = \sum_j d_{\rho_j}^2\, \rho_j(a) = \sum_{i,j} a_i\, d_{\rho_j} \rho_i^1\overline\rho_i^2(\overline R_{\rho_j}^*)\,\rho_i^1(\eps_{{\rho_j},\overline\rho_i^2}\, \eps_{\overline\rho^2_i,\rho_j})\overline R_i \overline R_{\rho_j}$$
$$ = \sum_i a_i\, (\sum_k d_{\rho_k}^2)\, \delta_{[\overline\rho_i],[\id]} \overline R_i = (\sum_k d_{\rho_k}^2)\, a_0 \overline R_0$$
by unitarity of the $S$-matrix, as shown by \cite{Reh90} in the case of UMTCs. As before we conclude $a = a_0 \in\A(I_1 \cup I_2)$.

It remains the case when $a\in\A\smallsetminus\Aloc$ relaxing assumption (i).
By the split property (assumption (b)) we have that $\A(I)$ is injective hence generated by an amenable group of unitaries. Averaging over its adjoint action (cf.\ proof of Proposition \ref{prop:relessduality}) we get a conditional expectation $E: \B(\Hilb) = \A(I) \vee \A(I)' \rightarrow \A(I)'$
mapping for all $I\Subset K$, $K\in\cI$
$$E(\A(K)) = \A(K) \cap \A(I)' , \quad E(\A) = \A(I)^c.$$
Since $E$ is norm continuous we have
$$\A(I)^c = \text{\Cstar}(\cup_{n\in\NN}\, \A(K_n) \cap \A(I)') ,\quad I\Subset \K_n\nearrow \RR\,,\; K_n\in\cI$$
hence we can write $a = \lim_{n} a_n$ where $a_n\in \A(K_n) \cap \A(I)'$. As in the previous steps we get
$$a_n = \sum_i a_{n,i} \overline R_i$$
where we can choose $\overline R_i$ independently of $n$ (at least for big $n$).
From the assumptions and norm continuity of $\rho\in\DHR^{I}\{\A\}$ we have
$$a = \rho(a) = \lim_{n} \rho(a_n) = \lim_n \sum_i a_{n,i}\, \eps_{\overline{\rho}_i^2,\rho}\, \overline R_i.$$
Now we show that for all $i$ the sequences $(a_{n,i})_n$ converge to some $b_i\in\A({I}')$. Indeed the coefficients are explicitly given \cite[Eq.\ (4.10)]{LoRe95} as 
$$ a_{n,i} = \lambda\, E_n(a_n \overline R_i^*)$$
where $\lambda$ is the $\mu_2$-index of the two-interval subfactor and we denoted by $E_{n} : \A({K_n})\cap\A(I)' \rightarrow \A(K_n \cap {I}')$ the minimal conditional expectations, see \cite[Prop.\ 5]{KLM01}. Compute 
$$\| a_{n,i} - a_{m,i} \| = \lambda\, \|E_n(a_n \overline R_i^*) - E_m(a_m \overline R_i^*)\|$$
but now it holds \cite[Lem.\ 11]{KLM01} that ${E_m}_{\restriction \A({K_n})\cap\A(I)'} = E_n$ if $m>n$, thus
$$ \lambda\, \|E_m((a_n - a_m) \overline R_i^*)\| \leq \lambda\, (d_{\rho_i})^{1/2} \|a_n - a_m\| \longrightarrow 0 $$
for $n,m \rightarrow \infty$. Then $(a_{n,i})_n$ are Cauchy
sequences. Since $\A({I}')$ is by definition norm closed, the limit points $b_i\in\A({I}')$ exist. Hence we have shown that the (local) unique expansion formula (\ref{eq:harmonicexpansion}) makes sense also in the quasilocal limit for the inclusion $\A({I}') \subset \A(I)^c$
\begin{equation}a = \sum_i b_i \overline R_i.\end{equation}
With the same argument as in the (local) two-interval case we can show that $\rho(a)=a$ for all $\rho\in\DHR^{I}\{\A\}$ implies $b_i = 0$ whenever $i\neq 0$, hence $a=b_0\in\A({I}')$ and the proof is complete.
\end{proof}

\begin{remark}\label{rmk:dop82conj}
A statement similar to the previous proposition appears in \cite{Dop82} as a \lq\lq natural conjecture'' which explains the shape of the inclusion $\A(\O') \subset \A(\O)^c$ where $\O$ is any open double cone region in Minkowski spacetime $\RR^{3+1}$. The generators of the extension can be interpreted in that case as local measurements of (global Abelian) superselection charges, see also \cite{DoLo83}. The situation here is much different: $\DHR$ superselection charges in low dimensions have non-degenerately braided statistics (opposite to permutation group), the category is modular instead of symmetric, there is no global gauge symmetry and the generators of the extension $\A(I')\subset \A(I)^c$, where $I\in\cI$, seem to have a purely topological nature. Surprisingly (in the light of the previous facts) the proof of the statement relies essentially on modularity. To our knowledge, by now there is no other proof of the statement in different contexts.
\end{remark}

From the previous proof, we also get the following

\begin{corollary}\label{cor:reallinetwointerval}
With the assumptions of Proposition \ref{prop:dualoflocalcat}, every element $a\in\A(I)^c = \A(I)'\cap\A$ admits a unique \lq\lq harmonic'' expansion, cf.\ \emph{\cite[Eq.\ (4.10)]{LoRe95}}
$$a = \sum_{i=0,\ldots,n} b_i \overline R_i$$
where $b_i\in\A({I}')$ are uniquely determined coefficients and $\overline R_i\in\Hom(\id,\rho^1_i \overline{\rho}^2_i)\subset\A(I)^c$ are (fixed) generators of the extension of unital \Cstar-algebras
$$\A({I}')\subset \A(I)^c.$$

In particular, for holomorphic conformal nets it holds (cf.\ Proposition \ref{prop:relessduality})
$$\Aholo({I}') = \Aholo(I)^c.$$
\end{corollary}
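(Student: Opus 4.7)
The plan is to harvest the statement directly from the machinery assembled in the proof of Proposition \ref{prop:dualoflocalcat}. On local algebras the harmonic expansion is already available from Longo-Rehren \cite{LoRe95} and \cite{KLM01}: for $K\in\cI$ with $I\Subset K$ and $I'\cap K=I_1\cup I_2$, the two-interval subfactor $\A(I_1\cup I_2)\subset\A(I)'\cap\A(K)$ is a canonical finite-index extension generated by a fixed family $\overline R_i\in\Hom(\id,\rho^1_i\overline\rho^2_i)$ indexed by the irreducible $\DHR$ sectors $i=0,\dots,n$, and every $a\in\A(I)'\cap\A(K)$ has a unique expansion $a=\sum_i a_i^{(K)}\overline R_i$ with $a_i^{(K)}\in\A(I_1\cup I_2)\subset\A(I')$. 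Uniqueness of the coefficients globally will follow from uniqueness in this local step, so the only real task is to pass to the quasilocal limit.

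Next I would approximate a given $a\in\A(I)^c$ by local elements. Using the split property, $\A(I)$ is injective and hence produces, by averaging over its unitary group (as in the proof of Proposition \ref{prop:relessduality}), a norm-continuous conditional expectation $E:\B(\Hilb)\to\A(I)'$ whose image on the quasilocal algebra is exactly $\A(I)^c$ and which maps $\A(K_n)$ to $\A(K_n)\cap\A(I)'$ for any increasing exhaustion $I\Subset K_n\nearrow\RR$. Thus $a=\lim_n a_n$ in norm with $a_n\in\A(K_n)\cap\A(I)'$, and the generators $\overline R_i$ can be chosen once and for all (independent of $n$ for all sufficiently large $n$), giving local expansions $a_n=\sum_i a_{n,i}\overline R_i$ with $a_{n,i}\in\A(I')$.

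The decisive estimate is that each coefficient sequence $(a_{n,i})_n$ is Cauchy in norm, so that $b_i:=\lim_n a_{n,i}$ exists and lies in $\A(I')$ by norm-closedness of the latter. This uses the explicit recipe $a_{n,i}=\lambda\, E_n(a_n\overline R_i^*)$ with $E_n:\A(K_n)\cap\A(I)'\to\A(K_n\cap I')$ the minimal conditional expectations, combined with the compatibility ${E_m}_{\restriction \A(K_n)\cap\A(I)'}=E_n$ for $m\geq n$ from \cite[Lem.\ 11]{KLM01}, yielding $\|a_{n,i}-a_{m,i}\|\leq \lambda\,(d_{\rho_i})^{1/2}\|a_n-a_m\|\to 0$. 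Norm continuity of the reconstruction $\sum_i b_i\overline R_i$ then produces the claimed expansion for $a$, and uniqueness is inherited from the local case.

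For the holomorphic assertion, a holomorphic net has only the vacuum sector, so $n=0$ and one may take $\overline R_0=\oneop$; the expansion collapses to $a=b_0\in\Aholo(I')$, giving $\Aholo(I)^c\subset\Aholo(I')$, while the reverse inclusion is immediate from locality. The main obstacle is of a bookkeeping nature rather than a conceptual one: ensuring that the auxiliary choices (the localizations of $\rho_i^1,\overline\rho_i^2$ and the intertwiners $\overline R_i$) can be made uniformly along the exhaustion $K_n$, so that the local expansions are genuinely compatible and the passage to the limit is meaningful. This uniformity is precisely what the compatibility of minimal conditional expectations in \cite[Lem.\ 11]{KLM01} supplies.
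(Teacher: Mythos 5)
Your proposal is correct and follows essentially the same route as the paper: Corollary \ref{cor:reallinetwointerval} is extracted directly from the third step of the proof of Proposition \ref{prop:dualoflocalcat}, namely the approximation $a=\lim_n a_n$ via the split-property conditional expectation, the local harmonic expansions, and the Cauchy estimate on the coefficients $a_{n,i}=\lambda\,E_n(a_n\overline R_i^*)$ using the compatibility of the minimal conditional expectations from \cite[Lem.\ 11]{KLM01}. Your treatment of the holomorphic case ($n=0$, $\overline R_0=\oneop$) also matches the intended argument.
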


\begin{remark}\label{rmk:dualofwedgecat}
Relations analogous to Proposition \ref{prop:dualoflocalcat} hold for half-lines $W\subset\RR$, namely ${\DHR^W\{\A\}}^\perp = \A(W')$ as one can easily show using Proposition \ref{prop:relessduality}. We shall see later a more general argument, see Proposition \ref{prop:4tuple}.
\end{remark}

\section{Local duality relations}\label{sec:localdualityrelations}

We turn now to the local picture, i.e., consider as environment some local algebra $\A(I_0)$ for arbitrarily fixed $I_0\in\cI$ instead of the quasilocal algebra $\A$. Similarly to (\ref{eq:dualpar}) we consider the \emph{local} duality pairing
\begin{equation}\label{eq:localdualpar}\A(I_0)\,\stackrel{\perp}{\longleftrightarrow}\,\DHR^{I_0}\{\A\}.\end{equation}
The local version of all the statements we made in Section \ref{sec:dualityrelations} follows analogously, thanks to
strong additivity, by considering \emph{local} interval algebras
$\A(I)\subset \A(I_0)$ if $I\Subset I_0$, $I\in\cI$, and \emph{local}
half-line algebras $\A(I_1)\subset\A(I_0)$ if $I_1=W\cap I_0$,
$W\subset\RR$ is any half-line with origin $p\in I_0$. 

In the following the symbol $^\perp$ will refer to (\ref{eq:localdualpar}). Similarly to the notion of relative commutant for unital inclusions of algebras, i.e., $\N^c = \N'\cap \A(I_0)$ if $\N\subset\A(I_0)$, we introduce relative commutants of subcategories

\begin{definition}\label{def:relcommsubcat}
Let $\C\subset \DHR^{I_0}\{\A\}$ be a unital full inclusion of tensor categories, we define the \textbf{relative commutant} as 
$$\C^c := \big\{\rho\in\DHR^{I_0}\{\A\} : \rho\,\sigma = \sigma\rho,\,\sigma\in\C\big\}$$ 
where the equality sign means pointwise equality as endomorphisms of $\A(I_0)$, or equivalently of $\A$. We define $\C^c\subset \DHR^{I_0}\{\A\}$ as a full subcategory, i.e., $\Hom_{\C^c}(\rho,\sigma) := \Hom_{\DHR\{\A\}}(\rho,\sigma)$ for every $\rho,\sigma\in\C^c$.
\end{definition}

$\C^c$ is automatically a unital tensor category of endomorphisms of $\A(I_0)$. Now combining relative commutants and duals, given a subalgebra $\N\subset\A(I_0)$ we define a unital tensor full subcategory $\C_\N \subset \DHR^{I_0}\{\A\}$ as
$$\C_\N := {\N^c}^\perp$$
where by definition $\Hom_{\C_\N}(\rho,\sigma) = \Hom_{\DHR\{\A\}}(\rho,\sigma)$ for every $\rho,\sigma\in\C_\N$.

\begin{remark}\label{rmk:C_Nareglobal}
Despite we use the term \lq\lq local"  for the duality pairing
(\ref{eq:localdualpar}) and for the respective subcategories of
$\DHR^{I_0}\{\A\}$ defined as above, it should be kept in mind that both
$\C_\N$ and $\DHR^{I_0}\{\A\}$ are categories of globally defined endomorphisms
of the quasilocal algebras $\A$, which then are \lq\lq localizable" in
smaller regions, e.g., $I_0$, i.e., act trivially on every local algebra
$\A(J)$, $J\subset I_0'$ and on $\N^c$.
\end{remark}

Summarizing the previous results, we have 
\begin{corollary}\label{cor:honestpoints}
Let $p\in I_0$ and $I_0\smallsetminus\{p\}=I_1\cup I_2$. Let $\N:=\A(I_1)$,
then $\N^c=\A(I_2)$, $\C_\N=\DHR^{I_1}\{\A\}$, $\C_{\N^c}=\DHR^{I_2}\{\A\}$. 
Moreover, if $I_1$ is to the left of $I_2$, then $\eps_{\rho,\sigma} = \oneop$ whenever $\rho\in\C_\N$, $\sigma\in\C_{\N^c}$.
\end{corollary}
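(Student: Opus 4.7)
The corollary bundles four claims which I would dispatch in order, using the structure results already assembled.

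First, $\N^c=\A(I_2)$ is precisely the local Haag duality identity \eqref{eq:geomlocrelcomm} at the point $p\in I_0$, valid under complete rationality.

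Next, for $\C_\N=\DHR^{I_1}\{\A\}$, I would unfold the definition to $\C_\N={\N^c}^\perp=\A(I_2)^\perp$ and establish both inclusions inside $\DHR^{I_0}\{\A\}$. One direction is immediate since $I_2\subset I_1'$, so any $\rho$ localized in $I_1$ is automatically trivial on $\A(I_2)$. For the converse, given $\rho\in\DHR^{I_0}\{\A\}$ trivial on $\A(I_2)$, I note that localization in $I_0$ forces $\rho$ to be trivial also on $\A(I_0')$. The key step is then to observe that strong additivity (the equivalent form (a)$'$ of Haag duality on $\RR$, applied to bounded intervals that straddle the right endpoint of $I_0$) exhibits $\A(I_1')$ as the \Cstar-algebra generated by $\A(I_0')$ and $\A(I_2)$. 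Triviality on $\A(I_1')$ then follows, hence $\rho\in\DHR^{I_1}\{\A\}$. The symmetric statement $\C_{\N^c}=\DHR^{I_2}\{\A\}$ follows from the same argument, using the normality relation \eqref{eq:normconorm} to identify $(\N^c)^c=\N$.

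Finally, the braiding triviality is a direct application of the definition of the DHR braiding recalled in Section \ref{sec:CFTsandPtsonRR}: when $\rho\in\DHR^{I_1}\{\A\}$ is already localizable to the left of $\sigma\in\DHR^{I_2}\{\A\}$, the auxiliary charge transporters in the formula for $\eps_{\rho,\sigma}$ can be taken as $u=v=\oneop$, giving $\eps_{\rho,\sigma}=\oneop$ on the nose.

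The only place that requires a moment's care is the \Cstar-generation of $\A(I_1')$ by its two unbounded complementary pieces, but this is a routine application of additivity plus strong additivity at $\sup I_0$. I therefore do not expect any genuine obstacle in the proof.
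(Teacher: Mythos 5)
Your proposal is correct and follows essentially the route the paper intends: the corollary is stated there as a summary of the preceding results, resting on Haag duality/strong additivity for $\N^c=\A(I_2)$, the local version of Proposition \ref{prop:dualofwedge} (which, as the paper notes, requires exactly the strong-additivity step at the endpoints of $I_0$ that you spell out) for $\C_\N=\DHR^{I_1}\{\A\}$ and $\C_{\N^c}=\DHR^{I_2}\{\A\}$, and the choice $u=v=\oneop$ in the definition of the DHR braiding for the trivialization. Your explicit verification that $\A(I_1')$ is generated by $\A(I_0')$ and $\A(I_2)$ is precisely the detail the paper leaves implicit, and it is carried out correctly.
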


\begin{remark}\label{rem:points} It is well known that a
point as the localization of an observable is an over-idealization,
forcing fields to be distributions, and making the intersections of
local algebras corresponding to regions intersecting at a point
trivial. In contrast, the proper way of ``lifting'' points to
quantum field theory rather seems to be their role as separators
between local algebras, trivializing the braiding as in Corollary \ref{cor:honestpoints}.
\end{remark}

\section{Abstract points}\label{sec:apts}

Let $\{\A\}$ be a completely rational conformal net on the line (Definition \ref{def:RCFTline}). In the previous two sections we essentially used the action of the $\DHR$ category, and its abstract structure of UMTC. Now we employ the $\DHR$ braiding as well, see equation (\ref{eq:QFTbraiding}) and comments thereafter, hence the braided action (Definition \ref{def:braction}) given by the \emph{restriction functor}
$$\C:=\DHR^{I_0}\{\A\}\hookrightarrow\End(\M_0)$$
where $\M_0 := \A(I_0)$ and $I_0\in\cI$ is an arbitrarily fixed interval. 

\begin{definition}\label{def:apts}
We call \textbf{abstract point of $\M_0$} an ordered pair of algebras $(\N,\N^c)$ where $\N\subset\M_0$ such that
\begin{itemize} \itemsep0mm
\item [(i)] $\N$ and $\N^c$ are injective type $\III_1$ factors.
\item [(ii)]$\N = \N^{cc}$ and $\N\vee\N^c = \M_0$.
\item [(iii)] $\C_\N \simeq \C$ and $\C_{\N^c} \simeq \C$ as UBTCs.
\item [(iv)] $\eps_{\rho,\sigma} = \oneop$ whenever $\rho\in\C_\N$, $\sigma\in\C_{\N^c}$.
\end{itemize}
With abuse of notation we denote abstract points by $p := (\N,\N^c)$, and call $\N$, $\N^c$ respectively the \textbf{left}, \textbf{right relative complement of $p$ in $\M_0$}.
\end{definition}

More generally, given an \lq\lq abstract'' UMTC $\C$ together with a braided action on the injective type $\III_1$ factor $\M$, see Definition \ref{def:braction} and Remark \ref{rmk:yamembedd}, we can analogously define abstract points of $\M$ (with respect to the braided action $\C\hookrightarrow\End(\M)$).
In the case of a UMTC coming from a completely rational conformal net, $\C = \DHR^{I_0}\{\A\}$ together with its \emph{canonical} braided action on $\M_0$, the existence of those is the content of the previous sections.

\begin{remark}
Condition (iii) is indeed equivalent to essential surjectivity of the inclusion functors $\C_\N \subset \C$ and $\C_{\N^c} \subset \C$. In fact $\C_\N \subset \C \subset \DHR\{\A\}$ are full inclusions by definition, the latter also essentially surjective, and the inclusion functor is trivially unitary strict tensor and braided.
\end{remark}

\begin{remark}\label{rmk:finmanyeqs}
Condition (iv) consists a priori of uncountably many constraints on braiding operators. We shall see in Proposition \ref{prop:naturalityiefinmanyeqs} that it is indeed equivalent to a finite system of equations. This makes (iv) a more tractable (\lq\lq rational") condition.
\end{remark}

\begin{remark}\label{rmk:hpts}
From Corollary \ref{cor:honestpoints} we know that ordered pairs of local
algebras $(\A(I_1),\A(I_2))$, associated respectively to the left
and right relative complements $I_1, I_2$ of some $p\in I_0$, are
also abstract points of $\M_0 = \A(I_0)$. We shall refer to them as
\emph{honest} points of $\M_0$ (with respect to the net
$\{\A\}$). The converse is not true in general, see in Sections \ref{sec:fuzzyapts} and \ref{sec:primeCFTline}.
\end{remark}

At the level of generality of Definition \ref{def:apts} we can show the following

\begin{proposition}\label{prop:4tuple}
Let $p=(\N,\N^c)$ be an abstract point of $\M_0$, then the quadruple $(\N,\N^c,\C_\N,\C_{\N^c})$ is uniquely determined by any one of its elements.
\end{proposition}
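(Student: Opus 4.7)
The plan is to traverse the four entries in turn, showing that each determines the other three. By the obvious symmetry of Definition \ref{def:apts} under the involution swapping $\N \leftrightarrow \N^c$ and $\C_\N \leftrightarrow \C_{\N^c}$, it suffices to treat the starting points $\N$ and $\C_\N$.

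\emph{From $\N$.} The relative commutant $\N^c = \N' \cap \M_0$ is intrinsic to the inclusion $\N \subset \M_0$, and the categorical entries follow at once from the definition of the local duality pairing: $\C_\N = (\N^c)^\perp$ by definition, and $\C_{\N^c} = (\N^{cc})^\perp = \N^\perp$, where the last equality uses condition (ii) of Definition \ref{def:apts}. The mirror case ``from $\N^c$'' is identical, with (ii) used at the very first step to recover $\N = \N^{cc}$.

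\emph{From $\C_\N$.} It suffices to recover $\N^c$, since the previous paragraph then yields all remaining entries. The claim is
$$(\C_\N)^\perp = \N^c.$$
The inclusion $\N^c \subset (\C_\N)^\perp$ is immediate from $\C_\N = (\N^c)^\perp$ and the tautological $X \subset X^{\perp\perp}$ of any Galois-type pairing. The opposite inclusion is the only step that requires an actual argument. The key observation is that every inner automorphism $\Ad_u$ implemented by a unitary $u \in \cU(\N)$ lies in $\C_\N$: indeed, $u \in \N \subset \M_0 = \A(I_0)$, so $\Ad_u \in \Aut(\A)$ acts trivially on $\A(J)$ for every $J \subset I_0'$ by locality and therefore belongs to $\DHR^{I_0}\{\A\}$; moreover $u \in \N$ commutes with $\N^c = \N' \cap \M_0$ by construction, so $\Ad_u$ fixes $\N^c$ pointwise and thus sits in $(\N^c)^\perp = \C_\N$. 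Consequently, any $a \in (\C_\N)^\perp$ satisfies $u a u^* = a$ for every $u \in \cU(\N)$, and since $\N$ is generated as a von Neumann algebra by its unitaries, this forces $a \in \N' \cap \M_0 = \N^c$, as required. The case ``from $\C_{\N^c}$'' is then obtained by the same symmetry.

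I do not foresee any substantive obstacle: the whole argument rests on the definitions of the perp pairing and of the relative commutant, together with condition (ii) of an abstract point. Conditions (i), (iii) and (iv) of Definition \ref{def:apts} are not needed here; they will become essential only in the later sections that compare abstract points with genuine geometric points.
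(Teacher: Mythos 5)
Your proof is correct and follows essentially the same route as the paper's: the paper likewise reduces to showing that one categorical entry determines one algebra via the biduality $\N^{\perp\perp}=\N$, proved by noting that inner automorphisms $\Ad_u$ for unitaries $u$ in the relevant relative commutant lie in the dual category and that unitaries span the von Neumann algebra. The only cosmetic difference is that the paper verifies $(\C_{\N^c})^\perp=\N$ while you verify $(\C_\N)^\perp=\N^c$; these are interchanged by the symmetry you already invoke, and your observation that only condition (ii) of Definition \ref{def:apts} is needed matches the paper's usage.
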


\begin{proof}
It is sufficient to show that $\C_{\N^c}$ determines $\N$. By definition ${\C_{\N^c}}^\perp = {\N^{cc}}^{\perp\perp} = {\N}^{\perp\perp}$ holds and the inclusion $\N\subset{\N}^{\perp\perp}$ is trivial. The opposite inclusion also holds for algebras of the form $\N = \cP^c$, where $\cP\subset\M_0$ is any unital \Cstar-subalgebra of $\M_0$, cf.\ \cite[Sec. 5]{Dop82}, in our case $\cP = \N^c$. Let $a\in\N^{\perp\perp}$ and consider the unitary group $\cU(\cP)$, then $\Ad_u\in\N^\perp$ for all $u\in\cU(\cP)$ hence $\Ad_u(a)=a$ and we conclude $a\in\cU(\cP)'$. Now $\cU(\cP)$ linearly spans $\cP$, hence $a\in\M_0\cap\cP' = \cP^c = \N$.
\end{proof}

The gain in considering together pairs of subfactors or pairs of subcategories is that we can use the braiding operators between endomorphisms as a remnant of their localization properties (left/right separation) hence, dually, of the net. The first interesting consequence of Definition \ref{def:apts} is however the following

\begin{proposition}\label{prop:abstractresfun}
Let $(\N,\N^c)$ be a pair of subfactors of $\M_0$ fulfilling
conditions (i) and (ii) in the Definition \ref{def:apts} of abstract
points.
 
If we consider for instance $\N\subset\M_0$ and the associated $\C_\N\subset \C$, we have
\begin{itemize} \itemsep0mm
\item if $\rho\in\C_\N$ then $\rho\in\End(\N)$.
\item if $t\in\Hom_{\C_\N}(\rho,\sigma)$ where $\rho,\sigma\in\C_\N$, then $t\in\N$.
\item if $t\in\N$ and $t \rho(n) = \sigma(n) t$ for all $n\in\N$ where $\rho,\sigma\in\C_\N$, then $t\in\Hom_{\C_\N}(\rho,\sigma)$.
\end{itemize}
In other words, we have a well-defined, faithful and full restriction functor $\rho\mapsto\rho_{\restriction\N}$
$$\C_\N \hookrightarrow \End(\N).$$
\begin{itemize} \itemsep0mm
\item if $\rho\in\C_\N$ and $u\in\cU(\N)$ then $\Ad_u\rho\in\C_\N$.
\end{itemize}
Hence the restriction functor has replete image, i.e., it is
specified by its sectors (unitary isomorphism classes of objects) only.
\end{proposition}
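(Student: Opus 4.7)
The plan is to exploit, for each of the four bulleted claims, the combination of condition (ii) in Definition \ref{def:apts} --- the bicommutant relation $\N = \N^{cc}$ and the generation $\N \vee \N^c = \M_0$ --- with the defining property of $\C_\N = {\N^c}^\perp$, namely that every $\rho \in \C_\N$ acts as the identity on $\N^c$. The core observation, used repeatedly, is that $\rho(n')=n'$ lets one transport commutation relations with $\N^c$ through $\rho$ without distortion.

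For the first item ($\rho(\N)\subset\N$), I would apply $\rho$ to the identity $nn' = n'n$ for $n \in \N$, $n'\in\N^c$; since $\rho(n')=n'$, this gives $\rho(n)n' = n'\rho(n)$. Because $\rho$ is localized in $I_0$, Haag duality forces $\rho(\M_0)\subset\M_0$, so $\rho(n)\in\M_0\cap(\N^c)' = (\N^c)^c = \N$ by (ii). The same commutation trick, applied to an intertwiner $t\in\Hom_{\C_\N}(\rho,\sigma)$, settles the second item: by the Guido--Longo fullness of the restriction $\C\hookrightarrow\End(\M_0)$ recalled just after (\ref{eq:resfun}), every such $t$ lies in $\M_0$, and the identity $tn' = t\rho(n') = \sigma(n')t = n't$ for $n'\in\N^c$ places $t$ in $(\N^c)^c = \N$.

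For the third item (fullness), given $t\in\N$ with $t\rho(n)=\sigma(n)t$ for all $n\in\N$, I would observe that $t$ automatically intertwines $\rho$ and $\sigma$ on $\N^c$ too: both restrict to the identity on $\N^c$, and $t\in\N=(\N^c)^c$ commutes with $\N^c$, so $t\rho(n')=tn'=n't=\sigma(n')t$. Hence $t$ intertwines $\rho|_{\M_0}$ and $\sigma|_{\M_0}$ on the generating set $\N\cup\N^c$ for $\M_0$, and thus on all of $\M_0$; Guido--Longo fullness then promotes $t$ to a global DHR intertwiner, placing it in $\Hom_{\C_\N}(\rho,\sigma)$. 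Faithfulness is automatic, because by the second item the morphism spaces of $\C_\N$ are already subspaces of $\N$, so restriction is literally the identity on Hom-spaces.

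For the fourth item (replete image), take $u\in\cU(\N)\subset\cU(\M_0)$. Then $\Ad_u\rho$ remains localized in $I_0$ (hence lies in $\C$), and for $n'\in\N^c$ one has $\Ad_u\rho(n')=un'u^*=n'$ since $u\in\N=(\N^c)^c$ commutes with $\N^c$ and $\rho(n')=n'$. Thus $\Ad_u\rho\in\C_\N$, which is exactly closure under unitary isomorphism in $\End(\N)$. The only real subtlety is the double use of (ii): once to identify $(\N^c)'\cap\M_0$ with $\N$ (needed in items 1, 2, 4) and once to reduce agreement on $\M_0$ to separate agreement on $\N$ and on $\N^c$ (needed in item 3); everything else is a bookkeeping exercise given the already-established properties of the restriction functor on $\C = \DHR^{I_0}\{\A\}$.
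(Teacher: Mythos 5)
Your argument is correct and coincides step for step with the paper's proof: the same commutation trick $\rho(n)m=\rho(nm)=m\rho(n)$ combined with $\N^{cc}=\N$ for the first two items, the same reduction to the generating set $\N\cup\N^c$ for fullness, and the same observation that $u\in\cU(\N)$ commutes with $\N^c$ for repleteness. The one point the paper spells out that you pass over silently is the step from intertwining on the $*$-algebra generated by $\N\cup\N^c$ to intertwining on all of $\M_0=\N\vee\N^c$, which needs an ultraweak limit and hence the automatic normality of $\rho,\sigma$ on the non-type-$I$ factor $\M_0$ with separable Hilbert space, cf.\ \cite[p.~352]{Tak1}.
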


\begin{proof}
First, take $\rho\in\C_{\N} = {\N^c}^\perp$ and $n\in\N$, then $\rho(n)m=\rho(nm)=m\rho(n)$ for all $m\in\N^c$ and we get $\rho(n)\in\M_0\cap\N^{c \,\prime} = \N^{cc} = \N$.

Second, take $t\in\M_0$ such that $t\rho(a) = \sigma(a)t$ for all $a\in\M_0$, where $\rho,\sigma\in\C_\N$. Now, letting $a\in\N^c$ we have $ta = at$ hence $t=\N^{cc} = \N$.

Third, we have $t\in\N$ and $t \rho(n) = \sigma(n) t$ if $n\in\N$ by definition and $t \rho(m) = \sigma(m) t$ if $m\in\N^c$ because $tm=mt$. Now, every $a\in\M_0=\N\vee\N^c$ can be written as an ultra-weak limit of finite sums $a=\text{\emph{uw-}}\lim \sum_i n_i m_i$ where $n_i\in\N$ and $m_i\in\N^c$. Also, $\rho,\sigma$ are automatically normal on $\M_0$, see \cite[p. 352]{Tak1}, being $\M_0$ non-type $I$ and $\Hilb$ separable. 
Normality on $\M_0 = \A(I_0)$ can also be derived by $\DHR$ transportability of the endomorphisms, but we prefer the previous argument which is intrinsic and local. From these two facts we conclude that $t\rho(a) = \sigma(a) t$ for all $a\in\M_0$, hence as $\DHR$ endomorphisms because local intertwiners are global, i.e., $\C\hookrightarrow\End(\M_0)$ is full.

The last point is trivial to show, but has interesting consequences
(see Proposition \ref{prop:rigidityfusion}).
\end{proof}

The conditions stated in Definition \ref{def:apts} contain many redundancies. Out of the operator algebraic assumptions (i) and (ii) on $\N$ and $\N^c$, one can derive properties of their dual categories $\C_\N$ and $\C_{\N^c}$ which are custom assumptions in \Cstar tensor category theory, see, e.g., \cite{LoRo97}. Nevertheless, assumptions (iii) and (iv) cannot be derived from the previous, see Proposition \ref{prop:dualofwedge} and \ref{prop:dualofsergio}, unless the net $\{\A\}$ is holomorphic.

\begin{proposition}\label{prop:rigidityfusion}
Let $(\N,\N^c)$ be a pair of subfactors of $\M_0$ fulfilling
conditions (i) and (ii) in the Definition \ref{def:apts} of abstract
points. Then the subcategories $\C_\N$ and $\C_{\N^c}$ automatically
have irreducible tensor unit, subobjects, finite direct sums and
conjugate objects.
 
In other words, they are \Cstar tensor categories which are also fusion and rigid.
\end{proposition}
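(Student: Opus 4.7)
My plan is to verify each of the four structural properties in turn, for $\C_\N$ (the argument for $\C_{\N^c}$ being symmetric) using only conditions (i) and (ii).

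Irreducibility of the tensor unit is immediate from fullness: $\End_{\C_\N}(\id)=\End_{\DHR\{\A\}}(\id)=\CC$ by irreducibility of the vacuum. For subobjects, I take $\rho\in\C_\N$ and a projection $e\in\End_{\DHR\{\A\}}(\rho)$. By Proposition \ref{prop:abstractresfun}, $e\in\N$; since $\N$ is injective type $\III_1$, there exists an isometry $v\in\N$ with $vv^*=e$. Setting $\sigma:=\Ad_{v^*}\circ\rho$ and using that $v\in\N$ commutes with $\N^c$, one computes $\sigma(m)=v^*\rho(m)v=v^*mv=m$ for every $m\in\N^c$, so $\sigma\in\C_\N$ realizes the subobject and $v\in\Hom_{\C_\N}(\sigma,\rho)$ is the corresponding isometry. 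Direct sums are built analogously: for $\rho_1,\rho_2\in\C_\N$, pick orthogonal isometries $v_1,v_2\in\N$ with $v_1v_1^*+v_2v_2^*=\oneop$ (available in a type $\III_1$ factor) and set $\tau(a):=v_1\rho_1(a)v_1^*+v_2\rho_2(a)v_2^*$; then $\tau(m)=(v_1v_1^*+v_2v_2^*)m=m$ for $m\in\N^c$, so $\tau\in\C_\N$ is a direct sum. Finiteness of the number of inequivalent simple objects in $\C_\N$ is inherited from complete rationality of $\{\A\}$.

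For conjugates I plan to invoke the following general categorical fact: any full tensor subcategory of a unitary fusion category which contains the tensor unit and is closed under subobjects and finite direct sums is itself a fusion subcategory, and in particular rigid (cf.\ \cite[Sec.\ 4.11]{EGNO15}). Since $\DHR\{\A\}$ is fusion by complete rationality and $\C_\N$ satisfies the hypotheses by the preceding paragraph, this yields rigidity of $\C_\N$ and places conjugates of objects in $\C_\N$ back inside $\C_\N$.

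The main obstacle is precisely this appeal to rigidity of fusion subcategories. A more hands-on alternative would construct the conjugate explicitly: restrict $\rho\in\C_\N$ to a finite-index normal endomorphism $\rho_{\restriction\N}\in\End(\N)$ via Proposition \ref{prop:abstractresfun}, take the Longo conjugate $\overline{\rho_{\restriction\N}}\in\End(\N)$ together with intertwiners $R,\bar R\in\N$ solving the conjugate equations, and lift to $\End(\M_0)$ via a W*-tensor product decomposition $\M_0\cong\N\,\overline{\otimes}\,\N^c$, setting $\bar\rho:=\overline{\rho_{\restriction\N}}\,\overline{\otimes}\,\id_{\N^c}$; since $\rho$ itself splits as $\rho_{\restriction\N}\,\overline{\otimes}\,\id_{\N^c}$ (as $\rho$ fixes $\N^c$), the intertwiners $R,\bar R\in\N\subset\M_0$ solve the conjugate equations for $(\rho,\bar\rho)$ in $\End(\M_0)$, and repleteness of the restriction functor $\DHR^{I_0}\{\A\}\hookrightarrow\End(\M_0)$ together with unitary equivalence of $\bar\rho$ with the DHR conjugate of $\rho$ places $\bar\rho$ inside $\C_\N$. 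However, the tensor product decomposition is essentially the split property of the pair $(\N,\N^c)$ inside $\M_0$, which is not evidently guaranteed by (i) and (ii) alone, so the abstract categorical route appears more reliable.
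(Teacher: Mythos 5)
Your treatment of the tensor unit, subobjects and direct sums follows the paper's argument essentially step for step, but you skip one point that should not be skipped: after defining $\sigma:=\Ad_{v^*}\circ\rho$ (respectively the direct sum $\tau$) you must still verify that it is an \emph{object of} $\C=\DHR^{I_0}\{\A\}$, since $\C_\N={\N^c}^\perp$ is by definition a subcategory of the DHR category, not of all of $\End(\M_0)$; checking $\sigma(m)=m$ for $m\in\N^c$ is not enough. The paper closes this by using that $\C$ itself has subobjects: there is $\tau\in\C$ and an isometry $w\in\Hom_{\C}(\tau,\rho)$ with $ww^*=e$, so $w^*v$ is a unitary in $\Hom_{\End(\M_0)}(\sigma,\tau)$ and $\sigma\in\C$ by repleteness of the restriction functor $\C\hookrightarrow\End(\M_0)$. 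This is a one-line repair, but without it the assertion ``$\sigma\in\C_\N$'' is unjustified.

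For conjugates your route is genuinely different from the paper's, and it is legitimate. The paper constructs $\rhobar$ by hand: it proves $\Ind(\rho(\N),\N)<\infty$ via the Pimsner--Popa bound, builds $\rhobar=(\rho_{\restriction\N})^{-1}\circ\gamma$ from the canonical endomorphism of $\rho(\N)\subset\N$, and --- this is exactly where your discarded ``hands-on alternative'' would break down --- extends $\rhobar$ from $\N$ to $\M_0$ not through a splitting $\M_0\cong\N\,\overline{\otimes}\,\N^c$ (which indeed fails already for honest points, i.e.\ touching half-intervals) but through the formula $\gamma(\cdot)=\lambda d_\rho^{-1}E(\overline R\,\cdot\,\overline R^*)$, which is defined on all of $\M_0$ and manifestly fixes $\N^c$; repleteness then places $\rhobar$ in $\C_\N$. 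Your appeal to the fact that a full tensor subcategory of a fusion category closed under subobjects and finite direct sums is automatically closed under duality (\cite[Sec.\ 4.11]{EGNO15}; at bottom the statement that a unital based subring of a finite-rank based ring is closed under the involution) is a correct and much shorter substitute: $\C$ is a unitary fusion category by complete rationality, the fact is applied at the level of isomorphism classes of simples so the non-repleteness of $\C_\N$ in $\C$ is harmless, and since $\C_\N$ is full, any solution of the conjugate equations in $\C$ is already a morphism of $\C_\N$. What the shortcut does not deliver is the quantitative by-product of the paper's construction --- the identity $d_\rho^2=\Ind(\rho(\N),\N)$ and the explicit localized form of $\rhobar$, $R$, $\overline R$ --- but none of this is required by the statement as formulated.
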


\begin{proof}
The restriction functor $\C_\N \hookrightarrow \End(\N)$ is full and
faithful by Proposition \ref{prop:abstractresfun}, hence
irreducibility of the tensor unit of $\C_\N$ is equivalent to
factoriality of $\N$. 

In general the existence of subobjects in $\DHR\{\A\}$ follows because
we have a \emph{net} of type $\III$ factors, i.e., $\A(I_0)$ alone
being type $\III$ is not sufficient to construct $\DHR$
subendomorphisms. In our case we need again Proposition
\ref{prop:abstractresfun} together with $\N$ being type $\III$. Let
$\rho\in\C_\N$ and $e\in \Hom_{\C_\N}(\rho,\rho)\subset\N$ a
non-zero orthogonal projection. Choose $v\in\N$ such that
$v^*v = \oneop$, $vv^* = e$ and let $\sigma(n) := v^* \rho(n) v$,
$n\in\N$, then $\sigma\in\End(\N)$ by definition. In order to show
$\sigma\prec\rho$ in $\C_\N$ we need to extend $\sigma$ to $\M_0$ and
then to the quasilocal algebra $\A$, in such a way that the intertwining
relation $v\in\Hom_{\C_\N}(\sigma,\rho)$ holds, cf.\ Remark \ref{rmk:C_Nareglobal}. Now $\sigma(m) :=
v^*\rho(m)v = m$, $m\in\N^c$, and $\rho$ is normal on $\M_0$ hence
$\sigma$ extends to $\End(\M_0)$ with $\sigma_{\restriction \N^c} =
\id$ and $v\in\Hom_{\End(\M_0)}(\sigma,\rho)$. On the other hand
$\rho\in\C$ and $\C$ has subobjects, hence let $w\in\M_0$ and
$\tau\in\C$ such that $w^*w = \oneop$, $ww^* = e$ and
$w\in\Hom_{\C}(\tau,\rho) = \Hom_{\End(\M_0)}(\tau,\rho)$. Now $w^*v $
is unitary in $\Hom_{\End(\M_0)}(\sigma,\tau)$ hence we can extend
$\sigma\in\C$ because $\C\hookrightarrow\End(\M_0)$ is replete. Thus
$\sigma\in\C_\N$ and $v\in\Hom_{\C_\N}(\sigma,\rho)$ because
$\C_\N\hookrightarrow\End(\N)$ is full. 

Along similar lines one can show the existence of direct sums in $\C_\N$.

To show existence of conjugates in $\C_\N$ we need, in addition,
results from the theory of infinite subfactors with finite index. Let
$\rho\in\C_\N$ be an irreducible $\DHR$ endomorphism, hence with
finite (minimal) index $\Ind(\rho(\M_0),\M_0) < \infty$ \cite[Cor.\
39]{KLM01}, i.e., finite statistical dimension $d_\rho < \infty$
\cite[Cor.\ 3.7]{GuLo96}. Let $\Phi$ be the unique left inverse of
$\rho$, see \cite[Cor.\ 2.12]{GuLo96}, which is normal on $\M_0$ and
localizable in $I_0$, hence in particular $\Phi(\M_0) \subset
\M_0$. For every $n\in\N$, $m\in\N^c$ we have $\Phi(m) = \Phi(\rho(m))
= m$ and $\Phi(n)m = \Phi(n\rho(m)) = \Phi(nm) = m\Phi(n)$ hence
$\Phi_{\restriction\N^c} = \id$ and $\Phi(\N)\subset\N^{cc} = \N$.
 
Again by Proposition \ref{prop:abstractresfun}, irreducibility of $\rho$ is equivalent to irreducibility of the subfactor $\rho(\N)\subset\N$, then $E_{\restriction\N}:=\rho\circ\Phi_{\restriction\N}$ coincides with the unique normal faithful (minimal) conditional expectation given by \cite[Thm.\ 5.5]{Lon89}.
After setting $\lambda := \Ind(\rho(\M_0),\M_0)^{-1}$, we have the Pimnser-Popa bound \cite[Thm.\ 4.1]{Lon89} 
\begin{equation}\label{eq:PiPobound} E(a^*a) \geq \lambda a^*a, \quad a\in\M_0  \end{equation}
where $\lambda$ is the best constant fulfilling equation (\ref{eq:PiPobound}).  
In particular, it holds for all $a\in\N\subset\M_0$ and if we let $\mu
:= \Ind(\rho(\N),\N)^{-1}$ by the same argument on $\rho(\N)\subset\N$
and by uniqueness of $E_{\restriction\N}$
we get $\mu\geq\lambda$, hence $\Ind(\rho(\N),\N) < \infty$. 

Now we turn to the construction of the conjugate endomorphism of
$\rho$ in $\C_\N$. As before we begin \lq\lq locally'', i.e., by
construction of the restriction of the conjugate as an object of $\End(\N)$, and then extend. Let $\rho_\N := \rho_{\restriction\N}\in\End(\N)$ and $\rhobar := (\rho_\N)^{-1} \circ \gamma\in\End(\N)$ where $\gamma$ is a canonical endomorphisms of $\N$ into $\rho(\N)$ \cite[Thm.\ 3.1]{Lon90}. By finiteness of the index of $\rho(\N)\subset\N$ \cite[Thm.\ 4.1 and 5.2]{Lon90} we have a solution $R\in\Hom_{\End(\N)}(\id,\rhobar\rho_\N)$, $\overline{R}\in\Hom_{\End(\N)}(\id,\rho_\N\rhobar)$ of the conjugate equations \cite[Sec. 2]{LoRo97} in $\End(\N)$. First, we extend $\rhobar$ to $\M_0$ by making use of another formula for the canonical endomorphism \cite[Eq.\ (2.19)]{LoRe95}
\begin{equation}\label{eq:anothercanendo} \gamma(n) = \lambda d_\rho^{-1}  E(\overline{R} n \overline{R}^*),\quad n\in\N. \end{equation}
By (\ref{eq:anothercanendo}) $\gamma$ extends normally to $\M_0$ and
to the quasilocal algebra $\A$. Also, for $m\in\N^c$ we get $\gamma(m) =
\lambda d_\rho^{-1}  E(\overline{R} m \overline{R}^*) = \lambda
d_\rho^{-1}  E(\overline{R} \overline{R}^*) m = m$ by
\cite[Eq.\ (4.1)]{LoRe95}, hence $\gamma_{\restriction \N^c} = \id$ and
$\gamma(\M_0) \subset \rho(\M_0)$. It follows that we can extend
normally $\rhobar := \rho^{-1}\circ\gamma\in\End(\M_0)$ because $\rho$ is
injective hence bicontinuous onto its image in the ultraweak topology
\cite[p. 59]{Ped79}. Moreover we have $\rhobar_{\restriction \N^c} =
\id$ and $R\in\Hom_{\End(\M_0)}(\id,\rhobar\rho)$,
$\overline{R}\in\Hom_{\End(\M_0)}(\id,\rho\rhobar)$.

On the other hand $\rho\in\C$ and let $\tilde\rho\in\C$ be a $\DHR$
conjugate of $\rho$, hence by irreducibility and \cite[Thm.\
3.1]{Lon90} we have a unitary
$u\in\Hom_{\End(\M_0)}(\rhobar,\tilde\rho)$. As above we extend
$\rhobar\in\C$ by repleteness of $\C\hookrightarrow\End(\M_0)$, hence
$\rhobar\in\C_\N$ together with $R\in\Hom_{\C_\N}(\id,\rhobar\rho)$,
$\overline{R}\in\Hom_{\C_\N}(\id,\rho\rhobar)$, and we have the
statement in the irreducible case.

Now $R, \overline R$ can be normalized in such a way $R^*R = \overline
R^* \overline R$ gives the (intrinsic) dimension of $\rho$ in
$\C_{\N}$. The latter does not depend on the choice of normalized
solutions in $\C$, and equals the statistical dimension $d_\rho$ on
one side and $\Ind(\rho(\N),\N)^{1/2}$ on the other by
\cite[p. 121]{LoRo97}. In particular, it holds $\lambda = \mu$ and
${d_\rho}^2 = \Ind(\rho(\N),\N)$. 
 
The construction of conjugates extends to finite direct sums, concluding the proof of the proposition for $\C_\N$. Similarly for $\C_{\N^c}$ interchanging the roles of $\N$ and $\N^c$.
\end{proof}

\begin{remark}
See \cite[Thm.\ 2.2, Cor.\ 2.4]{GuLo92} for a similar discussion on the
conjugation of endomorphisms of \emph{sub}factors.
\end{remark}

Going back to the duality between subalgebras and subcategories, under assumption (iii) we can lift the normality relations contained in (ii) from $\N,\N^c$ to $\C_\N, \C_{\N^c}$, in the sense of Definition \ref{def:relcommsubcat}.

\begin{proposition}\label{prop:cd}
Let $(\N,\N^c)$ be a pair of subfactors of $\M_0$ fulfilling conditions (i), (ii) and (iii) in the Definition \ref{def:apts} of abstract points. Then 
$$(\C_\N)^c = \C_{\N^c}, \quad (\C_{\N^c})^c = \C_{\N}$$
and the operations in the diagram
\[
\begin{array}{ccc}
  \N & \stackrel{\perp}{\longmapsto} & \C_{\N^c} \\ 
\scriptstyle{c}\longdownmapsto && \longdownmapsto\scriptstyle{c} \\ 
  \N^c & \stackrel{\perp}\longmapsto  &  \C_\N
\end{array}
\]
are commutative and invertible.
\end{proposition}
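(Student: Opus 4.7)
My plan is to prove the single equality $(\C_\N)^c=\C_{\N^c}$; the symmetric equality $(\C_{\N^c})^c=\C_\N$ then follows by exchanging the roles of $\N$ and $\N^c$, which is legitimate because (i), (ii) and (iii) are invariant under this swap, in view of $\N^{cc}=\N$ from (ii). Commutativity and invertibility of the diagram are then automatic from these two equalities combined with $\N^{cc}=\N$ and with the identity $\N^{\perp\perp}=\N$ (the latter established in the proof of Proposition \ref{prop:4tuple}, which only uses (ii), and which applies dually to $\C_{\N^c}^{\perp\perp}=\C_{\N^c}$).

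For the easy inclusion $\C_{\N^c}\subset(\C_\N)^c$, I would fix $\rho\in\C_{\N^c}$ and $\sigma\in\C_\N$. Proposition \ref{prop:abstractresfun}, applied both to $\N\subset\M_0$ and (with roles swapped) to $\N^c\subset\M_0$, yields $\rho(\N^c)\subset\N^c$ with $\rho|_\N=\id_\N$, and $\sigma(\N)\subset\N$ with $\sigma|_{\N^c}=\id_{\N^c}$. A direct substitution shows $\rho\sigma(x)=\sigma\rho(x)$ separately on generators $x\in\N$ and $x\in\N^c$; normality of $\rho,\sigma$ together with $\N\vee\N^c=\M_0$ extends the equality to $\M_0$, and DHR transportability (equivalently, Haag duality) extends it from $\M_0=\A(I_0)$ to all of $\A$. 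Hence $\rho\in(\C_\N)^c$.

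The substantive direction is $(\C_\N)^c\subset\C_{\N^c}$. I would treat irreducible $\rho\in(\C_\N)^c$ first. Note that $\Ad_u\in\C_\N$ for every $u\in\cU(\N)$ because $u$ commutes with $\N^c$, so the relation $\rho\circ\Ad_u=\Ad_u\circ\rho$ rewrites as $u^*\rho(u)\in\rho(\M_0)'\cap\M_0=\End_\C(\rho)$. For irreducible $\rho\in\C$ this self-intertwiner algebra is $\CC\oneop$, hence $\rho(u)=\lambda_u u$ for some $\lambda_u\in\mathbb T$, and multiplicativity of $\rho$ makes $u\mapsto\lambda_u$ a group character of $\cU(\N)$. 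Testing it on a one-parameter subgroup $u=e^{ith}$ with $h=h^*\in\N$ and invoking normality of $\rho$, the identity $\rho(e^{ith})=\lambda_u e^{ith}=e^{it\rho(h)}$ forces $\rho(h)=c_h\oneop+h$ for every self-adjoint $h\in\N$; comparing with $\rho(h^2)=\rho(h)^2$ gives $c_h\equiv 0$, so $\lambda\equiv 1$ and $\rho|_\N=\id_\N$, i.e., $\rho\in\N^\perp=\C_{\N^c}$.

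The main obstacle is the passage from irreducible to reducible $\rho\in(\C_\N)^c$: then $\End_\C(\rho)$ is only finite-dimensional, and the map $u\mapsto u^*\rho(u)$ becomes a twisted $1$-cocycle with values in this algebra rather than a character. I would resolve this by decomposing $\rho$ into irreducibles in $\C$ through isometries in $\M_0$, using condition (iii) to lift matching representatives into $\C_{\N^c}$ via the replete embedding $\C_{\N^c}\hookrightarrow\End(\N^c)$ of Proposition \ref{prop:abstractresfun}, and applying the scalar argument above componentwise so as to reduce the cocycle to a direct sum of trivial characters of $\cU(\N)$.
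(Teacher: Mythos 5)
Your overall architecture coincides with the paper's: the easy inclusion $\C_{\N^c}\subset(\C_\N)^c$ via Proposition \ref{prop:abstractresfun} plus normality and $\M_0=\N\vee\N^c$, and the hard inclusion via the observation that $\Ad_u\in\C_\N$ for $u\in\cU(\N)$, so that $u^*\rho(u)\in\Hom_{\C}(\rho,\rho)$. In the irreducible case you replace the paper's appeal to Kadison's theorem \cite{Kad52} (triviality of norm-continuous characters of $\cU(\N)$ for the non-type-$I$ factor $\N$) by a direct computation on one-parameter subgroups $e^{ith}$, exploiting that the character has the special form $u\mapsto u^*\rho(u)$ with $\rho$ a unital normal $*$-endomorphism: $\rho(h)=c_h\oneop+h$ together with $\rho(h^2)=\rho(h)^2$ forces $c_h=0$ for non-scalar $h$. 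This is correct and somewhat more elementary; it buys independence from \cite{Kad52} at the price of using the multiplicativity of $\rho$, which is of course available here.

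The gap is in the reducible case, and it sits exactly where the paper has to work hardest. After writing $\rho=\sum_i t_i\rho_i(\cdot)t_i^*$ with irreducible $\rho_i\in\C_{\N^c}$ (possible by (iii)) and isometries $t_i\in\M_0$, the relation $u^*\rho(u)\in\Hom_{\C}(\rho,\rho)$ only gives $\Ad_{u^*}(t_j)=\sum_i t_i\,e_{ij}(u)$ with $e_{ij}(u)\in\Hom_{\C}(\rho_j,\rho_i)$. If the $\rho_i$ are pairwise inequivalent these blocks are one-dimensional, each $\CC t_j$ is invariant, and your componentwise character argument applies verbatim. But if some sector occurs in $\rho$ with multiplicity greater than one, the spaces $\Hom_{\C}(\rho_i,\rho)$ carry a genuine higher-dimensional representation of $\cU(\N)$ whose image need not be abelian, so it does not a priori ``reduce to a direct sum of trivial characters''; your sketch provides no mechanism for diagonalizing it. The paper closes this by endowing $\Hom_{(\C_\N)^c}(\rho_i,\rho)$ with a trace inner product, checking (using $\rho_i(u)=u$) that $u\mapsto\Ad_u$ is a norm-continuous \emph{unitary} finite-dimensional representation, and invoking \cite{Kad52} again, now for representations rather than characters, to conclude $\Ad_u(t)=t$ and hence $t_i\in\N'\cap\M_0=\N^c$. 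Your elementary method can in fact be pushed through instead: note $\Hom_{\C}(\rho_j,\rho_i)\subset\N^c$ by Proposition \ref{prop:abstractresfun} applied to $\N^c$, write $[h,t_j]=-\sum_i t_iD^h_{ij}$ with $D^h_{ij}\in\N^c$, and compare with $[h^2,t_j]$ to deduce $hD^h_{ij}\in\N^c$, whence $hD^h_{ij}(D^h_{ij})^*$ is a scalar multiple of $h$ lying in $\N^c$; since $\N\cap\N^c=\CC\oneop$ and $\rho_i$ is irreducible this forces $D^h_{ij}=0$ for non-scalar $h$. Some such argument must be supplied; as written, the reduction step is unjustified.
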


\begin{proof}
Take $\rho\in\C_{\N^c}$ and first assume (iv) in addition, then $\eps_{\sigma,\rho} = \oneop$ for all $\sigma\in\C_\N$ gives in particular $\rho\,\sigma = \sigma\rho$ and we can conclude $\rho\in(\C_\N)^c$. But we want the statement independent of braiding operators, hence we use Proposition \ref{prop:abstractresfun} to draw the same conclusion. Indeed $\rho(\sigma(m)) = \rho(m) = \sigma(\rho(m))$ for all $\sigma\in\C_{\N}$ and $m\in\N^c$, and the same holds for $n\in\N$. As before, by assumption (i) and (ii) we have $\M_0=\N\vee\N^c$ and $\rho,\sigma$ are normal on $\M_0$.
Hence $\rho\,\sigma = \sigma\rho$ for all $\sigma\in\C_{\N}$ and again $\rho\in(\C_\N)^c$.

Viceversa, if $\rho\in(\C_\N)^c$ then in particular $\rho\Ad_u = \Ad_u \rho$ for all $u\in\cU(\N)$, explicitly $\rho(uau^*) = u\rho(a)u^*$ for all $a\in\M_0$.
Then we have $u^*\rho(u) \in \Hom_{\End(\M_0)}(\rho,\rho) = \Hom_{\C}(\rho,\rho)$. If $\rho$ is irreducible, then $u^*\rho(u) = \lambda_u$ where $\lambda_u\in\mathbb{T}$ is a complex phase. 
The map $u\in\cU(\N)\mapsto \lambda_u\in\mathbb{T}$ is a norm
continuous unitary character, hence trivial by \cite[Thm.\ 1]{Kad52}
because $\N$ is a non-type $I$ factor by assumption (i), and we have
$\rho(u) = u$ for all $u\in\cU(\N)$. In this case, we conclude
$\rho\in\N^\perp = \C_{\N^c}$.

In general, if $\rho\in(\C_\N)^c$ is (finitely) reducible, we can
write $\rho$ as a finite direct sum of irreducibles $\rho =
\oplus_{i=1,\ldots,n} \rho_i$ with $\rho_i\in\C_{\N^c}$ by assumption
(iii). Notice that we already have the inclusion $\C_{\N^c} \subset
(\C_\N)^c$. Let $\rho,\sigma\in(\C_\N)^c$ and
$t\in\Hom_{(\C_\N)^c}(\rho,\sigma)$, then one has
$$\Ad_{u}(t) \rho(\Ad_u(a)) = \sigma(\Ad_u(a)) \Ad_u(t)$$
for every $u\in\cU(\N)$, because $\Ad_u\in\C_\N$. But every $\Ad_u$ is an automorphisms of $\M_0$ hence we get $\Ad_u(t)\in\Hom_{(\C_\N)^c}(\rho,\sigma)$ and $u\in\cU(\N) \mapsto \Ad_u$ is a group representation of $\cU(\N)$ on the finite-dimensional vector space $V := \Hom_{(\C_\N)^c}(\rho,\sigma)$, see \cite[Lem.\ 3.2]{LoRo97}.
Now, $V^*V = \Hom_{(\C_\N)^c}(\rho,\rho)$ is isomorphic to a finite-dimensional block-diagonal matrix algebra, e.g., if $n=2$ then $\Hom_{(\C_\N)^c}(\rho_1\oplus\rho_2,\rho_1\oplus\rho_2)$ is either the full matrix algebra $M_2(\CC)\cong \CC^4$ if $\rho_1 \cong \rho_2$ or diagonal matrices $\Lambda_2(\CC) \cong \CC^2$ if $\rho_1 \ncong \rho_2$. Hence we can consider the Hilbert inner product on $V$ given by the (non-normalized) trace of $V^*V$, i.e.
$$(t|s) := \Tr(t^*s) = \sum_{i=1,\ldots,n} t_i^* (t^*s) t_i$$
where $t,s\in V$ and $\{t_1,\ldots,t_n\}\subset\M_0$ is a Cuntz
algebra of isometries defining $\rho = \oplus_i \rho_i$, namely
$t_i^*t_j = \delta_{i,j}$ and $\sum_i t_it_i^* = \oneop$ and
$t_i\in\Hom_{(\C_\N)^c}(\rho_i,\rho)$. The definition of trace does not depend on the choice of $\{t_1,\ldots,t_n\}$ and that matrix units of $V^*V$ form an orthonormal basis of $V^*V$ with respect to the previous inner product. Now, given $t,s\in V$ and $u\in\cU(\N)$ compute
$$(\Ad_u(t)|Ad_u(s)) = \Tr(ut^*su^*) =  \Tr(\rho(u)\rho(u^*)ut^*su^*\rho(u)\rho(u^*))$$
$$= \sum_{i=1,\ldots,n} t_i^* (\rho(u)\rho(u^*)ut^*su^*\rho(u)\rho(u^*)) t_i = u \Tr(\rho(u^*)ut^*su^*\rho(u)) u^*$$
$$= \Tr(t^*s) = (t|s)$$
because $\rho_i(u) = u$, being $\rho_i\in\C_{\N^c}$, and $u^*\rho(u)\in V^*V$ so we can use the trace property. 
Hence the representation of $\cU(\N)$ on $V$ is unitary with respect
to the previous inner product, and norm continuous, as one can easily
check with respect to the induced \Cstar-norm of $V\subset\M_0$ and
then using the equivalence of norms for finite-dimensional vector
spaces. Again by \cite{Kad52} and assumption (i) the representation
must be trivial, i.e., $\Ad_u(t) = t$ for all $u\in\cU(\N)$, hence
$t\in\N'\cap\M_0 = \N^c$ and we have shown
$\Hom_{(\C_\N)^c}(\rho,\sigma) \subset \N^c$.
 
In conclusion, we get that every Cuntz algebra of isometries defining the direct sum $\rho = \oplus_i \rho_i$ lies in $\N^c$, hence we conclude $\rho\in\C_{\N^c}$. Both subcategories $\C_{\N^c}$ and $(\C_\N)^c$ are full by definition, hence they have the same Hom-spaces, and the proof is complete.
\end{proof}

Concerning condition (iv) in Definition \ref{def:apts}, the following shows that the braiding contains \emph{all} the information about the subcategories $\C_\N$, $\C_{\N^c}$ and charge transportation among them.

\begin{lemma}\label{lem:abstrchargetranspo}
Let $p = (\N,\N^c)$ be an abstract point of $\M_0$. Let $\rho\in\C$, then
\begin{itemize} 
\item $\rho\in\C_\N$ if and only if $\eps_{\rho,\Ad_u}=\oneop$ for all $u\in\cU(\N^c)$. 
\end{itemize}
Let $\rho\in\C$, $v\in\cU(\M_0)$ and set $\tilde\rho := \Ad_v
\rho$. We call $v$ an \textbf{abstract $\rho$-charge transporter to
  $\C_{\N^c}$} if it holds $\sigma(v) = v
\eps_{\sigma,\rho}\;\;\text{for all } \sigma\in\C_\N$. The terminology
is motivated by the following equivalence 
\begin{itemize}
\item $\tilde\rho\in\C_{\N^c}$ if and only if $v$ is an abstract $\rho$-charge transporter to $\C_{\N^c}$. 
\end{itemize}
Analogous statements hold interchanging $\N$ with $\N^c$ and $\eps$
with $\eps^\op$. \footnote{The \emph{opposite} braiding of $\C$ is defined as $\eps^\op_{\rho,\sigma} := \eps^*_{\sigma,\rho}$, or equivalently by interchanging left and right localization in the $\DHR$ setting.}
\end{lemma}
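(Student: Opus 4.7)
The plan is to derive both equivalences from two inputs only: the naturality of the braiding in its second slot, and the defining property (iv) of an abstract point, namely $\eps_{\rho,\sigma}=\oneop$ whenever $\rho\in\C_\N$ and $\sigma\in\C_{\N^c}$. A preliminary observation that unifies both parts is that for every $u\in\cU(\N^c)$ the inner automorphism $\Ad_u$ lies in $\C_{\N^c}$ (because $u\in\N^c$ commutes with $\N$, so $\Ad_u$ fixes $\N$ elementwise); symmetrically $\Ad_u\in\C_\N$ for every $u\in\cU(\N)$. Note also that $\Ad_u\in\C$ since $\Ad_u$ is unitarily isomorphic to $\id\in\C$ via $u^*$ and the embedding $\C\hookrightarrow\End(\M_0)$ is replete.

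For the first equivalence, the direction $\rho\in\C_\N\Rightarrow\eps_{\rho,\Ad_u}=\oneop$ is immediate from (iv) combined with the observation above. For the converse, I would apply naturality of the braiding to the intertwiner $u^*\in\Hom(\Ad_u,\id)$ (the dual to the unit charge transporter $u\in\Hom(\id,\Ad_u)$):
\[\eps_{\rho,\id}\,\rho(u^*)=u^*\,\eps_{\rho,\Ad_u}.\]
Since $\eps_{\rho,\id}=\oneop$ by the unit coherence of the braiding, this yields the explicit formula $\eps_{\rho,\Ad_u}=u\,\rho(u)^*$. The hypothesis $\eps_{\rho,\Ad_u}=\oneop$ for all $u\in\cU(\N^c)$ then forces $\rho(u)=u$ on every unitary of $\N^c$, and by linear spanning $\rho$ fixes $\N^c$ pointwise, i.e., $\rho\in{\N^c}^{\perp}=\C_\N$.

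For the second equivalence, the starting point is once more naturality of the braiding, this time applied in its second slot to $v\in\Hom(\rho,\tilde\rho)$ against any $\sigma\in\C_\N$:
\[\eps_{\sigma,\tilde\rho}\,\sigma(v)=v\,\eps_{\sigma,\rho}.\]
If $\tilde\rho\in\C_{\N^c}$, then (iv) gives $\eps_{\sigma,\tilde\rho}=\oneop$ and the charge transporter identity $\sigma(v)=v\,\eps_{\sigma,\rho}$ drops out. Conversely, substituting this identity back into naturality produces $\eps_{\sigma,\tilde\rho}\cdot v\,\eps_{\sigma,\rho}=v\,\eps_{\sigma,\rho}$; cancelling the unitary $v\,\eps_{\sigma,\rho}$ on the right yields $\eps_{\sigma,\tilde\rho}=\oneop$ for every $\sigma\in\C_\N$. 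Specialising to $\sigma=\Ad_u$ with $u\in\cU(\N)$ and invoking the symmetric analogue of the first equivalence (proved by the same naturality argument with $\N$ and $\N^c$ interchanged, yielding $\eps_{\Ad_u,\tilde\rho}=\tilde\rho(u)\,u^*$) then forces $\tilde\rho\in\C_{\N^c}$.

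The main obstacle I anticipate is conceptual rather than computational: the proof must rely only on the abstract data of the braided action and the defining properties (i)--(iv) of an abstract point, avoiding any appeal to the geometric choice of auxiliary left/right localisations used in the original DHR definition of the braiding, since later applications handle abstract points that may be far from any geometric realisation. Naturality of the braiding combined with the trivialisation (iv) provides exactly the right categorical substitute for geometric left/right separation, and the derivation of the key formula $\eps_{\rho,\Ad_u}=u\,\rho(u)^*$ purely from the unit morphism $u^*\colon\Ad_u\to\id$ is what unifies both equivalences into a single, purely abstract argument.
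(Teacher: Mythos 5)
Your proof is correct and follows essentially the same route as the paper's: both rest on naturality of the braiding together with the convention $\eps_{\rho,\id}=\oneop$, which yields the explicit formulas $\eps_{\rho,\Ad_u}=u\rho(u)^*$ and $\eps_{\Ad_u,\rho}=\rho(u)u^*$, so that triviality of the braiding with inner automorphisms is exactly triviality of the action of the endomorphism on the implementing unitary. The one genuine (small) divergence is in the direction ``$v$ is an abstract charge transporter $\Rightarrow\tilde\rho\in\C_{\N^c}$'': the paper verifies $\sigma\tilde\rho=\tilde\rho\sigma$ for all $\sigma\in\C_\N$ and then invokes Proposition \ref{prop:cd} ($(\C_\N)^c=\C_{\N^c}$), a heavier result whose proof needs Kadison's theorem on unitary groups; you instead derive $\eps_{\sigma,\tilde\rho}=\oneop$ for all $\sigma\in\C_\N$ and specialize to $\sigma=\Ad_u$, $u\in\cU(\N)$, to get $\tilde\rho(u)=u$ directly, hence $\tilde\rho\in\N^\perp=\C_{\N^c}$. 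Your variant is self-contained and slightly more economical, at the price of not exhibiting the link to the relative-commutant duality that the paper's phrasing emphasizes; both arguments are valid under the stated hypotheses.
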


\begin{proof}
By naturality of the braiding and using the convention
$\eps_{\rho,\id} = \oneop$ we see that triviality of braiding
operators with inner automorphisms $\Ad_u$ is triviality of
the action of the endomorphism on $u$. Hence the first statement follows.

For the second, take $\rho\in\C$ and $v\in\cU(\M_0)$ an abstract $\rho$-charge transporter to $\C_{\N^c}$. For every $\sigma\in\C_\N$, $a\in\M_0$ compute $\sigma \tilde\rho (a) = \sigma(v) \sigma\rho(a) \sigma(v^*) = v\eps_{\sigma,\rho} \sigma\rho(a) \eps_{\sigma,\rho}^* v^* = \tilde\rho\, \sigma(a)$ hence $\tilde\rho\in(\C_\N)^c = \C_{\N^c}$ by Proposition \ref{prop:cd}. Viceversa, if $\tilde\rho = \Ad_v\rho \in\C_{\N^c}$ for some $v\in\cU(\M_0)$ then $\eps_{\sigma,\tilde\rho}=\oneop$ for every $\sigma\in\C_\N$ by (iv). Hence $v \eps_{\sigma,\rho} \sigma(v^*) = \oneop$ and we obtain the second statement.
\end{proof}

On the other hand, after defining $\C_\N$, $\C_{\N^c}$ by duality from $\N$, $\N^c$, condition (iv) turns out to be equivalent to a finite system of equations.

\begin{proposition}\label{prop:naturalityiefinmanyeqs}
Let $(\N,\N^c)$ be a pair of subfactors of $\M_0$ fulfilling conditions (i), (ii) and (iii) in the Definition \ref{def:apts} of abstract points. For each sector labelled by $i=0,\ldots,n$ choose (assumption (iii)) irreducible representatives $\rho_i\in\C_\N$ and $\sigma_i\in\C_{\N^c}$ respectively in $\C_\N$ and $\C_{\N^c}$, such that $[\rho_i] = [\sigma_i]$. Then
$$\eps_{\rho_i,\sigma_j} = \oneop,\quad i,j=0,\ldots,n$$
is equivalent to condition (iv).
\end{proposition}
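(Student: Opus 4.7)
One direction is immediate, so the plan is to prove the converse: assuming $\eps_{\rho_i,\sigma_j}=\oneop$ on the chosen irreducible representatives, bootstrap this to every pair $(\rho,\sigma)\in\C_\N\times\C_{\N^c}$ in two standard steps, using only naturality of the $\DHR$ braiding together with the algebraic data already extracted from (i)--(iii).

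The key preliminary, already noted in the proof of Proposition \ref{prop:cd}, is that $\rho\sigma=\sigma\rho$ as endomorphisms of $\M_0$ whenever $\rho\in\C_\N$ and $\sigma\in\C_{\N^c}$: $\rho$ fixes $\N^c$ pointwise, $\sigma$ fixes $\N$ pointwise, and $\M_0=\N\vee\N^c$ by (ii); in particular $\oneop_{\rho\sigma}$ is a bona fide element of $\Hom(\rho\sigma,\sigma\rho)$, so the equation $\eps_{\rho,\sigma}=\oneop$ makes sense. By Proposition \ref{prop:abstractresfun}, morphisms of $\C_\N$ lie in $\N$ and morphisms of $\C_{\N^c}$ lie in $\N^c$, and these commute in $\M_0$ since $\N^c=\N'\cap\M_0$. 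Consequently, for $x\in\N$ a morphism between objects of $\C_\N$ and $y\in\N^c$ a morphism between objects of $\C_{\N^c}$, the horizontal product collapses to the operator product: $x\times y = x\,\rho(y)=xy$ (using $\rho(y)=y$) and $y\times x = y\,\sigma(x)=yx=xy$, so in particular $x\times y = y\times x$.

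Step one handles arbitrary irreducibles. Given irreducible $\rho'\in\C_\N$ and $\sigma'\in\C_{\N^c}$, (iii) yields $[\rho']=[\rho_i]$ and $[\sigma']=[\sigma_j]$ for some $i,j$, so one may pick unitary intertwiners $s\in\N$ and $t\in\N^c$. Naturality of the braiding gives
$$\eps_{\rho',\sigma'}\circ(s\times t) = (t\times s)\circ\eps_{\rho_i,\sigma_j} = t\times s,$$
and the collapse noted above yields $s\times t = t\times s$, whence $\eps_{\rho',\sigma'}=\oneop$. Step two extends to reducible objects using the rigid fusion $C^*$-structure of $\C_\N$ and $\C_{\N^c}$ supplied by Proposition \ref{prop:rigidityfusion}: decompose $\rho=\bigoplus_k\rho'_k$ and $\sigma=\bigoplus_l\sigma'_l$ with isometric intertwiners $w_k\in\N$ and $v_l\in\N^c$ satisfying $\sum_k w_kw_k^*=\oneop=\sum_l v_lv_l^*$. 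Applying naturality to the $w_k\times v_l$ and invoking step one gives $\eps_{\rho,\sigma}(w_k\times v_l)=(v_l\times w_k)\eps_{\rho'_k,\sigma'_l}=v_l\times w_k=w_k\times v_l$; post-composing with $(w_k\times v_l)^*$ and summing in $k,l$ produces
$$\eps_{\rho,\sigma} = \sum_{k,l}(w_k\times v_l)(w_k\times v_l)^* = \sum_{k,l} w_kv_lv_l^*w_k^* = \oneop.$$

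The argument is essentially mechanical once the preliminaries are in place; the one point that genuinely requires the operator-algebraic hypotheses, rather than abstract category theory alone, is the identification of the abstract horizontal product of morphisms with the concrete operator product in $\M_0$. This identification is what allows the a priori uncountable system (iv) to collapse to the stated finite set of equations on representatives, and is precisely where the mutually trivial actions of $\C_\N$ on $\N^c$ and of $\C_{\N^c}$ on $\N$ are decisive.
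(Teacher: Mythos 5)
Your proof is correct and follows essentially the same route as the paper's: naturality of the braiding transports the hypothesis from the chosen representatives to arbitrary irreducibles via unitary charge transporters in $\N$ and $\N^c$ (which commute with each other and are fixed by the opposite endomorphisms, by Proposition \ref{prop:abstractresfun} and condition (ii)), and the reducible case follows by summing over Cuntz isometries. The only cosmetic difference is that the paper decomposes reducible objects directly into the chosen representatives rather than first treating arbitrary irreducibles, which changes nothing of substance.
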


\begin{proof}
In order to show the nontrivial implication, we first take $\rho\in\C_\N$ and $\sigma\in\C_{\N^c}$ irreducible. By Proposition \ref{prop:abstractresfun} we have $\Ad_{u_i} \rho = \rho_i$ and $\Ad_{v_j} \sigma = \sigma_j$ for some $i,j \in \{0,\ldots,n\}$ and $u_i\in\cU(\N)$, $v_j\in\cU(\N^c)$. Naturality of the braiding gives
$$\eps_{\rho,\sigma} = \sigma(u_i^*)v_j^*\eps_{\rho_i,\sigma_j}u_i\rho(v_j)$$
hence
$\eps_{\rho,\sigma} = \sigma(u_i^*)v_j^*u_i\rho(v_j) = \oneop$
because, e.g., $u_i\rho(v_j) = u_i v_j = v_j u_i$. Hence we have shown
(iv) in the irreducible case.

In the reducible case, we can write direct sums $\rho=\sum_a s_a\rho_a s_a^*$ and $\sigma=\sum_b t_b\sigma_b t_b^*$ where $a,b\in\{0,\ldots,n\}$ and $\rho_a\in\C_\N$, $\sigma_b\in\C_{\N^c}$ run in our choice of representatives and $\{s_a\}_a$, $\{t_b\}_b$ are Cuntz algebras of isometries respectively in $\N$, $\N^c$, again by Proposition \ref{prop:abstractresfun}. As before
$$\eps_{\rho,\sigma} = \sum_{a,b} \sigma(s_a) t_b \eps_{\rho_a,\sigma_b}s_a^*\rho(t_b^*) = \sum_{a,b} s_a s_a^* t_b t_b^* = \oneop$$
so we conclude (iv) for all $\rho\in\C_\N$, $\sigma\in\C_{\N^c}$.
\end{proof}

\begin{remark}
Thinking in terms of $\DHR$ localization properties of the endomorphisms, if we have $\rho\in\C_\N$, $[\rho]\neq[\id]$, the previous statement says that it cannot be localizable in some interval $I_\rho$ which is to the right of some localization intervals $I_{j}$ of $\sigma_j\in\C_{\N^c}$ as above, for all $j=0,\ldots,n$, for every choice of such $\sigma_j\in\C_{\N^c}$. This would imply degeneracy of $[\rho]$, hence contradict modularity of $\DHR\{\A\}$. Despite this naive left/right separation picture, and the results of the last section, we shall see next how abstract points can become wildly non-geometric or \lq\lq fuzzy". This is a typical situation in QFT where points of spacetime are replaced by (field) operators.  
\end{remark}

\section{Fuzzy abstract points}\label{sec:fuzzyapts}

Let $\{\A\}$ be a completely rational conformal net on the line, let
$I_0\in \cI$, $\M_0 = \A(I_0)$ and $\C = \DHR^{I_0}\{\A\}$. Inside
$\M_0$ we can find honest points (those associated to geometric points
$p\in I_0$, see Remark \ref{rmk:hpts}), but also uncountably many
families of abstract points which are \emph{fuzzy}, in the sense that
they are not honest anymore (with respect to $\{\A\}$) and do not
resemble any kind of geometric interpretation. The following examples
give algebraic deformations of abstract points into abstract points, and of honest
points into possibly fuzzy ones.

\begin{example}\label{ex:fatpts}
Let $p=(\A(I_1),\A(I_2))$ be an honest point of $\M_0$ and consider localizable unitaries $u\in\cU(\M_0)$. Then $upu^* := (\Ad_u(\A(I_1)),\Ad_u(\A(I_2)))$ is an abstract point of $\M_0$, see Definition \ref{def:apts}. Indeed conditions (i) and (ii) follow because $\Ad_u:\M_0\rightarrow \M_0$ is a normal automorphism, in particular $\Ad_u(\A(I_1)^c) = \Ad_u(\A(I_1))^c$. Now if $\rho\in\C_{\A(I_1)}$ then $^u\rho := \Ad_u \circ\,\rho\circ \Ad_{u^*}$ is again in $\C$ because $\Ad_u \circ\,\rho\circ \Ad_{u^*} = u\rho(u^*)\rho(\cdot)\rho(u)u^*$ and $u\rho(u^*)\in\cU(\M_0)$. Moreover it acts trivially on $\Ad_u(\A(I_1))^c$ hence $\rho\mapsto {^u\rho}$ defines a bijection between the objects of $\C_{\A(I_1)}$ and $\C_{\Ad_u(\A(I_1))}$, and (iii) follows. One easily checks that $\rho\mapsto{^u}\rho$ respects the tensor structure of $\C$, where the action on arrows $s\in\Hom_\C(\rho,\sigma)$, $\rho,\sigma\in\C$ is given by $^u s := \Ad_u(s)$. Condition (iv) is also fulfilled because $\rho\mapsto{^u}\rho$ respects the braiding of $\C$, namely
$$\eps_{^u\rho,^u\sigma} = u\sigma(u^*)\sigma(u\rho(u^*))\eps_{\rho,\sigma}\rho(\sigma(u)u^*)\rho(u)u^* = {^u}{\eps_{\rho,\sigma}}$$
by naturality, hence $\eps_{\rho,\sigma} = \oneop$ if and only if
$\eps_{^u\rho,^u\sigma} = \oneop$. In other words $u\in\cU(\M_0)$,
$\rho\mapsto{^u}\rho$ gives rise to a group of UBTC autoequivalences
of $\C$ which are also strict tensor and automorphic.

It can happen that $upu^* = p$, e.g., if $u$ is localizable away from
the cut geometric point $p\in I_0$. Otherwise $u$ and $p$ need not
\lq\lq commute" and $upu^*$ can be viewed as a \lq\lq fat" point of $\M_0$.  
\end{example}

\begin{example}\label{ex:fuzzypts}
Let $p=(\A(I_1),\A(I_2))$ as in the previous example and consider the modular group of $\M_0$ with respect to any faithful normal state $\varphi$, e.g., the vacuum state $\omega(\cdot)=(\Omega|\cdot\Omega)$ of $\{\A\}$. Denote by $\Delta_{\varphi}$ and $\sigma_t^{\varphi} = \Ad_{\Delta_{\varphi}^{it}}$, $t\in\RR$ respectively the modular operator and the modular group of ($\M_0,\varphi$). Then $\Delta_{\varphi}^{it} p \Delta_{\varphi}^{-it}$ is an abstract point of $\M_0$, for every $t\in\RR$. Indeed (i) and (ii) follow as before, while (iii) is guaranteed by the existence of localizable Connes cocycles $u_{\rho,t}\in\cU(\M_0)$, as shown by \cite[Prop.\ 1.1]{Lon97}, which fulfill the intertwining relation $^t\rho = \Ad_{u_{\rho,t}} \rho$ on $\M_0$ for $^t\rho := \sigma_t^{\varphi}\circ\,\rho\circ\sigma_{-t}^{\varphi}$. Hence $^t\rho$ is again $\DHR$ and $t\mapsto {^t\rho}$ gives a tensor autoequivalence of $\C$, defined on arrows as $^t s := \sigma_{t}^{\varphi}(s)$. Using more advanced technology we can show that $t\mapsto {^t\rho}$ respects the braiding of $\C$. Namely
$$\eps_{{^t\rho,^t\sigma}} = u_{\sigma,t} \sigma(u_{\rho,t}) \eps_{\rho,\sigma} \rho(u_{\sigma,t}^*) u_{\rho,t}^* = u_{\sigma\rho,t} \eps_{\rho,\sigma} u_{\rho\sigma,t}^* = \sigma_t^\varphi (\eps_{\rho,\sigma}) = {^t \eps_{\rho,\sigma}}$$
where the first equality follows by naturality of the braiding, the
second and third by tensoriality and naturality of the Connes cocycles
associated to the modular action of $\RR$, see respectively
\cite[Prop.\ 1.4, 1.3]{Lon97}. In particular, $\eps_{\rho,\sigma} =
\oneop$ if and only if $\eps_{^t\rho,^t\sigma} = \oneop$, hence
condition (iv) is satisfied. As before $t\in\RR$,
$\rho\mapsto{^t}\rho$ gives rise to a group of UBTC autoequivalences
of $\C$ which are again strict tensor and automorphic. The point
$\Delta_{\varphi}^{it} p \Delta_{\varphi}^{-it}$ is not honest in general, but highly fuzzy.

In the special case of the vacuum state $\varphi = \omega$, the
modular action is geometric and coincides with the dilations subgroup
$t\mapsto\Lambda_{I_0}^t$ of $\Mob$ which preserve $I_0$
(Bisognano-Wichmann property \cite[Prop.\ 1.1]{GuLo96}), hence
$\Delta_{\omega}^{it} p \Delta_{\omega}^{-it} = \Lambda_{I_0}^{-2\pi
t}(p)$ is just a M\"obius transformed honest point (with respect to $\{\A\}$).
\end{example}

In the terminology of \cite[App.\ 5]{Tur10mueger} due to M. M\"uger,
see also \cite[App.\ A]{Lon97}, we have found that $\cU(\M_0)$ (and
all of its subgroups) and $\RR$ (for every choice of faithful normal
state on $\M_0$) \emph{act} on $\C$ (as UBTC strict automorphisms),
and the actions are strict. One can then define the category of
\lq\lq$G$-fixed points", $\C^G$, where $G$ denotes one of these groups
with the associated action. In our case $\C^G = \C$ because all the
objects $\rho$ of $\C$ are \lq\lq $G$-equivariant", i.e., admit a
\emph{cocycle} for the $G$-action, i.e., unitary isomorphisms
$v_{\rho,g}:\rho\rightarrow {^g}\rho$, $g\in G$, such that
$v_{\rho,gh} = {^g}(v_{\rho,h})\circ v_{\rho,g}$. In Example
\ref{ex:fatpts} the cocycle identity follows because $\rho$ are
*-homomorphisms, in Example \ref{ex:fuzzypts} it coincides with the
characterization of the Connes cocycles.

In our case these actions are also implemented by unitaries $U_g\in\cU(\Hilb)$, hence we have examples of (groups of) automorphisms of the braided action $\C\hookrightarrow \End(\M_0)$ in the sense of Definition \ref{def:bractionisom}.

\section{Prime UMTCs and prime conformal nets}\label{sec:primeCFTline}

There are other types of abstract points, living inside completely rational nets that \emph{factorize} as tensor products, which are abstract but neither honest nor fuzzy, in the sense that they are almost geometric, or better, geometric in 1+1 dimensions. 
Ruling out these cases will lead us to the notion of \emph{prime conformal nets}.

\begin{example}\label{ex:tensorproductnet}
Consider a completely rational conformal net on the line of the form
$\{I\in\cI \mapsto \A(I) = \A_1(I) \otimes \A_2(I)\} = \{\A_1 \otimes
\A_2\}$, where $\{\A_1\}$, $\{\A_2\}$ are two nontrivial nets, then $\DHR\{\A\} \simeq \DHR\{\A_1\}\boxtimes\DHR\{\A_2\}$ as UBTCs. An equivalence is given by $\rho\boxtimes\sigma \mapsto \rho\otimes\sigma$, $T\boxtimes S \mapsto T\otimes S$ where essential surjectivity follows from \cite[Lem.\ 27]{KLM01} and the braiding on the l.h.s.\ is defined as $\eps_{\rho\boxtimes\sigma, \tau\boxtimes\eta} = \eps^{\A_1}_{\rho, \tau} \boxtimes \eps^{\A_2}_{\sigma, \eta}$.
We can consider as before a local algebra $\M_0 := \A_1(I_0)\otimes\A_2(I_0)$ for some interval $I_0\in\cI$, and take two honest points 
$p_1 = (\A_1(I_1), \A_1(I_2))$ in $\A_1(I_0)$ and $p_2 = (\A_2(J_1),\A_2(J_2))$ in $\A_2(I_0)$ respectively in the two components. Now setting $\N := \A_1(I_1)\otimes\A_2(J_1)$
we have that irreducibles in $\C_\N$ are given by $\Ad_u\, \rho\otimes\sigma$ for some $\rho\in\DHR^{I_1}\{\A_1\}$, $\sigma\in\DHR^{J_1}\{\A_2\}$ and $u\in\cU(\N)$.
Moreover, the pair of algebras $q = (\N,\N^c)$ is an abstract point of $\M_0$, but \emph{not} honest unless $I_1 = J_1$. In other words, $q = p_1\otimes p_2$ is an honest point of $\M_0$ if and only if $p_1 = p_2$ as geometric points of $I_0$. 
\end{example}

We recall the following definition due to \cite{Mue03}, see also \cite{DMNO13}.

\begin{definition}\label{def:primeUMTC}
A UMTC $\C$ is called a \textbf{prime UMTC} if $\C\not\simeq\catVec$ and every full unitary fusion subcategory $\D\subset\C$ which is again a UMTC is either $\D\simeq\C$ or $\D\simeq\catVec$ as UBTCs.
\end{definition}

The terminology is motivated by the following
proposition, which is among the deepest results on the structure of UMTCs.
It establishes prime UMTCs as building blocks in the classification program of UMTCs, see \cite{RSW09}. 

\begin{proposition}\label{prop:primefactorizUMTCs} \emph{\cite{Mue03}, \cite{DGNO10}.}
Let $\C$ be a UMTC, let $\D\subset\C$ be a unitary full fusion
subcategory and consider the centralizer of $\D$ in $\C$ \footnote{or
  braided relative commutant of $\D\subset\C$. Cf.\ the definition of relative commutant $\D^c$ we introduced in Section \ref{sec:dualityrelations} for full inclusions of tensor categories. Cf.\ also the definition \cite[Def. 2.9]{HePe15} of relative commutant in the sense of Drinfeld.} defined as the full subcategory of $\C$ with objects
$$\Z_{\C}(\D) := \big\{x\in\C : \eps_{x,y} = \eps^\op_{x,y}\, ,\,y\in\D\big\}.$$
It holds
\begin{itemize}
\item $\Z_{\C}(\D)$ is a unitary (full) fusion subcategory of $\C$, which is also replete, and $\Z_{\C}(\Z_{\C}(\D)) = \overline \D$ where $\overline \D$ denotes the repletion of $\D$ in $\C$.
\end{itemize}
If $\D$ is in addition a UMTC, i.e., $\Z_{\D}(\D)\simeq\catVec$, then
\begin{itemize} 
\item $\Z_{\C}(\D)$ is also a UMTC and $\C \simeq \D \boxtimes \Z_{\C}(\D)$ as UBTCs.
\end{itemize}
In particular, every UMTC admits a finite prime factorization, i.e.
$$\C\simeq\D_1\boxtimes\ldots\boxtimes\D_n$$
as UBTCs, where $\D_i$, $i=1,\ldots,n$ are prime UMTCs, fully realized in $\C$.
\end{proposition}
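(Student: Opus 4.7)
The plan is to establish the three bullet claims in order. First, $\Z_\C(\D)$ is a unitary fusion subcategory of $\C$: closure under tensor products follows from the hexagon axiom (if $x_1, x_2$ both centralize every $y \in \D$ then so does $x_1 \otimes x_2$), closure under direct sums, subobjects and duals from naturality of the braiding together with semisimplicity and rigidity of $\C$, and repleteness because the condition $\eps_{x,y} = \eps^\op_{x,y}$ is stable under unitary isomorphism of $x$. The tautological inclusion $\overline\D \subset \Z_\C(\Z_\C(\D))$ is immediate. For the reverse inclusion I would invoke Müger's dimension formula
\[ \dim \D \cdot \dim \Z_\C(\D) = \dim \C, \]
valid for every full fusion subcategory $\D$ of a UMTC $\C$, which is a non-trivial consequence of non-degeneracy of the $S$-matrix. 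Applying it to $\D \subset \C$ and to $\Z_\C(\D) \subset \C$ gives $\dim \Z_\C(\Z_\C(\D)) = \dim \overline\D$, which combined with the trivial inclusion and semisimplicity forces equality.

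For the second bullet, assume $\D$ is itself a UMTC, i.e., $\Z_\D(\D) \simeq \catVec$. Then $\overline\D \cap \Z_\C(\D) = \Z_{\overline\D}(\overline\D) \simeq \catVec$, so the two subcategories intersect only in the tensor unit up to isomorphism. I would then consider the strict tensor functor
\[ F \colon \D \boxtimes \Z_\C(\D) \longrightarrow \C, \qquad x \boxtimes y \longmapsto x \otimes y, \]
extended on morphisms in the obvious way. It is braided by construction, since the defining condition $\eps_{x,y} = \eps^\op_{x,y}$ for $y \in \Z_\C(\D)$ makes the two candidate braidings on the image agree. Full faithfulness follows from the trivial intersection via a standard semisimplicity argument on simple objects, and the dimension formula forces $\dim(\D \boxtimes \Z_\C(\D)) = \dim \C$; hence a fully faithful braided tensor functor between semisimple rigid categories of matching global dimension is automatically an equivalence. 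Non-degeneracy of the braiding on $\Z_\C(\D)$ then follows either directly from the factorization or from the dimension formula applied to $\Z_\C(\D) \subset \C$.

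The prime factorization claim is now an induction on $\dim \C$ (equivalently on the number of inequivalent simple objects). If $\C$ is prime there is nothing to prove; otherwise, by Definition \ref{def:primeUMTC} there exists a full unitary fusion subcategory $\D \subset \C$ which is itself a UMTC with $\D \not\simeq \catVec$ and $\D \not\simeq \C$. The second bullet then yields $\C \simeq \D \boxtimes \Z_\C(\D)$, and both factors have strictly smaller global dimension than $\C$ (the first since $\D \not\simeq \C$, the second since $\dim \D > 1$), so the induction hypothesis applies to each.

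The main obstacle throughout is the double centralizer identity $\Z_\C(\Z_\C(\D)) = \overline\D$: this is the heart of Müger's theorem and genuinely exploits modularity of $\C$, since without non-degeneracy of the $S$-matrix the dimension formula breaks down and the reverse inclusion can fail. Once that identity is in place, the factorization and the prime decomposition unfold as essentially formal corollaries.
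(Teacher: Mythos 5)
Your proposal is correct: it reproduces the standard argument of M\"uger \cite{Mue03} (see also \cite{DGNO10}), which is precisely what the paper relies on, since Proposition \ref{prop:primefactorizUMTCs} is stated there as a quotation of those references without proof. The one genuinely non-formal input, the centralizer dimension formula $\dim\D\cdot\dim\Z_{\C}(\D)=\dim\C$ for a full fusion subcategory of a modular category, you correctly identify as the heart of the matter and invoke as a black box; everything else (closure properties of $\Z_\C(\D)$, the double-centralizer identity from the dimension count, full faithfulness of $x\boxtimes y\mapsto x\otimes y$ via the trivial intersection $\overline\D\cap\Z_\C(\D)\simeq\catVec$, braidedness from vanishing monodromy, and the induction on global dimension) unfolds as in the cited sources.
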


\begin{remark}
Observe that assuming $\DHR\{\A\}$ to be prime as an abstract UMTC
rules out holomorphic nets. Moreover the examples seen in
\ref{ex:tensorproductnet} cannot arise, unless one of the two tensor
factors is holomorphic, i.e., $\{\A\} = \{\A_1\otimes \Aholo\}$. The
following definition is aimed to rule out also
this case.
\end{remark}

\begin{definition}\label{def:primeconfnet}
Let $\{\A\}$ be a completely rational conformal net on the line. Fix arbitrarily $I_0\in\cI$ and let $\M_0 = \A(I_0)$, $\C = \DHR^{I_0}\{\A\}$. We call $\{\A\}$ a \textbf{prime conformal net} if the following conditions are satisfied.
\begin{itemize} \itemsep0mm
\item $\C\simeq\DHR\{\A\}$ is a prime UMTC.
\item For every ordered pair $p=(\N,\N^c)$, $q=(\M,\M^c)$ of abstract points of $\M_0$, if $\N\vee\M^c$ is normal in $\M_0$ then $\M\subset\N$, in particular $\N\vee\M^c = \M_0$.
\end{itemize}
\end{definition}

\begin{remark}
Notice that the primality assumption on $\C \simeq \DHR\{\A\}$ is purely categorical, i.e., invariant under equivalence of UBTCs, hence contains no information about the actual size of the category. By definition of prime UMTCs, holomorphic nets are \emph{not} prime conformal nets.
\end{remark}

\begin{remark}
If $p$, $q$ mutually fulfill, e.g., $\cR = (\cR \cap \cS) \vee (\cR
\cap \cS^c)$ for $\cR,\cS\in\{\N,\N^c,\M,\M^c\}$ (resembling strong additivity), then the
statements $\M\subset\N$ and $\N\vee\M^c = \M_0$ are actually equivalent. 
\end{remark}

It is easy to see that \emph{prime} conformal nets cannot factor through nontrivial holomorphic subnets.

\begin{example}\label{ex:tensorproductwithholonet}
Let $\{\A\}$ be a prime conformal net on the line, hence not holomorphic, but factoring through a holomorphic subnet, $\{\A\}=\{\A_1\otimes\Aholo\}$. Considering points $p_1\otimes p_2$ of $\M_0$ like in Example \ref{ex:tensorproductnet}, it is easy to construct $\N\vee\M^c$ which are normal in $\M_0$ but neither exhaust $\M_0$ nor have $\M\subset\N$, e.g., enlarging $\M$ in the holomorphic component. Then $\{\A\}$ cannot be prime unless $\{\Aholo\} = \{\CC\}$.
\end{example}

\begin{remark}
Both the notion of primality for completely rational conformal nets and the property of not factorizing through holomorphic subnets are 	\emph{invariant} under isomorphism of nets.
\end{remark}

Concerning the converse of the implication seen in Example \ref{ex:tensorproductwithholonet}, let $\{\A\}$ be a completely rational net, not necessarily prime, take $p$, $q$ as in Definition \ref{def:primeconfnet}. The idea is that $(\N\vee\M^c)^c = \N^c\cap \M$ are abstract \lq\lq interval algebras'' which lie in the \lq\lq holomorphic part'' of the net whenever $\N\vee\M^c$ is normal in $\M_0$. More precisely, we can show that they necessarily factor out in a tensor product subalgebra of $\M_0$, and that the local subcategories associated to them \`{a} la $\DHR$ are trivial, namely $\C_{\N^c}\cap\C_\M\subset\catVec$. \footnote{We identify $\catVec$ with the full subcategory of $\C$ whose objects are the \emph{inner} endomorphisms, cf.\ Proposition \ref{prop:dualofsergio}.}

\begin{proposition}\label{prop:holotensorsplit}
Let $\{\A\}$ be a completely rational conformal net on the line, fix $I_0\in\cI$ and let $\M_0 = \A(I_0)$, $\C = \DHR^{I_0}\{\A\}$. Consider the family $\cF$ of ordered pairs of abstract points $p = (\N,\N^c)$, $q = (\M,\M^c)$ such that $\N\vee\M^c$ is normal in $\M_0$, then the following holds.
\begin{itemize} \itemsep0mm
\item For every $(p,q) \in\cF$ we have $\C_{\N^c}\cap\C_\M\subset\catVec$.
\item Consider the subalgebra of $\M_0$ defined as 
$$\M_0^{\holo} := \bigvee_{(p,q)\in\cF} \N^c \cap \M$$ 
then $\M_0^{\holo}$ is either $\CC$ or a type $\III_1$ subfactor of $\M_0$, and the same holds for the relative commutant 
$$(\M_0^{\holo})^c = \bigcap_{(p,q)\in\cF} \N \vee \M^c.$$ 
Moreover we have a splitting
$$\M_0^{\holo} \vee (\M_0^{\holo})^c \cong \M_0^{\holo} \otimes (\M_0^{\holo})^c$$
as von Neumann algebras.
\end{itemize}
\end{proposition}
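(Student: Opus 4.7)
The three bullets are proved in order.

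For the first bullet, fix $(p,q)\in\cF$ and take $\rho\in\C_{\N^c}\cap\C_\M$. By Proposition~\ref{prop:cd} and Proposition~\ref{prop:abstractresfun}, $\rho$ acts as the identity on $\N$ (from $\rho\in\C_{\N^c}$, using $\N^{cc}=\N$ from (ii)) and on $\M^c$ (from $\rho\in\C_\M$), hence on $\cP:=\N\vee\M^c$, and restricts to an endomorphism of $\cP^c=\N^c\cap\M$ using the normality hypothesis. Condition (iv) at the two abstract points gives the one-sided trivialities $\eps_{\sigma,\rho}=\oneop$ for every $\sigma\in\C_\N$ and $\eps_{\rho,\tau}=\oneop$ for every $\tau\in\C_{\M^c}$. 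By (iii), each simple sector $[\xi]$ of $\C$ admits representatives $\xi_1\in\C_\N$ and $\xi_2\in\C_{\M^c}$ connected by a unitary charge transporter in $\M_0$ (in the sense of Lemma~\ref{lem:abstrchargetranspo}). The goal is then to combine the two one-sided trivialities via naturality of the braiding into a full monodromy triviality $\eps_{\xi,\rho}\eps_{\rho,\xi}=\oneop$ on a representative of every simple sector. Modularity (non-degeneracy of the $\DHR$ braiding) then forces $[\rho]=[\id]$, so $\rho\in\catVec$.

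For the second bullet, one first writes $(\M_0^\holo)^c=\bigcap_{(p,q)\in\cF}(\N\vee\M^c)$ thanks to the normality $(\N\vee\M^c)^{cc}=\N\vee\M^c$ built into the definition of $\cF$, so that both $\M_0^\holo$ and its relative commutant in $\M_0$ are von Neumann subalgebras. Each $\N^c\cap\M$ is injective by (i) combined with the averaging techniques of Proposition~\ref{prop:relessduality}; when nontrivial it is a factor, because by the first bullet only inner endomorphisms of $\M_0$ can be dually associated to it, forcing any element of its center to be pointwise fixed by a large enough family of automorphisms of $\M_0$ and hence to lie in $Z(\M_0)=\CC$. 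Type $\III_1$ is inherited from $\M_0$ via Tomita--Takesaki theory: the modular flow of any faithful normal state on $\M_0$ restricts to $\M_0^\holo$ through the normality built into the definition. The generated algebra $\M_0^\holo$ inherits factoriality and type by a direct extension of the same argument to the full family of abstract points; the dual argument treats $(\M_0^\holo)^c$.

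For the third bullet, the tensor splitting $\M_0^\holo\vee(\M_0^\holo)^c\cong\M_0^\holo\otimes(\M_0^\holo)^c$ reduces to W*-independence of the two mutually commuting factors in Takesaki's sense, equivalently to the existence of an intermediate type I factor between them. This is delivered by the split property (Assumption (b)) of the ambient conformal net, applied at the level of the pair $\M_0^\holo,(\M_0^\holo)^c$ whose intersection has already been shown to be trivial in the second bullet. The canonical spatial identification with the W*-tensor product then follows.

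I expect the principal obstacle to be the braiding manipulation in the first bullet: the two applications of (iv) produce trivialities in opposite slots of $\eps$, and assembling them into a genuine monodromy triviality on each sector requires careful use of naturality together with the unitary charge transporters interpolating between $\C_\N\simeq\C\simeq\C_{\M^c}$. The hypothesis that both $(\N,\N^c)$ and $(\M,\M^c)$ are abstract points, rather than arbitrary subalgebra pairs, is what must make this bookkeeping go through, and it is likely that one must exploit Proposition~\ref{prop:naturalityiefinmanyeqs} to reduce the infinite family of monodromy conditions to a finite algebraic system amenable to direct verification.
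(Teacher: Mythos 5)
Your plan for the first bullet assembles the right ingredients but stops exactly at the decisive step, which you defer as a ``goal'' and an ``expected obstacle.'' The bridge between the two one-sided trivialities is not Proposition \ref{prop:naturalityiefinmanyeqs}; it is the normality hypothesis itself, used in a way your sketch misses. Every unitary charge transporter $u\in\Hom_\C(\sigma,\tilde\sigma)$ with $\sigma\in\C_\N$, $\tilde\sigma\in\C_{\M^c}$ commutes with $\N^c\cap\M$, so the von Neumann algebra $\cU_\C(\N,\M^c)$ they generate sits between $\N\vee\M^c$ and $(\N\vee\M^c)^{cc}$; normality of $\N\vee\M^c$ says precisely that these transporters generate no more than $\N\vee\M^c$. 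Since $\C_{\N^c}\cap\C_\M=\N^\perp\cap{\M^c}^\perp=(\N\vee\M^c)^\perp$, your $\rho$ fixes every such $u$, and naturality gives $\rho(u)=u\,\eps_{\rho,\sigma}$ once $\eps_{\rho,\tilde\sigma}=\oneop$ is known from (iv) at $q$; hence $\eps_{\rho,\sigma}=\oneop$, which combined with $\eps_{\sigma,\rho}=\oneop$ from (iv) at $p$ kills the monodromy against all of $\C_\N\simeq\C$, and modularity concludes. Without this input the charge transporter $u$ is only known to lie in $(\N\vee\M^c)^{cc}$ and $\rho$ has no reason to fix it, so the bookkeeping you worry about genuinely cannot close. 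Your stated use of normality (to restrict $\rho$ to $\N^c\cap\M$) is not the relevant one.

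The second and third bullets as you argue them do not hold up. The modular flow of an \emph{arbitrary} faithful normal state of $\M_0$ does not preserve $\M_0^{\holo}$; what is actually used is that the \emph{vacuum} modular flow maps abstract points to abstract points (via localizable Connes cocycles, Example \ref{ex:fuzzypts}) and preserves normality of subalgebras, hence permutes $\cF$ and globally preserves $\M_0^{\holo}$, yielding a faithful normal conditional expectation by Takesaki's theorem. Factoriality and the type $\III_1$ statement then follow from \emph{ergodicity} of the vacuum modular flow (Bisognano--Wichmann plus uniqueness of the dilation-invariant vector), which forces the restricted state to have trivial centralizer; your argument that the center is ``fixed by a large enough family of automorphisms'' is not a proof, and factoriality of each individual $\N^c\cap\M$ is neither established nor what is needed. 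Finally, the split property of the net concerns inclusions $\A(I)\subset\A(J)$ with $\overline I\subset J$ and gives you nothing about the commuting pair $\M_0^{\holo}$, $(\M_0^{\holo})^c$; the tensor splitting instead follows from Takesaki's theorem applied to the subfactor $\M_0^{\holo}\subset\M_0$ equipped with the normal conditional expectation just obtained.
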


\begin{proof}
Normality of $\N\vee\M^c$ in $\M_0$ means $\N\vee\M^c =
(\N\vee\M^c)^{cc}$, equivalently $(\N^c\cap\M)^c = \N\vee\M^c$, but
there is a more useful characterization. Without assuming normality,
let $\rho\in\C_\N$, $\tilde\rho\in\C_{\M^c}$ and $u$ a unitary charge
transporter from $\rho$ to $\tilde\rho$. For every $a\in\N^c\cap\M$ we
have $ua = u \rho(a) = \tilde\rho(a) u = a u$ hence
$u\in(\N^c\cap\M)^c = (\N\vee\M^c)^{cc}$. Denoting by 
$$\cU_{\C} (\N,\M^c) := \vN\{u\in\Hom_{\C}(\rho,\tilde\rho) \cap \cU(\M_0),\; \rho\in\C_\N, \; \tilde\rho\in\C_{\M^c}\}$$
the von Neumann algebra generated by the charge transporters, we have
\begin{equation}\label{eq:abstrchargetranspos}\N\vee\M^c \subset \cU_{\C} (\N,\M^c) \subset (\N\vee\M^c)^{cc}\end{equation}
where the first inclusion holds because the unitaries in $\cU(\N)$ and $\cU(\M^c)$ generate inner automorphisms from the vacuum. Normality of $\N\vee\M^c$ in $\M_0$ turns out to be \emph{equivalent} to $\cU_{\C} (\N,\M^c) = \cU_{\C} (\N,\M^c)^{cc} = \N\vee\M^c$.
Using this we can show that $\C_{\N^c}\cap\C_\M\subset\catVec$. Let $\rho \in \C_{\N^c}\cap\C_\M$ and observe that $\C_{\N^c}\cap\C_\M = \N^\perp \cap {\M^c}^\perp = (\N \vee \M^c)^\perp$ because endomorphisms in $\C$ are normal. Now by normality of $\N\vee\M^c$ in $\M_0$ we have that $\rho\in\cU_\C(\N,\M^c)^\perp$, i.e., $\rho(u) = u$ for every unitary generator $u\in\cU_\C(\N,\M^c)$. On the other hand for every $\sigma\in\C_\N$ and $\tilde\sigma := \Ad_u \sigma\in\C_{\M^c}$ we have $\eps_{\rho,\tilde\sigma} = \oneop$ by assumption (iv), i.e., $\rho(u) = u \eps_{\rho,\sigma}$ by naturality of the braiding, hence $\eps_{\rho,\sigma} = \oneop$. Again by (iv) we have $\eps_{\sigma,\rho} = \oneop$ and by (iii) $\C_\N\simeq\C$ from which we can conclude that $\rho$ has vanishing monodromy with every sector, hence $\rho\in\catVec$ by modularity of $\C$, showing the first statement.

The second statement follows using modular theory on abstract points
of $\M_0$, see Example \ref{ex:fuzzypts}, \cite[Prop.\
2.8]{Reh00-1}. Let $\sigma_t^{\omega} := \Ad_{\Delta_\omega^{it}}$,
$t\in\RR$ be the modular group of $\M_0$ associated to the vacuum
state $\omega$ of the net, we know that if $p$ is an abstract point of
$\M_0$ then $\sigma_t^{\omega}(p)$, $t\in\RR$ are also abstract
points. Furthermore $t\mapsto\sigma_t^\omega$ respects $\M_0$ and the
normality property for subalgebras of $\M_0$, hence maps $\cF$ onto
$\cF$ because $(\sigma_t^\omega)^{-1} = \sigma_{-t}^\omega$ and we
conclude $\sigma_t^\omega (\M_0^{\holo}) = \M_0^{\holo}$,
$t\in\RR$. By Takesaki's theorem \cite{Tak72} we have a faithful
normal conditional expectation $E:\M_0\rightarrow\M_0^{\holo}$
intertwining $E\circ \sigma_t^\omega = \sigma_t^\varphi\circ E$,
$t\in\RR$, where $\varphi$ is the faithful normal state obtained by
restricting $\omega$ to $\M_0^{\holo}$ and $\sigma_t^\varphi$ is the
associated modular group, see \cite[Sec. 10]{Str81}. Now the vacuum
state $\omega$ is given by the unique vector invariant under the group
of $I_0$-preserving dilations by \cite[Cor.\ B.2]{GuLo96}. This,
together with the Bisognano-Wichmann property \cite[Prop.\
1.1]{GuLo96}, imply that $t\mapsto\sigma_t^\omega$ is ergodic on
$\M_0$, hence $t\mapsto\sigma_t^\varphi$ is ergodic on
$\M_0^{\holo}$. In other words, $\varphi$ has trivial centralizer,
then by \cite[Prop.\ 6.6.5]{Lon2} $\M_0^{\holo}$ is a \emph{factor} of
type $\III_1$ or trivial $\M_0^{\holo} = \CC$. The same holds for
$(\M_0^{\holo})^c$. In particular, $\M_0^{\holo}$ being a subfactor of $\M_0$, we can apply \cite[Cor.\ 1]{Tak72} to get the splitting of $\M_0^{\holo} \vee (\M_0^{\holo})^c$ as von Neumann tensor product, completing the proof of the second statement.
\end{proof}

\section{Comparability of abstract points}\label{sec:comparability}

In the previous sections we analysed the braiding condition (iv) in
Definition \ref{def:apts}: 
$\eps_{\rho,\sigma} = \oneop$ on honest and abstract points of a net
$\{\A\}$, see Eq.\ (\ref{eq:QFTbraiding}), Lemma
\ref{lem:abstrchargetranspo}, Proposition
\ref{prop:naturalityiefinmanyeqs}, and showed how it
can be led far away from geometry in Section \ref{sec:fuzzyapts}.

In this section we draw some of its consequences, as in the proof
Proposition \ref{prop:holotensorsplit}, and to do so we introduce
\emph{comparability} $p\sim q$ of abstract points, along with
an order relation $p<q$ compatible with the geometric ordering of 
honest points. The terminology is motivated by the fact that two abstract 
points $p\sim q$ in a \emph{prime} conformal net are necessarily 
$p<q$ or $q<p$ or $p=q$, see Proposition \ref{prop:totalorderapts}. 
The order symbols should be intended as inclusions of relative 
complement algebras of $p,q$ in $\M_0$.

Let $p = (\N,\N^c)$, $q = (\M,\M^c)$ be two abstract points of $\M_0$ as in Definition \ref{def:apts} and $(\cR,\cS)$ be any pair of elements from $\{\N,\N^c,\M,\M^c\}$. Similarly to Eq.\ (\ref{eq:abstrchargetranspos}) we have that the von Neumann algebras of unitary charge transporters
\begin{equation}\label{eq:vNtranspo}\cU_{\C} (\cR,\cS) := \vN\{u\in \Hom_{\C}(\rho,\tilde\rho)\cap\cU(\M_0),\; \rho\in\C_\cR, \; \tilde\rho\in\C_\cS\}\end{equation}
always sit in between
$$\cR\vee\cS \subset \cU_{\C} (\cR,\cS) \subset (\cR\vee\cS)^{cc},$$
in particular $\cU_{\C} (\cR,\cS) ^{cc} = (\cR\vee\cS)^{cc}$. Hence
asking \emph{normality} of (\ref{eq:vNtranspo}) in $\M_0$ is
equivalent to asking that charge transporters \emph{generate} as von
Neumann algebras the relative commutants, cf.\ \cite[Cor.\
4.3]{Mue99}, \cite[Thm.\ 33]{KLM01}, i.e., $\cU_{\C} (\cR,\cS) =
(\cR\vee\cS)^{cc} = (\cR^c\cap\cS^c)^c$.
 
Notice that, e.g., $\cU_{\C} (\N,\N)$ and $\cU_{\C} (\N,\N^c)$ are always normal in $\M_0$ by (ii) and that $\cU_{\C} (\cR,\cS) = \cU_{\C} (\cS,\cR)$ by definition.

\begin{lemma}\label{lem:intcats}
In the above notation, assume that $\cU_\C(\cR,\cS)$ is normal in $\M_0$ for every pair $(\cR,\cS)$ of elements in $\{\N,\N^c,\M,\M^c\}$, then
\begin{itemize} \itemsep0mm
\item $\C_{\N\cap\M} = \C_\N \cap \C_\M$ and $\C_{\N^c\cap\M^c} = \C_{\N^c} \cap \C_{\M^c}$.
\item $\C_{\N\cap\M^c} \subset \C_\N \cap \C_{\M^c}$ and $\rho\in\C_{\N\cap\M^c}$ if and only if $\rho$ is an inner endomorphism of $\C$; in symbols: $\C_{\N\cap\M^c} = (\C_\N \cap \C_{\M^c}) \cap \catVec$. Similarly $\C_{\M\cap\N^c} = (\C_\M \cap \C_{\N^c}) \cap \catVec$.
\end{itemize}
\end{lemma}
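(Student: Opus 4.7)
The plan is to use the hypothesis that each $\cU_\C(\cR,\cS)$ is normal, together with condition (ii) of the two abstract points, to obtain the identifications
\begin{align*}
(\N\cap\M)^c &= (\N^c\vee\M^c)^{cc} = \cU_\C(\N^c,\M^c),\\
(\N^c\cap\M^c)^c &= (\N\vee\M)^{cc} = \cU_\C(\N,\M),\\
(\N\cap\M^c)^c &= (\N^c\vee\M)^{cc} = \cU_\C(\N^c,\M),
\end{align*}
since $(\cR^c\vee\cS^c)^c=\cR\cap\cS$ by (ii). Membership of $\rho$ in each $\C_{\cR\cap\cS}$ is then tested on the unitary charge transporters generating the corresponding $\cU_\C$, and the action of $\rho$ on a single charge transporter is controlled by naturality of the DHR braiding (in its ordinary and opposite forms) combined with condition (iv) of the two abstract points.

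For the first item, I would prove the nontrivial inclusion $\C_\N\cap\C_\M\subseteq\C_{\N\cap\M}$ by taking $\rho\in\C_\N\cap\C_\M$ and a unitary $u\in\Hom_\C(\tau,\tilde\tau)$ with $\tau\in\C_{\N^c}$, $\tilde\tau\in\C_{\M^c}$, and applying the opposite-braiding naturality to write $\rho(u)=\eps_{\rho,\tilde\tau}^{-1}\,u\,\eps_{\rho,\tau}$. Condition (iv) for $(\N,\N^c)$ gives $\eps_{\rho,\tau}=\oneop$, and (iv) for $(\M,\M^c)$ gives $\eps_{\rho,\tilde\tau}=\oneop$, hence $\rho(u)=u$; normality of $\rho$ on $\M_0$ extends this to the von Neumann algebra generated by such $u$'s, which equals $(\N\cap\M)^c$ by the normality hypothesis. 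The reverse inclusion is immediate from $(\N\cap\M)^c\supseteq\N^c\vee\M^c$. The statement $\C_{\N^c\cap\M^c}=\C_{\N^c}\cap\C_{\M^c}$ is proved identically, using the ordinary naturality identity $\rho(u)=\eps_{\tilde\tau,\rho}\,u\,\eps_{\tau,\rho}^{-1}$ with $\tau\in\C_\N$, $\tilde\tau\in\C_\M$, and (iv) trivializing $\eps_{\tau,\rho}$ and $\eps_{\tilde\tau,\rho}$.

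For the second item, $\C_{\N\cap\M^c}\subseteq\C_\N\cap\C_{\M^c}$ is immediate. For the characterization via $\catVec$, one direction is direct: an inner endomorphism $\rho=\sum_i v_i(\cdot)v_i^*$ lying in $\C_\N\cap\C_{\M^c}$ must have each $v_i\in\N^{cc}\cap\M^c=\N\cap\M^c$ by (ii), and hence acts trivially on $(\N\cap\M^c)^c$. For the converse, take $\rho\in\C_{\N\cap\M^c}$: by the normality hypothesis $\rho$ fixes every unitary $u\in\Hom_\C(\tau,\tilde\tau)$ with $\tau\in\C_{\N^c}$, $\tilde\tau\in\C_\M$. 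Inserting $\rho(u)=u$ into both forms of naturality and using (iv) of the two abstract points to trivialize one braiding factor at a time, I would read off $\eps_{\tau,\rho}=\oneop$ for every $\tau\in\C_{\N^c}$ and $\eps_{\rho,\tilde\tau}=\oneop$ for every $\tilde\tau\in\C_\M$; together with the ``free'' trivializations $\eps_{\rho,\tau}=\oneop$ (from $\rho\in\C_\N$) and $\eps_{\tilde\tau,\rho}=\oneop$ (from $\rho\in\C_{\M^c}$) this shows that $\rho$ has trivial monodromy with every sector represented in $\C_{\N^c}$. Essential surjectivity of the inclusion $\C_{\N^c}\hookrightarrow\C$ (condition (iii)) and modularity of $\C$ then force $\rho\in\catVec$; the symmetric statement for $\C_{\M\cap\N^c}$ follows by swapping the roles of $(\N,\N^c)$ and $(\M,\M^c)$.

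The main obstacle is the asymmetry of (iv), which trivializes only one direction of the braiding per abstract point; the two-sided monodromy vanishing required for the modularity conclusion is recovered by playing off the two naturality formulas for the braiding against the fact that, by normality, $\rho$ is forced to fix \emph{all} unitary charge transporters in $\cU_\C(\N^c,\M)$, and by exhausting every sector of $\C$ via condition (iii).
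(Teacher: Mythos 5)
Your proposal is correct and follows essentially the same route as the paper: the easy inclusions by duality, the hard ones by testing $\rho$ on the unitary charge transporters that generate $(\cR\cap\cS)^c$ under the normality hypothesis, trivializing the braiding factors via naturality and condition (iv), and invoking (iii) plus modularity of $\C$ in the left-right case. The only (harmless) deviation is the ``inner $\Rightarrow$ trivial action'' direction of the second item, which you argue directly via the Cuntz isometries $v_i\in\N\cap\M^c$ whereas the paper gets it from degeneracy, $\eps_{\sigma,\rho}=\eps_{\rho,\sigma}^*=\oneop$.
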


\begin{proof}
Consider the intersection of left-left relative complements $\C_\N \cap \C_\M$. The inclusion $\C_{\N\cap\M} \subset \C_\N \cap \C_\M$ reads ${(\N\cap\M)^c}^\perp \subset {\N^c}^\perp \cap {\M^c}^\perp = (\N^c \vee \M^c)^\perp$ hence follows easily by taking duals of $\N^c \vee \M^c \subset (\N^c \vee \M^c)^{cc} = (\N\cap\M)^c$. The opposite inclusion follows from the braiding condition and normality assumption on charge transporters. Take $\rho\in\C_\N \cap \C_\M$ then by (iv) we have $\eps_{\rho,\tilde\sigma} = \oneop$ for every $\tilde\sigma := \Ad_u \sigma\in\C_{\M^c}$ where $\sigma\in\C_{\N^c}$ and $u$ is a unitary generator of $\cU_\C(\N^c,\M^c)$. Hence $\rho(u) = u\eps_{\rho,\sigma}$ by naturality of the braiding. But also $\eps_{\rho,\sigma} = \oneop$ by assumption (iv) and $\rho\in\cU_\C(\N^c,\M^c)^\perp = (\N\cap\M)^{c\perp}$ follows, hence we have the first statement. The right-right case follows similarly.

In the left-right case the inclusion $\C_{\N\cap\M^c} \subset \C_{\N} \cap \C_{\M^c}$ can be proper, as shown by Proposition \ref{prop:dualofsergio} in the honest case. Take $\rho\in\C_{\N} \cap \C_{\M^c}$, by normality $\rho\in\C_{\N \cap \M^c}$ if and only if $\rho(u) = u$ for every unitary generator $u\in\cU_\C(\N^c,\M)$. But now by (iv) we have $\eps_{\tilde\sigma,\rho} = \oneop$ for every $\tilde\sigma := \Ad_u \sigma\in\C_\M$ where $\sigma\in\C_{\N^c}$, $u\in\cU_\C(\N^c,\M)$, hence $\rho(u) = u \eps_{\sigma,\rho}^*$ together with $\eps_{\rho,\sigma} = \oneop$. By assumption (iii) $\C_{\N^c}\simeq\C$ and modularity of $\C$, we can conclude that $\rho\in\C_{\N\cap\M^c}$ if and only if $\rho\in\catVec$, and the proof is complete.
\end{proof}

As already remarked, given a pair of abstract points $p=(\N,\N^c)$,
$q=(\M,\M^c)$ of $\M_0$, the algebras $\N\cap\M^c$ can be viewed as
abstract \lq\lq interval algebras'' of $\M_0$ with associated \lq\lq
local" $\DHR$ subcategories $\C_{\N}\cap\C_{\M^c}$.
 
Denote by $\Delta(\C)$ the \emph{spectrum} of $\C$ and let $\cU_{\C_{\N^c}\cap\C_\M} (\N,\M^c)\subset \cU_\C (\N,\M^c)$ be the subalgebra generated by $\rho$-charge transporters associated to sectors $[\rho]\in\Delta(\C_{\N}\cap\C_{\M^c})$. The vacuum $[\id]$ is always in the spectrum, hence $\cU_{\C_{\N^c}\cap\C_\M} (\N,\M^c)$ is also intermediate in $\N\vee\M^c \subset(\N\vee\M^c)^{cc}$. 

\begin{lemma}\label{lem:modularityintcats}
In the above notation, assume that $\cU_{\C_{\N^c}\cap\C_\M} (\N,\M^c)$ and $\cU_{\C_{\M^c}\cap\C_\N} (\M,\N^c)$ are normal in $\M_0$, then $\C_{\N^c}\cap\C_\M$ and $\C_{\M^c}\cap\C_\N$ have \lq\lq modular spectrum'', i.e.
$$\Z_{\C_{\N^c}\cap\C_\M}(\C_{\N^c}\cap\C_\M) \subset \catVec,\quad \Z_{\C_{\M^c}\cap\C_\N}(\C_{\M^c}\cap\C_\N) \subset \catVec.$$
\end{lemma}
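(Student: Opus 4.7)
By symmetry it suffices to prove $\Z_{\C_{\N^c}\cap\C_\M}(\C_{\N^c}\cap\C_\M)\subset\catVec$; the second inclusion is obtained by interchanging the roles of $p=(\N,\N^c)$ and $q=(\M,\M^c)$. Let $\rho$ lie in this centralizer. Since $\C$ is a UMTC (complete rationality), an object with trivial $\C$-monodromy against every sector of $\C$ must be inner, so the plan is to verify that $\eps_{\sigma,\rho}\eps_{\rho,\sigma}=\oneop$ for every $[\sigma]\in\Delta(\C)$.

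The first step is a master computation. Given $[\sigma]\in\Delta(\C)$, use (iii) to pick representatives $\sigma_L\in\C_\N$ and $\sigma_R\in\C_{\M^c}$ of this sector together with a unitary $u\in\Hom_\C(\sigma_L,\sigma_R)\cap\cU(\M_0)\subset\cU_\C(\N,\M^c)$. Condition (iv) at $p$ gives $\eps_{\sigma_L,\rho}=\oneop$ and condition (iv) at $q$ gives $\eps_{\rho,\sigma_R}=\oneop$; naturality of the braiding in the $u$-slot then yields
\[
\eps_{\rho,\sigma_L}=u^*\rho(u),\qquad \eps_{\sigma_R,\rho}=\rho(u)u^*,
\]
so the full $\C$-monodromy of $\rho$ against $[\sigma]$ equals $\eps_{\rho,\sigma_L}=u^*\rho(u)$ and is trivial if and only if $\rho(u)=u$.

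The second step handles sectors in $\Delta(\C_{\N^c}\cap\C_\M)$ using the centralizer hypothesis. For such a sector pick $\sigma'\in\C_{\N^c}\cap\C_\M$, a representative $\sigma_L'\in\C_\N$, and a unitary $w\in\Hom_\C(\sigma_L',\sigma')$. Condition (iv) at $p$ combined with naturality produces $\rho(w)=\eps_{\sigma',\rho}w$, while the centralizer assumption yields $\eps_{\rho,\sigma'}\eps_{\sigma',\rho}=\oneop$. Substituting both into the naturality identity $w\,\eps_{\rho,\sigma_L'}=\eps_{\rho,\sigma'}\rho(w)$ gives $\eps_{\rho,\sigma_L'}=w^*\eps_{\rho,\sigma'}\eps_{\sigma',\rho}w=\oneop$, and the master computation then forces $\rho(u')=u'$ for any associated unitary charge transporter $u'$. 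Since $\rho$ is ultraweakly continuous on $\M_0$ (Proposition \ref{prop:abstractresfun}), it fixes the whole subalgebra $\cU_{\C_{\N^c}\cap\C_\M}(\N,\M^c)$.

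The third step bridges from $\Delta(\C_{\N^c}\cap\C_\M)$ to all of $\Delta(\C)$ via the normality hypothesis. Writing $A:=\N\vee\M^c$ and $B:=\cU_{\C_{\N^c}\cap\C_\M}(\N,\M^c)$, the sandwich $A\subset B\subset A^{cc}$ gives $B^c=A^c$ upon taking relative commutants in $\M_0$, and hence $B^{cc}=A^{cc}$ automatically; the normality assumption $B=B^{cc}$ thus upgrades to the equality $B=A^{cc}$, which contains $\cU_\C(\N,\M^c)$ in full. Applying the master computation to an arbitrary $[\sigma]\in\Delta(\C)$ now yields $\eps_{\rho,\sigma_L}=\oneop$, and combined with the always-available $\eps_{\sigma_L,\rho}=\oneop$ shows that $\rho$ has trivial monodromy against every $\C$-sector; by modularity $\rho\in\catVec$. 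The delicate point is Step 2: one must extract the one-sided triviality $\eps_{\rho,\sigma_L'}=\oneop$ from the centralizer hypothesis, which \emph{a priori} furnishes only vanishing monodromy. This is why condition (iv) at $p$ is used twice, once directly on $\sigma_L'$ and once on the intertwiner $w$, so that the cancellation $\eps_{\rho,\sigma'}\eps_{\sigma',\rho}=\oneop$ can be inserted.
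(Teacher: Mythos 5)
Your proof is correct and follows essentially the same route as the paper's: both arguments express the monodromy of $\rho$ against an arbitrary sector as $u^*\rho(u)$ for a unitary charge transporter $u$ between a $\C_\N$-representative and a $\C_{\M^c}$-representative (via condition (iv) at both points and naturality), use the centralizer hypothesis to show $\rho$ fixes the generators of $\cU_{\C_{\N^c}\cap\C_\M}(\N,\M^c)$, invoke normality to upgrade this to all of $(\N\vee\M^c)^{cc}\supset\cU_\C(\N,\M^c)$, and conclude by modularity of $\C$. The only cosmetic difference is bookkeeping: the paper factors its transporters through the middle category as $w=ux^*$, whereas you anchor the computation at the left representative $\sigma_L$ and extract the one-sided triviality $\eps_{\rho,\sigma_L'}=\oneop$ directly.
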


\begin{proof}
Let $\rho\in\C_{\N^c} \cap \C_\M$ such that $\eps_{\rho,\sigma} = \eps_{\rho,\sigma}^\op$ for all $\sigma\in\C_{\N^c} \cap \C_\M$. Inspired by \cite[Lem.\ 3.2]{Mue99} we can write $\eps_{\rho,\sigma} = u^*\rho(u)$ and $\eps_{\rho,\sigma}^\op = x^*\rho(x)$ where $u$ and $x$ are unitaries transporting $\sigma$ respectively to $\C_{\M^c}$ and $\C_{\N}$, see Lemma \ref{lem:abstrchargetranspo}. Hence triviality of the monodromy $\eps_{\rho,\sigma} = \eps_{\rho,\sigma}^\op$ is triviality of the action $\rho(ux^*) = ux^*$. Moreover every generator $w$ of $\cU_{\C_{\N^c}\cap\C_\M} (\N,\M^c)$ can be written as $w=ux^*$ with $u$ and $x$ as above. By normality $\cU_{\C_{\N^c}\cap\C_\M} (\N,\M^c) = (\N\vee\M^c)^{cc}$ hence, reversing the argument, one can drop the restriction $\sigma\in\C_{\N^c} \cap \C_\M$
and get $\eps_{\rho,\sigma} = \eps_{\rho,\sigma}^\op$ for all $\sigma\in\C$. By modularity of $\C$ we get $\rho\in\catVec$. Analogously interchanging $\N$ and $\M$. 
\end{proof}

Normality of $\cU_{\C_{\N^c}\cap\C_\M} (\N,\M^c)$ obviously implies normality of $\cU_{\C} (\N,\M^c)$.
We are now ready to introduce the notion of comparability of two abstract points $p,q$ mentioned in the beginning of this section.

\begin{definition}\label{def:comparapts}
Let $\{\A\}$ be a completely rational conformal net on the line. In the notation of Definition \ref{def:apts}, two abstract points $p = (\N,\N^c)$, $q = (\M,\M^c)$ of $\M_0$ are called \textbf{comparable} if they fulfill the following
\begin{itemize} \itemsep0mm
\item $\cU_{\C_{\cR^c}\cap\C_{\cS^c}}(\cR,\cS) = {\cU_{\C_{\cR^c}\cap\C_{\cS^c}}(\cR,\cS)}^{cc}$.
\item $\cR\vee\cS = (\cR\vee\cS)^{\perp\perp}$.
\end{itemize}
for every pair $(\cR,\cS)$ in $\{\N,\N^c,\M,\M^c\}$. In this case, we write $p \sim q$.
\end{definition}

Observe that $\cU_{\C_{\cR^c}\cap\C_{\cS^c}}(\cR,\cS)$ and $(\C_{\cR^c}\cap\C_{\cS^c})^\perp = (\cR\vee\cS)^{\perp\perp}$ are both intermediate algebras in the inclusions $\cR\vee\cS \subset (\cR\vee\cS)^{cc}$. Hence comparability means that these bounds are maximally, respectively minimally, saturated.

\begin{remark}\label{rmk:honestarecomp}
We have already motivated the normality condition on charge
transporters. Concerning biduality, it easily holds for left or right
local half-line algebras, see Proposition \ref{prop:dualofwedge},
Remark \ref{rmk:dualofwedgecat}, and for two-interval algebras, as we
have shown in Proposition \ref{prop:dualoflocalcat}. Notice also that
comparability is manifestly reflexive, symmetric and invariant under
isomorphism of nets (but not manifestly transitive). 
\end{remark}

\begin{proposition}\label{prop:totalorderapts}
Let $\{\A\}$ be a prime conformal net on the line (Definition
\ref{def:primeconfnet}) and take two abstract points $p = (\N,\N^c)$,
$q = (\M,\M^c)$ of $\M_0$. If $p\sim q$ then either $p<q$ or $q<p$ or
$p=q$, i.e., respectively $\N\subset\M$ or $\M\subset\N$ or $\N=\M$.

In particular, in the case of a prime conformal net, comparability of $p$ and $q$ can be checked on the two pairs $(\N,\M^c)$, $(\M,\N^c)$.
\end{proposition}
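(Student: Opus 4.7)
The plan is to use primality of $\C$ to force the two intersection subcategories $\D := \C_{\N^c}\cap\C_\M$ and $\D' := \C_{\M^c}\cap\C_\N$ each to be equivalent, as UBTCs, to $\C$ or to $\catVec$, and then to convert the $\catVec$ alternative into normality of $\N\vee\M^c$ (respectively $\M\vee\N^c$) in $\M_0$ via the biduality clause of comparability, so that the second bullet of Definition \ref{def:primeconfnet} delivers an inclusion between $\N$ and $\M$. Under $p\sim q$ the normality clauses let Lemma \ref{lem:modularityintcats} apply to both $\D$ and $\D'$, giving them modular spectrum; combined with the observation that $\D$ and $\D'$ are closed under subobjects, direct sums and conjugates (constructed via isometries in the common relative commutants $\N^c\cap\M$, respectively $\M\cap\N^c$, along the lines of Proposition \ref{prop:rigidityfusion}), they qualify as UMTC subcategories of $\C$, so primality forces each of them to be $\simeq\C$ or $\simeq\catVec$ as UBTCs.

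Next, the simultaneous case $\D\simeq\C$ and $\D'\simeq\C$ must be ruled out. The key input is the inclusion $\D'\subset\Z_\C(\D)$, which follows from (iv) applied to \emph{both} abstract points: for $\tau\in\D'$ and $\rho\in\D$, the memberships $\tau\in\C_\N$, $\rho\in\C_{\N^c}$ together with (iv) for $p$ yield $\eps_{\tau,\rho}=\oneop$, while $\rho\in\C_\M$, $\tau\in\C_{\M^c}$ together with (iv) for $q$ yield $\eps_{\rho,\tau}=\oneop$, hence trivial monodromy. If $\D\simeq\C$, the factorization $\C\simeq\D\boxtimes\Z_\C(\D)$ of Proposition \ref{prop:primefactorizUMTCs} together with a dimension count forces $\Z_\C(\D)\simeq\catVec$, so $\D'\subset\Z_\C(\D)$ sits inside $\catVec$ and therefore $\D'\simeq\catVec$.

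The heart of the argument is the implication $\D\simeq\catVec\Rightarrow\N\vee\M^c$ normal in $\M_0$. Writing $\D=(\N\vee\M^c)^\perp$, $\D$ always contains every $\Ad_u$ with $u\in\cU(\N^c\cap\M)=\cU((\N\vee\M^c)^c)$; and the equivalence $\D\simeq\catVec$ says that $\D$ contains \emph{only} objects isomorphic (inside $\C$) to direct sums of the tensor unit, so $\D$ is exhausted by these inner endomorphisms, with sum-structure realized by isometries in $\N^c\cap\M$. Consequently $\D^\perp$ coincides with the common fixed-point algebra $(\N^c\cap\M)^c=(\N\vee\M^c)^{cc}$, and the biduality clause of $p\sim q$ identifies $\D^\perp=(\N\vee\M^c)^{\perp\perp}$ with $\N\vee\M^c$, giving $\N\vee\M^c=(\N\vee\M^c)^{cc}$, i.e.\ normality in $\M_0$. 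The second bullet of Definition \ref{def:primeconfnet} then yields $\M\subset\N$, and a symmetric argument turns $\D'\simeq\catVec$ into $\N\subset\M$; combining, the three surviving configurations $(\D,\D')\in\{(\C,\catVec),(\catVec,\C),(\catVec,\catVec)\}$ produce respectively $p<q$, $p>q$ and $p=q$.

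For the ``in particular'' clause I would unroll the consequences of the ordering just obtained: for the self-pairs $(\N,\N^c)$ and $(\M,\M^c)$ the subcategory $\C_{\cR^c}\cap\C_{\cS^c}$ degenerates to $\{\id\}$, making the charge-transporter algebra scalar (trivially normal) and biduality reducing to $\M_0^{\perp\perp}=\M_0$ via $\M_0^\perp=\{\id\}$; and under $\N\subset\M$ the same-side pairs $(\N,\M)$ and $(\N^c,\M^c)$ collapse to the single-subfactor bidualities $\M^{\perp\perp}=\M$ and $(\N^c)^{\perp\perp}=\N^c$, which are immediate from $\C_{\M^c}^\perp=\M$ via inner automorphisms $\Ad_u$ with $u\in\cU(\M^c)$. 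The principal technical hurdle I expect is the step $\D^\perp=(\N\vee\M^c)^{cc}$, where the categorical equivalence $\D\simeq\catVec$ has to be translated into the algebraic statement that no non-inner DHR endomorphism can fix $\N\vee\M^c$ pointwise --- this is exactly the bookkeeping that couples primality of $\C$, modular spectrum of the intersection, and the abstract-point axioms of Definition \ref{def:apts}.
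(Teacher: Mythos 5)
Your overall architecture coincides with the paper's: form the intersection categories $\C_1=\C_{\N^c}\cap\C_\M$ and $\C_2=\C_{\M^c}\cap\C_\N$, derive the mutual centralizer inclusions from condition (iv) applied to both points, get modular spectrum from Lemma \ref{lem:modularityintcats}, invoke primality of $\C$ to force one of them into $\catVec$, and then combine biduality (from comparability) with the second bullet of Definition \ref{def:primeconfnet} to obtain $\M\subset\N$ or $\N\subset\M$. The translation of ``$\C_1\subset\catVec$'' into normality of $\N\vee\M^c$ via $(\N^c\cap\M)^c=(\N\vee\M^c)^{cc}$ and $(\N\vee\M^c)^{\perp\perp}=\N\vee\M^c$ is also exactly the paper's closing step (there phrased through Lemma \ref{lem:intcats} and the argument of Proposition \ref{prop:4tuple}).

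There is, however, a genuine gap at the point where you apply primality. Definition \ref{def:primeUMTC} quantifies over full unitary \emph{fusion} subcategories that are UMTCs, so you must know that $\C_1$ and $\C_2$ are closed under subobjects, direct sums and conjugates before primality says anything about them. You assert this ``via isometries in the common relative commutants $\N^c\cap\M$, along the lines of Proposition \ref{prop:rigidityfusion}'', but that proposition is proved for a pair $(\N,\N^c)$ satisfying conditions (i)--(ii) of Definition \ref{def:apts}, and the pair $(\N^c\cap\M,\,\N\vee\M^c)$ is not known to satisfy them: conormality $(\N^c\cap\M)\vee(\N\vee\M^c)=\M_0$ is not available, the factoriality and type of $\N^c\cap\M$ are unknown (it can even be $\CC\oneop$, e.g.\ when $\N=\M$), and normality of $\N\vee\M^c$ --- which your construction of subobjects and conjugates inside $\C_1$ would implicitly lean on --- is precisely the conclusion you are trying to reach. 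Concretely, for $\rho\in\C_1$ a conjugate can be chosen in $\C_{\N^c}$ \emph{or} in $\C_\M$ by condition (iii), but not obviously in both simultaneously; the paper explicitly flags that neither $\C_1$ nor $\C_2$ is a priori rigid. The fix (and the paper's route) is to pass to the smallest replete fusion subcategories $\tilde\C_1,\tilde\C_2\supset\C_1,\C_2$, check by a direct computation with solutions $R,\overline R$ of the conjugate equations lying in $\N^c$ that the vanishing-monodromy relations and the modular-spectrum property survive the closure, apply primality to $\tilde\C_1,\tilde\C_2$, and only then conclude $\C_1\subset\tilde\C_1=\catVec$. With that repair your argument goes through; the remaining steps, including the rule-out of the simultaneous case via $\C_2\subset\Z_\C(\C_1)$ and the reduction of the ``in particular'' clause to the two mixed pairs, are sound.
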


\begin{proof}
The idea of the proof is that $\N^c\cap\M$ and $\M^c\cap\N$ are, a priori, abstract interval algebras of two \emph{different} tensor factors of the net. Call for short $\C_1 := \C_{\N^c} \cap \C_\M$ and $\C_2 := \C_{\M^c} \cap \C_\N$ and observe that
\begin{equation}\label{eq:C1inC2c}\C_1 \subset \Z_{\C}(\C_2), \quad \C_2 \subset \Z_{\C}(\C_1)\end{equation}
because for every $\rho\in\C_1$, $\sigma\in\C_2$ we have $\eps_{\rho,\sigma} = \oneop =$ and $\eps_{\sigma,\rho} = \oneop$ by condition (iv), in particular $\eps_{\sigma,\rho}\eps_{\rho,\sigma} = \oneop$. We also have
\begin{equation}\label{eq:C1capC1cinVec}\Z_{\C_1}(\C_1)\subset\catVec, \quad \Z_{\C_2}(\C_2)\subset\catVec\end{equation}
by Lemma \ref{lem:modularityintcats}. Notice that it can be
$\C_1=\C_2=\{\id\}$, e.g., if $\N=\M$. In order to invoke primality of
the $\DHR$ category $\C$ as a UMTC, we take the closures of $\C_1,
\C_2\subset\C$ under conjugates, subobjects, finite direct sums,
tensor products and unitary isomorphism classes. Denote them
respectively by $\tilde\C_1$, $\tilde\C_2$. In other words, they are
the smallest replete fusion subcategories of $\C$ containing $\C_1$,
$\C_2$ respectively. Thanks to \cite[Thm.\ 3.2]{Mue03}, see also
\cite[Thm.\ 3.10]{DGNO10}, they are characterized as double braided
relative commutant subcategories of $\C$, i.e.
$$\tilde\C_1 = \Z_{\C}(\Z_\C(\tilde\C_1)),\quad \tilde\C_2 = \Z_{\C}(\Z_\C(\tilde\C_2)).$$
Now inclusions (\ref{eq:C1inC2c}) and (\ref{eq:C1capC1cinVec}) clearly
extend to subobjects, direct sums, tensor products and unitary
isomorphism classes, because the vanishing of the monodromy is a
condition stable under such operations, see \cite[Sec. 2.2]{Mue00},
and $\catVec$ is a replete fusion subcategory of $\C$. We need to
check that (\ref{eq:C1inC2c}) and (\ref{eq:C1capC1cinVec}) extend to
conjugates because neither of the two sides of (\ref{eq:C1inC2c}) nor
the l.h.s.\ of (\ref{eq:C1capC1cinVec}) are a priori rigid. Let
$\rho\in\C_1$, $\sigma\in\C_2$ and choose a conjugate $\rhobar\in\C$
of $\rho$, we want to show that
$\eps_{\sigma,\rhobar}\eps_{\rhobar,\sigma} = \oneop$. By condition
(iii) we can assume $\rhobar\in\C_{\N^c}$ up to unitary isomorphism,
equivalently we could have assumed $\rhobar\in\C_{\M}$. By Proposition
\ref{prop:abstractresfun} we have that every solution of the conjugate
equations $R\in\Hom_{\C}(\id,\rhobar\rho)$, $\overline
R\in\Hom_{\C}(\id,\rho\rhobar)$ for $\rho$, $\rhobar$, see
\cite[Sec. 2]{LoRo97}, lies in $\N^c$, in particular $\sigma(R) = R$,
$\sigma(\overline R) = \overline R$. Hence we get
$\eps_{\rhobar,\sigma} = R^* \rhobar(\eps_{\rho,\sigma}^*)
\rhobar\sigma(\overline R) = R^* \rhobar(\overline R) = \oneop$ and
similarly $\eps_{\sigma,\rhobar} = \rhobar\sigma(\overline R^*)
\rhobar(\eps_{\sigma,\rho}^*) R = \rhobar(\overline R^*) R =
\oneop$. In particular, $\rhobar$ and $\sigma$ have vanishing
monodromy.
 
Summing up we have $\tilde\C_1 \subset \Z_\C(\C_2)$ and similarly $\tilde\C_2 \subset \Z_\C(\C_1)$. Moreover, given $\sigma\in\C_2$ choose a conjugate $\sigmabar\in\C$ and observe that the vanishing of the monodromy of $\sigmabar$ and every $\rho$ in $\tilde\C_1$ is equivalent to the vanishing of the monodromy of $\sigma$ and every $\rho$, by rigidity of $\tilde\C_1$, see \cite[Eq.\ (2.17)]{Mue00}. Hence we have  
\begin{equation}\label{eq:C1inC2creplete}\tilde\C_1 \subset \Z_\C(\tilde\C_2),\quad \tilde\C_2 \subset \Z_\C(\tilde\C_1)\end{equation}
and the two inclusions are equivalent by the double braided relative commutant theorem. We can extend also inclusions (\ref{eq:C1capC1cinVec}) by observing that $\Z_{\C_1}(\tilde\C_1) \subset \Z_{\C_1}(\C_1)\subset\catVec$ and that, given $\rho\in\C_1$ and a conjugate $\rhobar\in\C$, the vanishing of the monodromy of $\rhobar$ and every $\sigma$ in $\tilde\C_1$ is equivalent, as above, to the vanishing of the monodromy of $\rho$ and every $\sigma$. Thus we have $\rho\in\catVec$, hence $\rhobar\in\catVec$, and we conclude
\begin{equation}\Z_{\tilde\C_1}(\tilde\C_1) = \catVec, \quad \Z_{\tilde\C_2}(\tilde\C_2) = \catVec\end{equation}
which means modularity for the replete fusion subcategories $\tilde\C_1, \tilde\C_2\subset\C$. By primality of $\C$ as a UMTC, see Definition \ref{def:primeUMTC}, the two subcategories are either $\C$ or $\catVec$ and by the inclusions (\ref{eq:C1inC2creplete}) we can assume $\tilde\C_1 = \catVec$, up to exchanging the roles of $\N$ and $\M$.
 
In particular, we obtain $\C_1 = \C_{\N^c}\cap\C_\M \subset \catVec$, hence
$$\C_{\N^c\cap\M} = \C_{\N^c}\cap\C_\M$$
by Lemma \ref{lem:intcats}, i.e., ${(\N^c\cap\M)^c}^\perp = (\N \vee \M^c)^\perp$. 
Now by comparability we have a biduality relation $(\N \vee
\M^c)^{\perp\perp} = \N \vee \M^c$, while
${(\N^c\cap\M)^c}^{\perp\perp} = {(\N^c\cap\M)^c}$ follows by the same
argument as in Proposition \ref{prop:4tuple}. By taking
duals we have that $\N \vee \M^c$ is normal in $\M_0$, hence
$\M\subset\N$ by the primality assumption on the net. In particular, $\C_1 = \{\id\}$, and the proof is complete.
\end{proof}

As said before, normality of $\cU_{\C_{\N^c}\cap\C_\M} (\N,\M^c)$ is
equivalent to saying that the inclusion
$\N\vee\M^c\subset(\N\vee\M^c)^{cc}$ is generated by charge
transporters associated to sectors
$[\rho]\in\Delta(\C_{\N^c}\cap\C_\M)$. We could strengthen this
assumption by asking that the inclusion has the structure of a
Longo-Rehren inclusion associated with $\{[\rho]\in \Delta(\C_{\N^c}
\cap \C_{\M})\}$. This amounts to specifying not only the generators
of the extension, but also the algebraic relations among them
\cite[Eq.\ (15), Prop.\ 45]{KLM01}.

We show next that the latter can be derived, in our language of
abstract points, from the \emph{fusion} structure of the intersection
categories. However, we don't require, a priori, $\N\vee\M^c$ to split as a von Neumann tensor product, nor $\N$ and $\M^c$ to be commuting algebras.

\begin{proposition}
Let $\{\A\}$ be a completely rational conformal net on the line and take two abstract points $p=(\N,\N^c)$, $q=(\M,\M^c)$, in the notation of Definition \ref{def:apts}. If we assume that 
\begin{itemize} \itemsep0mm
\item $\cU_{\C_{\N^c}\cap\C_\M} (\N,\M^c)$ and $\cU_{\C_{\M^c}\cap\C_\N} (\M,\N^c)$ are normal in $\M_0$,
\item $\C_{\N^c}\cap\C_{\M}$ and $\C_{\M^c}\cap\C_{\N}$ are UFTCs in $\C$,
\item $\C_{\N}\cap\C_{\M} \simeq \C$ and $\C_{\N^c}\cap\C_{\M^c} \simeq \C$
\end{itemize}
then $\N\vee\M^c\subset (\N\vee\M^c)^{cc}$ and $\M\vee\N^c\subset (\M\vee\N^c)^{cc}$ have the structure of Longo-Rehren inclusions, in the sense that the generators of the extensions fulfill the relations \emph{\cite[Eq.\ (15)]{KLM01}}.
\end{proposition}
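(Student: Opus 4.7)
The plan is to construct the generators of $\N\vee\M^c \subset (\N\vee\M^c)^{cc}$ explicitly, mimicking the two-interval case of \cite[Thm.\ 33]{KLM01}, and then verify the Longo-Rehren relations by direct computation in $\C$. Enumerate the sectors of $\C_{\N^c}\cap\C_{\M}$ as $[\rho_i]$, $i=0,\ldots,n$, with $[\rho_0]=[\id]$. Using the equivalences $\C_\N\cap\C_\M\simeq\C$ and $\C_{\N^c}\cap\C_{\M^c}\simeq\C$, fix irreducible representatives $\rho_i^{(1)}\in\C_\N\cap\C_\M$ of $[\rho_i]$ and $\overline{\rho}_i^{(2)}\in\C_{\N^c}\cap\C_{\M^c}$ of the conjugate $[\overline\rho_i]$. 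By \rprop{rigidityfusion} these subcategories are rigid, so one can solve the conjugate equations in $\C$ and fix standard solutions $R_i\in\Hom_\C(\id,\overline{\rho}_i^{(2)}\rho_i^{(1)})$ and $\overline R_i\in\Hom_\C(\id,\rho_i^{(1)}\overline{\rho}_i^{(2)})$ with $\overline R_i^*\overline R_i = d_{\rho_i}\oneop$, setting $\overline R_0=\oneop$.

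First I would show $\overline R_i \in (\N\vee\M^c)^{cc} = (\N^c\cap\M)^c$. For $x\in\N^c\cap\M$ one has $\overline R_i x = \rho_i^{(1)}\overline{\rho}_i^{(2)}(x)\overline R_i$; since $\overline{\rho}_i^{(2)}\in\C_{\M^c}$ acts trivially on $\M\supset\N^c\cap\M$ and then $\rho_i^{(1)}\in\C_\N$ acts trivially on $\N^c\supset\N^c\cap\M$, the double image collapses to $x$, giving $[\overline R_i,x]=0$. Adapting the harmonic-expansion argument at the end of the proof of \rprop{dualoflocalcat} to the inclusion $\N\vee\M^c\subset(\N\vee\M^c)^{cc}$, and using the normality hypothesis $\cU_{\C_{\N^c}\cap\C_\M}(\N,\M^c) = (\N\vee\M^c)^{cc}$ together with a minimal conditional expectation onto $\N\vee\M^c$ built as in \cite[Prop.\ 5]{KLM01}, every $a\in(\N\vee\M^c)^{cc}$ admits a unique expansion
$$a = \sum_{i=0}^n a_i \overline R_i,\qquad a_i\in\N\vee\M^c,$$
with uniqueness following from the orthogonality $\overline R_i^*\overline R_j = \delta_{ij}d_{\rho_i}\oneop$.

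The decisive step is to express each product $\overline R_i\overline R_j$ in this form with the specific scalar coefficients of \cite[Eq.\ (15)]{KLM01}. The key input is condition (iv) applied to both crossings: $\eps_{\rho_j^{(1)},\overline{\rho}_i^{(2)}} = \oneop$ (since $\rho_j^{(1)}\in\C_\N$ and $\overline{\rho}_i^{(2)}\in\C_{\N^c}$) and $\eps_{\overline{\rho}_i^{(2)},\rho_j^{(1)}} = \oneop$; this lets one commute the middle factors of $\rho_i^{(1)}\overline{\rho}_i^{(2)}\rho_j^{(1)}\overline{\rho}_j^{(2)}$ and rewrite the composite as $(\rho_i^{(1)}\rho_j^{(1)})(\overline{\rho}_i^{(2)}\overline{\rho}_j^{(2)})$. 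The second hypothesis, that $\C_{\N^c}\cap\C_\M$ is a UFTC, combined with the equivalences of the third hypothesis, ensures that the fusion of $\rho_i^{(1)}\rho_j^{(1)}$ inside $\C_\N\cap\C_\M$ and of $\overline{\rho}_i^{(2)}\overline{\rho}_j^{(2)}$ inside $\C_{\N^c}\cap\C_{\M^c}$ reproduces the fusion rules of $\C$. Choose orthonormal Clebsch-Gordan bases $T_e^{(1)}\in\Hom(\rho_k^{(1)},\rho_i^{(1)}\rho_j^{(1)})$ and $\overline T_e^{(2)}\in\Hom(\overline{\rho}_k^{(2)},\overline{\rho}_i^{(2)}\overline{\rho}_j^{(2)})$, which by \rprop{abstractresfun} lie respectively in $\N$ and in $\M^c$; contracting $\overline R_i\overline R_j$ against these bases via the conjugate equations yields the Longo-Rehren expression of \cite[Eq.\ (15)]{KLM01}. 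The inclusion $\M\vee\N^c\subset(\M\vee\N^c)^{cc}$ is handled identically after interchanging the roles of $p$ and $q$.

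The main obstacle is the gauge-compatible bookkeeping in this quadratic computation: one must jointly normalize $R_i,\overline R_i$ and the bases $T_e^{(1)},\overline T_e^{(2)}$ so that the coefficients in the expansion of $\overline R_i\overline R_j$ match precisely those of \cite[Eq.\ (15)]{KLM01}. The ``mixed'' elements $T_e^{(1)}\overline{\rho}_i^{(2)}(\overline T_e^{(2)})$ that appear automatically lie in $\N\vee\M^c$ since $T_e^{(1)}\in\N$ and, by \rprop{abstractresfun}, $\overline{\rho}_i^{(2)}$ preserves $\M^c$, so $\overline{\rho}_i^{(2)}(\overline T_e^{(2)})\in\M^c$; the delicate part is rather the combinatorial matching of the conjugate-equation and Clebsch-Gordan normalizations with the tensor-form expansion specified in \cite[Eq.\ (15)]{KLM01}, together with checking that the triviality of braidings established in condition (iv) is strong enough to eliminate all residual monodromy phases that would otherwise spoil the scalar form of the coefficients.
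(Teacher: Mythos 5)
Your proposal follows essentially the same route as the paper's proof: the same generators $R_i\in\Hom_\C(\id,\rhobar_i\rho_i)$ with one tensor factor chosen in $\C_\N\cap\C_\M$ and the conjugate factor in $\C_{\N^c}\cap\C_{\M^c}$, the same commutation argument placing them in $(\N^c\cap\M)^c=(\N\vee\M^c)^{cc}$, and the same Clebsch--Gordan expansion of $R_iR_j$ over bases $w_\alpha\in\Hom_{\C_\N}(\rhobar_k,\rhobar_i\rhobar_j)\subset\N$ and $v_\beta\in\Hom_{\C_{\M^c}}(\rho_k,\rho_i\rho_j)\subset\M^c$, with scalar coefficients arising because $[\id]\prec[\rhobar_k][\rho_k]$ has multiplicity one; the paper, too, is content with fixing the relations only ``up to normalization constants''.

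One genuine slip: you assert a \emph{unique} harmonic expansion $a=\sum_i a_i\overline R_i$, justified by an ``orthogonality'' $\overline R_i^*\overline R_j=\delta_{ij}d_{\rho_i}\oneop$. That relation fails for $i\neq j$: the product $\overline R_i^*\overline R_j$ is not an element of $\Hom_\C(\id,\id)$, so there is no reason for it to be a scalar, let alone zero; in the honest two-interval case the uniqueness instead comes from the minimal conditional expectation of \cite[Prop.\ 5]{KLM01}, which is not available in this abstract setting (and the paper deliberately does not require $\N\vee\M^c$ to split as a tensor product, which would be the natural source of such an expectation). The paper is careful on exactly this point: it claims only a \emph{not necessarily unique} expansion $b=\sum_i b_iR_i$, obtained from Frobenius reciprocity together with the normality hypothesis on charge transporters, and that suffices because the proposition asserts the Longo--Rehren relations among the generators, not uniqueness of coefficients. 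Dropping the uniqueness claim (which your argument never actually uses for the decisive step) leaves your proof intact and aligned with the paper's.
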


\begin{proof}
Consider the inclusion $\N\vee\M^c\subset (\N\vee\M^c)^{cc}$. Being
$\C_{\N^c}\cap\C_{\M}$ a UFTC we can arrange its irreducible sectors
$\{[\rho]\in\C_{\N^c}\cap\C_{\M}\}$ in a \emph{rational system}
$\{[\rho_i]\}_i$, in the terminology of \cite{KLM01}, see also
\cite{Reh90}, \cite{BEK99}. By assumption, for each $[\rho_i]$ we can
choose $\rhobar_i\in\C_{\N}\cap\C_{\M}$,
$\rho_i\in\C_{\N^c}\cap\C_{\M^c}$ and
$R_i\in\Hom_\C(\id,\rhobar_i\rho_i)$ such that $R_i^*R_i = d_{\rho_i}
\oneop$ and $R_0 = \oneop$. In particular, $R_i a = \rhobar_i\rho_i(a)
R_i$ for all $a\in\N\vee\M^c$ and $R_i\in(\N^c\cap\M)^c =
(\N\vee\M^c)^{cc}$.

Now, $R_i R_j \in \Hom_\C(\id,\rhobar_i\rho_i\rhobar_j\rho_j) = \Hom_\C(\id,\rhobar_i\rhobar_j\rho_i\rho_j)$ because, e.g., $\C_\N$ and $\C_{\N^c}$ commute in the sense of Proposition \ref{prop:cd}, and
$$R_iR_j = \sum_{k,\alpha,\beta} (w_\alpha w_\alpha^* \times v_\beta v_\beta^*) \cdot (R_i\times R_j)$$ 
where $k$ runs over irreducible components $[\rho_k]\prec[\rho_i][\rho_j]$ and $\alpha$, $\beta$ over orthonormal bases of isometries $w_\alpha\in\Hom_{\C_\N}(\rhobar_k,\rhobar_i\rhobar_j)$, $v_\beta\in\Hom_{\C_{\M^c}}(\rho_k,\rho_i\rho_j)$.
Then $\sum_{k,\alpha,\beta} w_\alpha w_\alpha^* \times v_\beta v_\beta^* \cdot R_i\times R_j = \sum_{k,\alpha,\beta} w_\alpha v_\beta\, \lambda_{\alpha,\beta}^k R_k$ where $\lambda_{\alpha,\beta}^k\in\CC$ because $[\rho_k]$ is irreducible, hence $[\id]\prec[\rhobar_k][\rho_k]$ with multiplicity one, and $\rhobar_k(v_\beta) = v_\beta$. Setting $C_{ij}^k := \sum_{\alpha,\beta} w_\alpha v_\beta\, \lambda_{\alpha,\beta}^k$ we have (non-canonical) intertwiners in $\Hom_\C(\rhobar_k\rho_k, \rhobar_i\rhobar_j\rho_i\rho_j) = \Hom_\C(\rhobar_k\rho_k, \rhobar_i\rho_i\rhobar_j\rho_j)$ which lie in $\N\vee\M^c$ and fulfill
$$R_iR_j = \sum_k C_{ij}^k R_k.$$
In particular, we have $C_{\overline i i}^0 \in \Hom_\C(\id, \rhobar_{\overline i}\rho_{\overline i}\rhobar_i\rho_i)$ again in $\N\vee\M^c$, hence
$R_{\overline i}^* C_{\overline i i}^0$ is a multiple of $R_i$, i.e., we get
$$R_i^* = \lambda C_{\overline i i}^{0*} R_{\overline i}$$
for some $\lambda\in\CC$, and we have shown up to normalization
constants the algebraic relations of \cite[Eq.\ (15)]{KLM01}.

On the other hand, by Frobenius reciprocity \cite[Lem.\ 2.1]{LoRo97} the $R_i$ generate the extension $\N\vee\M^c \subset (\N\vee\M^c)^{cc}$ because every unitary charge transporter $u\in\Hom_\C(\rho,\tilde\rho)$, $\rho\in\C_\N$, $\tilde\rho\in\C_{\M^c}$ such that $[\rho] = [\rho_i]$ for some $i$, can be written as $u = \lambda v \rho_i(r^*) R_{\overline i} = \lambda v r^* R_{\overline i}$ for suitable $\lambda\in\CC$, $v\in\M^c$ unitary and $r\in\N$ isometric. In particular, every $b\in(\N\vee\M^c)^{cc}$ admits a (not necessarily unique) \lq\lq harmonic'' expansion
\begin{equation}\label{eq:harmonicabstractexpansion}b = \sum_{i} b_i R_i\end{equation}
where $b_i\in\N\vee\M^c$, cf.\ \cite[Eq.\ (4.10)]{LoRe95}, \cite[Prop.\ 45]{KLM01}, and we are done.
\end{proof}

\begin{corollary}\label{cor:morethancomp}
With the assumptions of the previous proposition, $\N\vee\M^c$ is bidual in $\M_0$, i.e., $(\N\vee\M^c)^{\perp\perp} = \N\vee\M^c$. Moreover $\N\vee\M^c$ is normal in $\M_0$ if and only if $\C_{\N^c}\cap\C_{\M}\subset\catVec$, and $\N\vee\M^c = \M_0$ if and only if $\C_{\N^c}\cap\C_{\M} = \{\id\}$. Analogous statements hold interchanging $\N$ and $\M$, hence in particular $p\sim q$.
\end{corollary}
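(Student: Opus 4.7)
The plan is to leverage the Longo--Rehren structure from the preceding proposition: $\N\vee\M^c\subset(\N\vee\M^c)^{cc}$ has generators $R_i\in\Hom_\C(\id,\rhobar_i\rho_i)$ indexed by the sectors $[\rho_i]\in\Delta(\D)$ of the middle category $\D:=\C_{\N^c}\cap\C_\M$, and every $b\in(\N\vee\M^c)^{cc}$ admits a harmonic expansion $b=\sum_i b_i R_i$ with $b_i\in\N\vee\M^c$. I will use throughout that $\D$ has modular spectrum, by Lemma \ref{lem:modularityintcats}.

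For biduality $(\N\vee\M^c)^{\perp\perp}=\N\vee\M^c$, I would fix $b\in(\N\vee\M^c)^{\perp\perp}$, observe that $(\N\vee\M^c)^\perp=\D$ contains every $\Ad_u$ with $u\in\cU(\N^c\cap\M)$, and use these inner endomorphisms to deduce $b\in(\N^c\cap\M)^c=(\N\vee\M^c)^{cc}$, so a harmonic expansion for $b$ is available. Every $\rho\in\D$ is simultaneously trivial on $\N$ (from $\rho\in\C_{\N^c}$) and on $\M^c$ (from $\rho\in\C_\M$), hence on $\N\vee\M^c$, so $\rho(b)=\sum_i b_i\,\rho(R_i)$. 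Naturality plus the hexagon, together with $\eps_{\rhobar_i,\rho}=\oneop$ coming from axiom (iv) at $p$, collapse $\rho(R_i)$ to $\rhobar_i(\eps_{\rho_i,\rho})\,R_i$. From here I would replicate the Rehren ``killing ring'' of Proposition \ref{prop:dualoflocalcat} inside $\D$: conjugating with standard solutions $\overline R_{\tau_j}$ of the conjugate equations for sectors $[\tau_j]\in\Delta(\D)$ turns the invariance $\tau_j(b)=b$ into an $S$-matrix sum that annihilates every $i$ with $[\rho_i]\neq[\id]$, leaving $b=b_0\in\N\vee\M^c$. The passage from local to quasilocal $b$ is by norm continuity, verbatim as in Proposition \ref{prop:dualoflocalcat}.

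The two ``iff'' statements then follow from this Galois-type duality. Normality $(\N\vee\M^c)^{cc}=\N\vee\M^c$ is the triviality of the Longo--Rehren extension, which by the index formula $[(\N\vee\M^c)^{cc}:\N\vee\M^c]=\sum_i d_{\rho_i}^2$ is equivalent to $\D$ having only the vacuum sector, i.e.\ $\D\subset\catVec$. For $\N\vee\M^c=\M_0$, the faithful embedding $\C\hookrightarrow\End(\M_0)$ forces $\M_0^\perp=\{\id\}$, and combined with biduality one reads off the chain $\N\vee\M^c=\M_0\Leftrightarrow(\N\vee\M^c)^\perp=\M_0^\perp\Leftrightarrow\D=\{\id\}$. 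The analogous statements for $\M\vee\N^c$ come from the symmetry $\N\leftrightarrow\M$, $\N^c\leftrightarrow\M^c$, which preserves every hypothesis of the preceding proposition. Together with the normality of the charge-transporter algebras $\cU_{\C_{\N^c}\cap\C_\M}(\N,\M^c)$ and $\cU_{\C_{\M^c}\cap\C_\N}(\M,\N^c)$ already built into those hypotheses, this verifies both bullets of Definition \ref{def:comparapts} on the cross pairs $(\N,\M^c)$ and $(\M,\N^c)$, so $p\sim q$.

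The step I expect to require the most care is the $S$-matrix computation inside $\D$: Proposition \ref{prop:dualoflocalcat} uses the full modularity of $\DHR\{\A\}$, whereas here $\D$ is only modular modulo inner endomorphisms. My intended resolution is that inner sectors of $\D$ collapse to the vacuum sector of $\D/\catVec$ and therefore do not obstruct unitarity of the relevant $S$-matrix on the non-trivial sectors, which is exactly the input the killing-ring argument needs.
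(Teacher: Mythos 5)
Your handling of the core biduality claim is essentially the paper's own proof: the paper disposes of it with a one-line pointer back to Proposition \ref{prop:dualoflocalcat}, i.e.\ exactly your route --- place $b\in(\N\vee\M^c)^{\perp\perp}$ inside $(\N\vee\M^c)^{cc}$ via the inner endomorphisms $\Ad_u$, $u\in\cU(\N^c\cap\M)$, invoke the harmonic expansion (\ref{eq:harmonicabstractexpansion}), collapse $\rho(R_i)$ to $\rhobar_i(\eps_{\rho_i,\rho})R_i$ using $\eps_{\rhobar_i,\rho}=\oneop$ from axiom (iv), and run the killing ring over $\Delta(\C_{\N^c}\cap\C_\M)$ with the unitary $S$-matrix supplied by Lemma \ref{lem:modularityintcats}. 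Your resolution of the ``modular only modulo inner endomorphisms'' worry is correct: modular spectrum is precisely non-degeneracy on the sector set, which is all the killing ring needs. (The local-to-quasilocal limiting step you mention is vacuous here, since everything already lives in the von Neumann algebra $(\N\vee\M^c)^{cc}\subset\M_0$.) The converse of the normality equivalence, the last equivalence via biduality, and the conclusion $p\sim q$ also match the paper (which, like you, only addresses the cross pairs of Definition \ref{def:comparapts} explicitly).

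The one genuine weak point is the forward direction of the normality equivalence. You deduce ``$\N\vee\M^c$ normal $\Rightarrow\C_{\N^c}\cap\C_\M\subset\catVec$'' from the index formula $[(\N\vee\M^c)^{cc}:\N\vee\M^c]=\sum_i d_{\rho_i}^2$, but the preceding proposition does not establish that formula: it only shows that the generators $R_i$ satisfy the Longo--Rehren relations and that the harmonic expansion exists, explicitly \emph{not necessarily uniquely}, and without assuming that $\N$ and $\M^c$ commute or that $\N\vee\M^c$ splits. The extension could therefore a priori be a proper quotient of the abstract Longo--Rehren inclusion, and its index need not equal $\sum_i d_{\rho_i}^2$. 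The paper avoids this by recycling the argument of Proposition \ref{prop:holotensorsplit}: normality is equivalent to every charge transporter $u\in\cU_\C(\N,\M^c)$ lying in $\N\vee\M^c$, so every $\rho\in(\N\vee\M^c)^\perp$ fixes $u$, whence $\eps_{\rho,\sigma}=\eps_{\sigma,\rho}=\oneop$ for all $\sigma\in\C_\N\simeq\C$ and $\rho\in\catVec$ by modularity of the full category $\C$. If you prefer to stay within your framework, the repair is to note that normality puts each $R_i$ into $\N\vee\M^c$, hence $\rho(R_i)=R_i$ for all $\rho\in(\N\vee\M^c)^\perp$; combined with $\rho(R_i)=\rhobar_i(\eps_{\rho_i,\rho})R_i$ and the standard left inverse this forces $[\rho_i]$ to be degenerate in the intersection category, contradicting its modular spectrum unless $[\rho_i]=[\id]$.
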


\begin{proof}
The category $\C_{\N^c}\cap\C_{\M}$ is automatically
modular with the braiding inherited from $\C$, thanks to Lemma
\ref{lem:modularityintcats}. The first statement follows by the same
argument leading to Proposition \ref{prop:dualoflocalcat} which relies
on the (not necessarily unique) harmonic expansion
(\ref{eq:harmonicabstractexpansion}), on rigidity of
$\C_{\N^c}\cap\C_{\M}$ and on unitarity of its modular $S$-matrix.

Normality of $\N\vee\M^c$ implies $\C_{\N^c}\cap\C_{\M}\subset
\catVec$ as we have seen in Proposition \ref{prop:holotensorsplit},
the converse follows from the normality assumption on charge transporters.

The nontrivial implication in the last statement follows from biduality.
\end{proof}

\section{Abstract points and (Dedekind's) completeness}\label{sec:Dedekind}

In the following we show a way of deriving completeness of the
invariant introduced in Section \ref{sec:braction}, Eq.\
(\ref{eq:resfun}), on the class of \emph{prime} conformal nets. This
section is rather speculative, in the sense that it relies
on two assumptions on the \lq\lq good behaviour" of abstract point (in
the prime CFT case). The first is horizontal and concerns
\emph{transitivity} of the comparability relation $p\sim q$, the
second is vertical and asks \emph{totality} of the unitary equivalence
$p=UqU^*$ encountered in Section \ref{sec:fuzzyapts}. Here we do not
discuss about the issue of deriving them, nor strengthening Definition
\ref{def:apts} or \ref{def:comparapts} in order to do so, 
nor deciding how do they constrain models. We just
show how the structure of the real line (Dedekind's completeness
axiom) and of a conformal net can cooperate in the reconstruction of
the latter up to isomorphism from its abstract points,
thanks to Proposition \ref{prop:totalorderapts}. 

\begin{proposition}\label{prop:ptsandapts}
Let $\{\A\}$ be a prime conformal net on the line (Definition \ref{def:primeconfnet}), fix arbitrarily $I_0\in\cI$ and assume in addition that comparability $p\sim q$ is transitive, and unitary equivalence $p=UqU^*$ is total on the abstract points of $\M_0 = \A(I_0)$. Then $\{\A\}$ is uniquely determined up to isomorphism by its abstract points inside $\M_0$.
\end{proposition}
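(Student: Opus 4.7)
The plan is to reduce to Proposition \ref{prop:michiinvariant}: it suffices to recover, up to spatial isomorphism inside $\M_0 = \A(I_0)$, the family $\{\A(I) \subset \M_0 : I \in \cI,\, I \subset I_0\}$ of local subfactors from the collection of abstract points of $\M_0$. By strong additivity, every such subfactor is an intersection
$$\A((a,b)) = \A((a,d)) \cap \A((c,b)), \qquad I_0 = (c,d),\; c<a<b<d,$$
of two half-interval algebras, and the half-interval algebras are precisely the relative complements of honest points of $\M_0$ as in Remark \ref{rmk:hpts}. So the task reduces to showing that, up to a further spatial transformation of $\M_0$, honest points of $\{\A\}$ can be matched with honest points of any other prime conformal net $\{\B\}$ whose braided action of $\C$ on $\B(I_0)$ is isomorphic to that of $\{\A\}$.

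Transitivity of $\sim$ together with Proposition \ref{prop:totalorderapts} upgrades the full collection $\Pi$ of abstract points of $\M_0$ to a linearly ordered set $(\Pi, <)$ in which the honest points of $\{\A\}$ form a sub-order $\Pi_0 \cong I_0$ compatible with the geometric ordering of $I_0$. The totality of unitary equivalence makes $\Pi$ a single orbit under the conjugation action of unitaries implementing UBTC autoequivalences of $\C$ on $\M_0$, in the spirit of Section \ref{sec:fuzzyapts}. Combined with Dedekind completeness of $I_0 \cong (0,1)$, this should intrinsically recognize the continuum $\Pi_0$: individual honest points are indistinguishable from their fuzzy deformations inside the orbit, but the \emph{ordered continuum} they form is singled out as a maximal densely ordered Dedekind-complete suborder of $\Pi$.

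Let $\{\B\}$ be another prime conformal net with isomorphic braided action of $\C$ on $\B(I_0)$, implemented by a spatial $\Ad_W: \A(I_0) \to \B(I_0)$ as in Definition \ref{def:bractionisom}. Being braided, $\Ad_W$ bijects abstract points of the two sides respecting $<$ and unitary equivalence, so $\Ad_W(\Pi_0^{\A})$ is an ordered copy of $I_0$ inside $\Pi^{\B}$. By totality of unitary equivalence inside $\{\B\}$, a further unitary $V \in \cU(\B(I_0))$ implementing a UBTC autoequivalence of $\C$ moves $\Ad_W(\Pi_0^{\A})$ onto $\Pi_0^{\B}$ compatibly with the order; the composite $\Ad_{VW}$ then sends every half-interval algebra of an honest point of $\{\A\}$ to the corresponding half-interval algebra of $\{\B\}$. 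By the intersection formula of the first paragraph, all local subfactors $\A(I) \subset \A(I_0)$ match $\B(I) \subset \B(I_0)$, and Proposition \ref{prop:michiinvariant} yields $\{\A\} \cong \{\B\}$ as local conformal nets.

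The main obstacle I foresee is the \emph{coherent} construction of $V$: the hypotheses provide, pointwise, a unitary equivalence of each abstract point in $\Ad_W(\Pi_0^{\A})$ with some honest point of $\{\B\}$, but one needs a single unitary realising the order-isomorphism of the two continua simultaneously. I expect this to be achievable by fixing the identification on a dense dyadic subset of honest points, where individual unitary adjustments of the type in Examples \ref{ex:fatpts} and \ref{ex:fuzzypts} can be successively composed, and then extending the assignment by order-continuity using Dedekind completeness of $I_0$. This is the step where the two auxiliary hypotheses of the statement genuinely combine.
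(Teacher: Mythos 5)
There is a genuine gap, and it is exactly the one you flag at the end: your argument needs a \emph{single} unitary $V$ realising an order-isomorphism between two continua of abstract points simultaneously, and the proposed construction (successively composing adjustments over a dense dyadic set and then ``extending by order-continuity'') does not produce a unitary operator --- infinite products of unitaries have no reason to converge, and there is no order-continuity extension principle for implementing unitaries. A second, related problem occurs earlier: transitivity of $\sim$ together with Proposition \ref{prop:totalorderapts} does \emph{not} make the full collection $\Pi$ of abstract points linearly ordered; it only makes each comparability \emph{class} linearly ordered. A generic fuzzy point of Section \ref{sec:fuzzyapts} is not comparable to the honest points at all --- indeed, any abstract point comparable to \emph{all} honest points is forced to be honest, which is precisely the content of the first half of the paper's proof and is missing from yours. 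Consequently your characterisation of $\Pi_0$ as ``a maximal densely ordered Dedekind-complete suborder of $\Pi$'' is neither justified nor needed.

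The paper's proof avoids the coherence problem entirely and only ever uses \emph{one} unitary. Step one: for an honest point $p$, every $q\sim p$ is comparable (by transitivity) to \emph{every} honest point $r$, hence by Proposition \ref{prop:totalorderapts} either $r\le q$ or $q<r$; taking the von Neumann algebra generated by the left complements of the first family and the intersection over the second, additivity of the net and Dedekind completeness of $\RR$ force these two honest candidates to coincide with $q$, so the comparability class of $p$ is \emph{exactly} the set of honest points. Step two: given an arbitrary abstract point $s$, totality provides a single unitary $U$ with $s=UpU^*$; then $s$ is an honest point of the conjugated net $\{\tilde\A\}=\{U\A(\cdot)U^*\}$, which is trivially isomorphic to $\{\A\}$, and by step one (comparability being invariant under isomorphism of nets) the comparability class of $s$ recovers all honest points of $\{\tilde\A\}$, hence all local subfactors $\tilde\A(I)\subset\tilde\A(I_0)$ by intersections of relative complements. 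Proposition \ref{prop:michiinvariant} then determines $\{\tilde\A\}\cong\{\A\}$ up to isomorphism. Your reduction to Weiner's theorem via intersections of half-interval algebras is the right endgame, but without step one you cannot identify which relative complements are half-interval algebras, and without the paper's ``conjugate the whole net by the one unitary $U$'' trick you are left with the unresolved matching problem.
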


\begin{proof}
Take first an honest abstract point $p=(\A(I_1),\A(I_2))$ of $\M_0$
with respect to $\{\A\}$, as in Remark \ref{rmk:hpts}. By Remark
\ref{rmk:honestarecomp} all the other honest points are equivalent to
$p$. We want to show that they exhaust the comparability equivalence
class. Let $q=(\N,\N^c)$ be an abstract point of $\M_0$ such that
$q\sim p$, hence by transitivity $q\sim r$ for
every honest point $r = (\A(J_1),\A(J_2))$, and by Proposition \ref{prop:totalorderapts} either $r \leq q$ or $q < r$. Consider the maximum over the first family, i.e., the von Neumann algebra generated by the left relative complements, and the minimum over the second, i.e., the intersection of the left relative complements. The resulting algebras are again honest points because the net is additive and they coincide because the real line is Dedekind complete, thus $q$ is also honest with respect to $\{\A\}$.

Now take an arbitrary abstract point $s=(\M,\M^c)$ of $\M_0$. By the
totality assumption there is a unitary $U\in\cU(\Hilb)$ such that $s =
UpU^*$ where $p=(\A(I_1),\A(I_2))$ as above. Now every unitary is
eligible as an isomorphism of local conformal nets, because positivity
of the energy is preserved by unitary conjugation, hence call
$\{\tilde \A\}$ the net defined on algebras by $\tilde \A(I) :=
U\A(I)U^*$, $I\in\cI$, and observe that
$s=(\tilde\A(I_1),\tilde\A(I_2))$ is an honest point of $\tilde
\A(I_0) = \A(I_0)$ with respect to the new net. As before, $r$
determines all the other honest points (because the comparability
relation and its transitivity property are invariant under
isomorphisms of nets), hence all the local interval algebras 
$\tilde \A(I) \subset \tilde \A(I_0)$, $I\subset I_0$ by taking intersections. 
By Proposition \ref{prop:michiinvariant} the latter determine $\{\tilde \A\}$ 
up to isomorphism, hence $\{\A\}$ as well, and the proof is complete.
\end{proof}

\section{Conclusions}

In chiral conformal QFT, the DHR category $\C=\DHR\{\A\}$
is a unitary braided tensor category corresponding to the positive-energy
representations of the model. In completely rational models, the
braiding is non-degenerate, hence it is a modular tensor category (UMTC). 
While abstract UMTCs are rigid structures and cannot distinguish
the underlying CFT model uniquely, we have studied the question to which
extent the \emph{braided action} of this category on a single (local or global)
algebra $\A$ is a complete invariant of the model. The strategy is to
exploit the trivialization of the braiding, which is a characteristic 
feature of the DHR braiding, in certain geometric
constellations to identify pairs of subalgebras (called ``abstract
points''). They are candidates for subalgebras of local observables
associated to regions (half-intervals or half-lines)
separated by a geometric point. Modularity is needed to distinguish the
left from the right complement, and enters in our analysis through the stronger 
categorical notion of primality for UMTCs. As the main tool in this
direction, we 
established powerful duality relations between subalgebras of $\A$ and
subcategories of $\C$, and a characterization of ``prime'' CFT models
that do not factor through nontrivial subnet, either holomorphic or
not. We formulate a 
unitary equivalence relation and a comparability relation between
abstract points. Assuming that the former is total and the latter is
transitive, we showed that the action of the DHR category is a
complete invariant for prime CFT models, i.e., it allows (in
principle) to reconstruct the local QFT up to unitary equivalence.  

We assumed throughout that the action does come from a
CFT, so that we only have to decide whether two inequivalent CFT can
give rise to the same action. We did not address the more ambitious
question of how to characterize those actions which possibly come from
a CFT, thus leaving the realization problem
of braided actions of abstract UMTCs by DHR categories of some local
net for future research.

\bigskip
\bigskip

{\bf Acknowledgement.} Supported by the German Research Foundation (Deutsche
Forschungsgemeinschaft (DFG)) through the Institutional Strategy of
the University of G\"ottingen. We thank M. Bischoff and R. Longo for drawing
our attention to \cite{Wei11}, which is crucial for Proposition
\ref{prop:ptsandapts}, and to \cite{HaYa00}, which puts our work in a
broader context. We also thank them for their stimulating interest in
this work. We are indebted to Y. Tanimoto for his suggestions, for a
careful proof-reading of an earlier version of this 
manuscript and for pointing out a mistake in our first proof of Proposition
\ref{prop:cd}. We also thank D. Buchholz and R. Conti for motivating conversations.

\small


\end{document}